\theoremstyle{plain}
\newtheorem{thm}{Theorem}[section]
\newtheorem{theorem}[thm]{Theorem}
\newtheorem{lemma}[thm]{Lemma}
\newtheorem{corollary}[thm]{Corollary}
\newtheorem{proposition}[thm]{Proposition}
\theoremstyle{definition}
\newtheorem{remark}[thm]{Remark}
\newtheorem{definition}[thm]{Definition}
\newtheorem{example}[thm]{Example}
\newtheorem{question}[thm]{Question}
\newtheorem{setup}[thm]{Set-up}
\numberwithin{equation}{section}
\newcommand{\sO}{{\mathcal O}}
\newcommand{\C}{{\mathbb C}}
\newcommand{\BP}{{\mathbb P}}
\newcommand{\Q}{{\mathbb Q}}
\newcommand{\R}{{\mathbb R}}
\newcommand{\Z}{{\mathbb Z}}
\title [Primitive automorphisms]{Pisot units, Salem numbers and higher dimensional projective manifolds with primitive automorphisms of positive entropy}
\author{Keiji Oguiso}
\address{Mathematical Sciences, the University of Tokyo, Meguro Komaba 3-8-1, Tokyo, Japan and Korea Institute for Advanced Study, Hoegiro 87, Seoul, 
133-722, Korea}
\email{oguiso@ms.u-tokyo.ac.jp}
\thanks{The author is supported by JSPS Grant-in-Aid (S) No 25220701, JSPS Grant-in-Aid (S) 15H05738, JSPS Grant-in-Aid (A) 16H02141, JSPS Grant-in-Aid (B) 15H03611, and by KIAS Scholar Program.}
\begin{document}

\maketitle

\begin{abstract} We show that, in any dimension greater than one, there are an abelian variety, a smooth rational variety and a Calabi-Yau manifold, with primitive birational automorphisms of first dynamical degree $>1$. We also show that there are smooth complex projective Calabi-Yau manifolds and smooth rational manifolds, of any even dimension, with primitive biregular automorphisms of positive topological entropy.
\end{abstract}

\section{Introduction}

Throughout this note, we work in the category of projective varieties defined over $\C$. 

The aim of this note is to give some affirmative answers (Theorems \ref{thm2}) to the following question \ref{ques1} (1) asked in \cite[Problem 1.1]{Og15} and a weaker version (2) in any dimension, for abelian varieties, smooth rational varieties and Calabi-Yau manifolds:

\begin{question}\label{ques1} 
\begin{enumerate} 
\item For each integer $\ell \ge 2$, is there a smooth projective variety of dimension $\ell$ with a primitive biregular automorphism of positive topological entropy? 
\item For each integer $\ell \ge 2$, is there a smooth projective variety of dimension $\ell$ with a primitive birational automorphism of first dynamical degree $> 1$?
\end{enumerate}
\end{question}

Question \ref{ques1} is related to birational geometry, 
complex dynamics, and also number theory and representation theory as we shall see (cf. Section 3). We recall the complex dynamical notion of topological entropy and closely related notions of dynamical degrees in Section 2, following \cite{Bo73}, \cite{Gr03}, \cite{Yo87}, \cite{DS05}, \cite{DN11} and \cite{Tr15}. Here we just recall that if $f \in {\rm Aut}\, (M)$, then $f$ is of positive entropy $h_{\rm top}(f) > 0$ if and only if the first dynamical degree $d_1(f) > 1$. The notion of primitivity of automorphism, introduced by De-Qi Zhang \cite{Zh09}, is purely algebro-geometric:

\begin{definition}\label{def1}
Let $M$ be a smooth projective variety of dimension $\ell \ge 2$ and $f \in {\rm Bir}\, (M)$. $f$ is called {\it imprimitive} if there are a dominant rational map $\pi : M \dasharrow B$ with connected fibers to a smooth projective variety $B$ with $0 < \dim\, B < \dim\, M$, and a rational map $f_B : B \dasharrow B$, necessarily $f_B \in {\rm Bir}\, (B)$, such that $\pi \circ f = f_B \circ \pi$. Here smoothness assumption is harmless by resolution of singularities \cite{Hi64}, as we work over $\C$. $f$ is {\it primitive} if it is not imprimitive.
\end{definition}

In the definition, $\pi$ and $f_B$ are not regular in general even if $f \in {\rm Aut}\, (M)$. If $M$ is a curve, i.e., if $\ell = 1$, then any automorphism of $M$ is primitive and of null topological entropy. So, from now, we assume that $\ell \ge 2$. A primitive birational automorphism is in some sense an {\it irreducible} automorphism in birational geometry. Compare with the following trivial (but related, see the proof of Theorem \ref{thm81}): 

\begin{remark}\label{rem1} Let $k$ be a field and $V$ a $k$-vector space of $\dim\, V = \ell \ge 1$. Let $f \in {\rm GL}\,(V)$. Then $f$ is not irreducible in the usual sense if and only if there are a $k$-vector space $W$ with $0 < \dim\, W < \dim\, V$, a surjective linear map $\pi : V \to W$ and $f_W \in {\rm GL}\, (W)$ such that $\pi \circ f = f_W \circ \pi$. The existence of irreducible $(V, f)$ with $\dim\, V = \ell$ depends on the field $k$. For instance $\ell = 1$ if $k = \C$, $\ell = 1$, $2$ if $k = \R$, while $\ell$ is arbitrary if $k = \Q$. 
\end{remark}

The existence of a primitive automorphism also depends on the classes of smooth projective varieties. Indeed, as again observed by De-Qi Zhang \cite{Zh09} (see also \cite[Theorem 2.2]{Og15}, \cite[Lemma 3.2, Theorem 3.3]{Og16} for the formulation here), we have the following rather strong constraint on projective varieties admitting primitive birational automorphisms: 

\begin{theorem}\label{thm1} Let $M$ be a smooth projective variety of dimension $\ell \ge 2$. We denote the Kodaira dimension of $M$ by $\kappa(M)$ and the irregularity of $M$ by $q(M)$. Then, if $M$ has a primitive birational automorphism $f \in {\rm Bir}\, (M)$, then ${\rm ord}\, f = \infty$ and $M$ falls into one of the following three exclusive classes:

(WRC) $\kappa(M) = -\infty$ and $q(M) = 0$;

(WCY) $\kappa(M) = 0$ and $q(M) = 0$;

(A) $M$ is birational to an abelian variety.

Assume in addition that the following two statements hold for $M$:

(1) If $\kappa(M) = 0$ and $h^1({\mathcal O}_M) = 0$, then $M$ is birational to a minimal Calabi-Yau variety $M'$, i.e., a normal projective variety $M'$ with only $\Q$-factorial terminal singularities such that ${\mathcal O}_{M'}(mK_{M'}) \simeq {\mathcal O}_{M'}$ for some $m >0$; and 

(2) If $\kappa(M) = -\infty$, then $M$ is uniruled. 

Then, $M$ falls into one of the following three exclusive classes:

(RC) $M$ is a rationally connected manifold, i.e., a smooth projective variety whose two general closed points are connected by a rational curve on $M$;

(CY) $M$ is birational to a minimal Calabi-Yau variety; or

(A) $M$ is birational to an abelian variety. 

\end{theorem}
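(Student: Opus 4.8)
The plan is to prove Theorem \ref{thm1} by combining the structure theory of imprimitivity with standard results from birational classification. First I would establish the trichotomy (WRC)/(WCY)/(A). Suppose $f \in {\rm Bir}\,(M)$ is primitive. The Kodaira dimension $\kappa(M)$ is a birational invariant, and the Iitaka fibration $\pi_{\rm Iit} : M \dasharrow M_{\rm can}$ of a manifold with $0 < \kappa(M) < \dim M$ is functorial: any $g \in {\rm Bir}\,(M)$ descends to $g_{M_{\rm can}} \in {\rm Bir}\,(M_{\rm can})$ with $\pi_{\rm Iit} \circ g = g_{M_{\rm can}} \circ \pi_{\rm Iit}$. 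Since $0 < \dim M_{\rm can} < \dim M$, this would make $f$ imprimitive; hence $\kappa(M) \in \{-\infty, 0, \dim M\}$. The case $\kappa(M) = \dim M$ (general type) is excluded because then ${\rm Bir}\,(M)$ is a finite group (so no element of infinite order exists, and we will separately show ${\rm ord}\, f = \infty$), leaving $\kappa(M) \in \{-\infty, 0\}$. Next, the Albanese map $\alpha : M \to {\rm Alb}\,(M)$ is likewise functorial under ${\rm Bir}\,(M)$ (any birational self-map acts on $H^0(M, \Omega^1_M)$ and hence on ${\rm Alb}\,(M)$, commuting with $\alpha$); if $0 < q(M) < \dim M$ then the Stein factorization of $\alpha$ onto its image gives an $f$-equivariant fibration with base of intermediate dimension, contradicting primitivity. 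If $q(M) = \dim M$, then $\alpha$ is birational and $M$ is birational to an abelian variety, which is case (A). Otherwise $q(M) = 0$, giving (WRC) when $\kappa(M) = -\infty$ and (WCY) when $\kappa(M) = 0$; these three classes are visibly mutually exclusive.

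For the claim ${\rm ord}\, f = \infty$: if $f$ had finite order $n$, consider the quotient or, more robustly, an $f$-equivariant resolution $M' \to M$ on which $\langle f \rangle$ acts biregularly, and take $B = M'/\langle f \rangle$ together with $\pi : M' \to B$ the quotient map and $f_B = {\rm id}_B$. Here one must check $0 < \dim B < \dim M$: the upper bound $\dim B = \dim M$ is automatic, and $\dim B > 0$ unless $\dim M = 0$, which is excluded since $\ell \ge 2$. Thus any finite-order $f$ is imprimitive, so primitivity forces ${\rm ord}\, f = \infty$. One subtlety to treat carefully is that $\pi$ must have connected fibers with $B$ smooth projective; the quotient of a smooth projective variety by a finite group is projective with at worst quotient (hence klt, but not necessarily smooth) singularities, so I would invoke resolution of singularities \cite{Hi64} and replace $B$ by a resolution, noting that Definition \ref{def1} explicitly permits this.

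Finally, for the refined trichotomy (RC)/(CY)/(A) under hypotheses (1) and (2): in class (WRC) we have $\kappa(M) = -\infty$, $q(M) = 0$; hypothesis (2) says $M$ is uniruled, and then the maximal rationally connected (MRC) fibration $\varphi : M \dasharrow Z$ of Graber--Harris--Starr is functorial under ${\rm Bir}\,(M)$, so $f$ descends to $f_Z$. If $0 < \dim Z < \dim M$ this contradicts primitivity; if $\dim Z = \dim M$ then $M$ is not uniruled, contradicting (2); hence $\dim Z = 0$, i.e., $M$ is rationally connected, which is (RC). In class (WCY) we have $\kappa(M) = 0$, $h^1(\mathcal O_M) = q(M) = 0$, so hypothesis (1) directly gives that $M$ is birational to a minimal Calabi--Yau variety, which is (CY). Class (A) is unchanged. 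I expect the main obstacle to be the bookkeeping around functoriality of the various natural fibrations (Iitaka, Albanese, MRC) for \emph{birational} rather than biregular self-maps --- one has to pass to suitable birational models on which everything is a morphism and track that the induced map on the base is well-defined and compatible --- together with the delicacy, already flagged in Definition \ref{def1}, of arranging the base $B$ to be smooth via resolution without destroying the equivariance; the Kodaira-dimension and irregularity dichotomies themselves are then routine once this machinery is in place.
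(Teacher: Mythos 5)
Your overall strategy (Iitaka fibration, Albanese, MRC fibration, all made $f$-equivariant) is the standard route to this theorem --- the paper itself does not reprove it but cites Zhang and \cite{Og15}, \cite{Og16} --- and your treatment of the second trichotomy under hypotheses (1) and (2) is correct and matches the argument the paper uses elsewhere (GHS non-uniruledness of the MRC base plus primitivity). However, two of your steps for the first trichotomy have genuine gaps. First, the proof that ${\rm ord}\, f = \infty$ does not work as written: if $f$ has finite order and you set $B = M'/\langle f \rangle$ with $\pi$ the quotient map, then $\pi$ is generically finite, so $\dim B = \dim M$, and Definition \ref{def1} requires the strict inequality $\dim B < \dim M$; your parenthetical check of ``$0 < \dim B < \dim M$'' glosses over exactly the inequality that fails. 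The construction needs one more step: compose the quotient map with a non-constant map from $M'/\langle f \rangle$ to a lower-dimensional variety (e.g.\ a general pencil of hyperplane sections, Stein factorized), which yields an $f$-equivariant map to a base of dimension strictly between $0$ and $\dim M$ with $f_B = {\rm id}$. Note that this repair matters: your exclusion of the general-type case also leans on ``finite order $\Rightarrow$ imprimitive''.

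Second, the Albanese step is organized around $q(M)$ when it must be organized around $a := \dim \alpha(M)$, the dimension of the Albanese image. Your assertion ``if $q(M) = \dim M$ then $\alpha$ is birational'' is false in general: $C \times \BP^1$ with $C$ of genus $2$ has $q = 2 = \dim$, $\kappa = -\infty$, and Albanese image a curve, so $\alpha$ is neither birational nor generically finite; your case analysis would wrongly place such an $M$ in class (A) instead of deriving imprimitivity from the fibration over the Albanese image. Moreover the case $q(M) > \dim M$ is not addressed at all. The correct dichotomy is: if $0 < a < \dim M$, the Stein factorization of $M \to \alpha(M)$ contradicts primitivity; if $a = 0$ then $q(M) = 0$; if $a = \dim M$ then $\alpha$ is generically finite onto a subvariety of an abelian variety, so $\kappa(M) \ge 0$ (Ueno), hence $\kappa(M) = 0$ by the Iitaka step, and only then does Kawamata's theorem ($\kappa = 0$ implies the Albanese map is surjective with connected fibers) give that $\alpha$ is birational, i.e.\ case (A). As written, your proof neither invokes Kawamata's theorem nor closes the $q(M) \ge \dim M$, $\kappa(M) = -\infty$ cases, so the first trichotomy is not established.
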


\begin{remark}\label{rem2} The assumptions (1) and (2) hold if the minimal model problem and the abundance conjecture are affirmative in dimension $\ell$, so that they hold if $\ell \le 3$ by the minimal model theory and abundance theorem for projective threefolds due to Kawamata, Miyaoka, Mori and Reid (\cite{Mo88}, \cite{Ka92}, see also \cite{KMM87}, \cite{KM98}). 
\end{remark}

\begin{remark}\label{rem3} Obviously, smooth rational/unirational varieties are rationally connected. The most important classes of smooth projective varieties in (CY) or (WCY) are {\it Calabi-Yau manifolds} and projective {\it hyperk\"ahler manifolds} (see eg. \cite{GHJ03} for these manifolds). Recall that an $\ell$-dimensional simply-connected smooth projective variety $M$ is a Calabi-Yau manifold (resp. hyperk\"ahler manifold) if $H^0(\Omega_M^j) = 0$ for $0 < j < \ell$ and $H^0(\Omega_M^{\ell}) = \C\omega_M$ for a nowhere vanishing regular $\ell$-form $\omega_M$ (resp. $H^0(\Omega_M^{2}) = \C \eta_M$ for an everywhere non-degenerate regular $2$-form $\eta_M$, and therefore $\ell$ is even). 
\end{remark}

When $\ell = 2$, $3$, there are smooth projective rational varieties, Calabi-Yau manifolds and abelian varieties, with primitive bireguler automorphisms of positive entropy (See eg. \cite{Ca99}, \cite{BK09}, \cite{BK12}, \cite{Mc07}, \cite{Mc16}, \cite{Re12}, \cite{CO15}, \cite{Do16} for surfaces in several classes and \cite{OT14}, \cite{OT15} for threefolds).

Our main result is the following: 

\begin{theorem}\label{thm2} 
\begin{enumerate}
\item  For each $\ell \ge 2$, there is an $\ell$-dimensional abelian variety $A$ with a primitive biregular automorphism $f \in {\rm Aut}\, (A)$ of positive topological entropy. There is also an $\ell$-dimensional smooth projective variety $M$, birational to a minimal Calabi-Yau variety, with a primitive biregular automorphism of positive topological entropy. 
\item For each $\ell \ge 2$, there is a primitive birational automorphism $f \in {\rm Bir}\, (\BP^{\ell})$ of first dynamical degree $d_1(f) > 1$. 
\item For each $n \ge 2$, there is a $2n$-dimensional smooth rational variety $M$ with a primitive biregular automorphism $f \in {\rm Aut}\, (M)$ of positive topological entropy. 
\item For each $\ell \ge 2$, there is an $\ell$-dimensional smooth Calabi-Yau manifold $M$ with a primitive birational automorphism $f \in {\rm Bir}\, (M)$ of first dynamical degree $d_1(f) > 1$. 
\item For each $n \ge 2$, there is a $2n$-dimensional smooth Calabi-Yau manifold $M$ with a primitive biregular automorphism $f \in {\rm Aut}\, (M)$ of positive topological entropy.
\end{enumerate} 
\end{theorem}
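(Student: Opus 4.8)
The plan is to construct every example from one of two kinds of \emph{arithmetic automorphism} — multiplication by a Pisot unit on a CM abelian variety, and an automorphism acting on a hyperbolic lattice with spectral radius a Salem number — and then to pass to quotients, resolutions, or Hilbert schemes.

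\textbf{Part (1).} For each $\ell\ge 2$ I would fix a totally real field $F$ of degree $\ell$ and an imaginary quadratic extension $K=F(\sqrt{-d})$, so that $K$ is a CM field of degree $2\ell$, chosen (by a suitable choice of $F$ and $d$) so that the associated CM type is primitive; then the CM abelian variety $A=(K\otimes_{\Q}\R)/\hol_{K}$ is \emph{simple} of dimension $\ell$. Since $F$ is totally real of degree $\ell\ge 2$, its Dirichlet unit lattice inside the trace-zero hyperplane of $\R^{\ell}$ meets the nonempty open cone $\{x_{1}>0,\ x_{2},\dots,x_{\ell}<0\}$, which produces a \emph{Pisot unit} $\lambda_{0}\in\hol_{F}^{\times}$, i.e., a unit with one conjugate $\lambda_{0}>1$ and all others of absolute value $<1$; in particular $\lambda_{0}$ has infinite order. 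Multiplication by $\lambda_{0}$ preserves $\hol_{K}$ and is $K$-linear, hence defines $f\in{\rm Aut}(A)$ with ${\rm ord}\,f=\infty$. As $A$ is simple it has no nontrivial proper subtorus, so $f$ preserves none; and for a group automorphism of an abelian variety, imprimitivity forces an invariant proper nonzero subtorus (an equivariant dominant rational map $A\to B$ with $0<\dim B<\dim A$ induces, through the Albanese of $B$, an equivariant surjection of $A$ onto a lower-dimensional abelian variety). Hence $f$ is primitive. The eigenvalues of $f^{*}$ on $H^{1,0}(A)$ are conjugates of $\lambda_{0}$ — in particular $\lambda_{0}$ occurs — so on $H^{1,1}(A)\cong H^{1,0}\otimes H^{0,1}$ the spectral radius of $f^{*}$ is $\lambda_{0}^{2}>1$; thus $d_{1}(f)=\lambda_{0}^{2}>1$. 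For the Calabi-Yau statement in (1): the involution $-1$ commutes with $f$ and descends it to $A/\langle-1\rangle$; for $\ell\ge 3$ the $2^{2\ell}$ fixed points contribute only $\Q$-factorial terminal singularities of type $\frac{1}{2}(1,\dots,1)$ and $mK_{A/\langle-1\rangle}\sim 0$, so $A/\langle-1\rangle$ is already a minimal Calabi-Yau variety carrying $f$ biregularly, while for $\ell=2$ one passes to its minimal resolution, the Kummer K3 surface. In all cases $d_{1}$ is unchanged (exceptional classes are permuted) and primitivity descends, since an equivariant fibration downstairs pulls back, after Stein factorisation, to one on $A$.

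\textbf{Parts (2) and (3).} For $\BP^{\ell}$ I would take the monomial (toric) birational self-map $\varphi_{M}$ attached to a matrix $M\in{\rm GL}_{\ell}(\Z)$ having a dominant real eigenvalue $>1$ and no proper nonzero $M$-invariant rational subspace. By the theory of dynamical degrees of monomial maps, $d_{1}(\varphi_{M})$ equals the spectral radius of $M$, hence exceeds $1$; and any equivariant fibration would, after an equivariant toric resolution, yield an $M$-invariant rational subspace, so $\varphi_{M}$ is primitive. For part (3) I would start from a smooth rational surface $S$ with a primitive automorphism $\sigma\in{\rm Aut}(S)$ of positive entropy (for instance one of the automorphisms of blow-ups of $\BP^{2}$ at points constructed in \cite{Mc07} or \cite{BK09}, which fix no curve); the Hilbert scheme $S^{[n]}$ of $n$ points is then a smooth rational variety of dimension $2n$ — it is birational to the $n$-th symmetric power $S^{(n)}$, which is rational since $S$ is — and $\sigma^{[n]}\in{\rm Aut}(S^{[n]})$ has positive entropy. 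Primitivity of $\sigma^{[n]}$ would follow from that of $\sigma$: an equivariant fibration on $S^{[n]}$ would reflect invariant structure on $S$ incompatible with $\sigma$ having no periodic curve.

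\textbf{Parts (4) and (5).} For Calabi-Yau $\ell$-folds I would use a Wehler-type hypersurface: a general $X\subset(\BP^{1})^{\ell+1}$ of multidegree $(2,\dots,2)$, which is a smooth Calabi-Yau manifold by adjunction and Lefschetz, together with the $\ell+1$ covering involutions $\iota_{1},\dots,\iota_{\ell+1}$ of its projections $X\to(\BP^{1})^{\ell}$. A suitably chosen word $f$ in the $\iota_{i}$ acts on the hyperbolic lattice ${\rm NS}(X)$, of signature $(1,\ell)$, with an eigenvalue $\lambda>1$ (a Salem number in general), whence $d_{1}(f)>1$. The $f^{*}$-invariant rational lines in ${\rm NS}(X)$ can only be the eigenlines for $\pm 1$; these lie in the negative-definite orthogonal complement of the hyperbolic plane spanned by the eigenvectors for $\lambda$ and $\lambda^{-1}$, hence are anisotropic and cannot be the class of a fibration, so $f$ is primitive. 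This gives part (4) (one passes to a birational model should the chosen word be only birational), and in even dimension $\ell=2n$, $f$ is a biregular automorphism of $X$, which is part (5).

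\textbf{Main obstacle.} In every case the crux is \emph{primitivity}: one must exclude \emph{all} equivariant fibrations, which I would handle uniformly by reducing to an irreducibility statement for a linear or lattice action — simplicity of $A$ in part (1), irreducibility of $M$ in part (2), absence of $\sigma$-periodic curves in part (3), anisotropy of the only rational invariant subspaces in parts (4)--(5). The second delicate point, for the biregular statements in even dimension (parts (3) and (5)), is to realise the automorphism regularly on a genuinely smooth model while keeping it primitive, and it is here that working in dimension $2n$ is used in an essential way.
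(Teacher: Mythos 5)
Your proposal contains one outright error and several primitivity arguments that do not close. The error is in part (5): for a Wehler hypersurface $M \subset (\BP^1)^{\ell+1}$ of dimension $\ell \ge 3$ one has ${\rm Aut}\,(M) = \{id\}$ (Theorem \ref{thm82} (1)); the covering involutions $\iota_k$ are genuinely non-biregular birational maps, so no word in them is ever a biregular automorphism, in any parity of $\ell$. The Wehler construction can therefore only yield part (4), and the paper obtains the biregular even-dimensional Calabi--Yau examples of part (5) from a completely different source: the degree-two universal cover of the Hilbert scheme $W^{[n]}$ of an Enriques surface $W$ carrying a positive-entropy automorphism, with primitivity pulled back from the hyperk\"ahler manifold $S^{[n]}$ of the K3 cover $S$ via Bianco's theorem (Theorem \ref{thm4}). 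Relatedly, your part (3) has no real primitivity argument: $S^{[n]}$ for a rational surface $S$ is not hyperk\"ahler, and an equivariant fibration of $S^{[n]}$ need not ``reflect invariant structure on $S$''; the paper instead starts from a K3 surface with a Salem automorphism commuting with an involution whose quotient $T$ is rational, and descends primitivity from the hyperk\"ahler $S^{[n]}$ to $T^{[n]}$ through the induced generically finite dominant map (Lemma \ref{lem32}, Corollary \ref{cor21} (1)).

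The remaining gaps all concern excluding \emph{arbitrary} equivariant fibrations, which is exactly the hard point. In part (1) your reduction to an invariant subtorus via the Albanese of $B$ fails whenever ${\rm Alb}(B)=0$ (e.g.\ $B = \BP^1$, which abelian varieties do dominate), so simplicity of $A$ does not by itself rule out imprimitivity. In part (2) an equivariant fibration of $\BP^\ell$ has no reason to be toric, so ``no $M$-invariant rational subspace'' proves nothing; moreover with only a dominant eigenvalue $>1$ you may have $d_2 > d_1$. The paper's uniform fix for both is the Dinh--Nguyen product formula: if $d_1(f) > d_2(f)$ then $f$ is primitive (Corollary \ref{cor21} (2)), and the Pisot condition $|\alpha_1|>1>|\alpha_2|\ge\cdots$ is used precisely to force $d_1>d_2$ (via $d_k = \prod_{j\le k}|\alpha_j|^2$ on $E^d$, resp.\ $d_k=\prod_{j\le k}|\alpha_j|$ for monomial maps); this is why Pisot \emph{units} rather than arbitrary dominant units are essential. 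Finally, in part (4) ruling out an invariant rational \emph{line} is not enough: one must show no equivariant fibration $\pi: M \dasharrow B$ exists at all, which the paper does by combining irreducibility of the full $f^*$-action on $N^1(M)$ (a Salem characteristic polynomial of degree $n+1$, forcing $n+1$ even, i.e.\ $n$ odd --- a parity constraint your sketch ignores) with a case analysis on the Kodaira dimension of the fibers using the movable cone theorem, Amerik--Campana, and the relative Iitaka fibration.
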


We construct $A$ in (1) as the self product of an elliptic curve $E$ and its desired automorphism by using {\it Pisot units} (Definition \ref{def31}) in Theorem \ref{thm33}. $M$ in (1) is obtained by a standard resolution of the quotient variety $A/\langle -1_A \rangle$ (Corollary \ref{cor31}, see also Remark \ref{rem31}). Primitivity is checked by the product formula for the relative dynamical degrees (see Section 2 for summary). We also show the existence of rationally connected manifolds of dimension $4$ and $5$ with primitive biregular automorhisms of positive topological entropy as a byproduct of the existence of certain automorphisms (Corollary \ref{cor32} and the proof). We find a desried birational automorphism in (2) among monimial birational maps studied by \cite{FW12} and \cite{Li12}. Our proof of primitivity is similar to the one for (1) (Theorem \ref{thm63}). Our Calabi-Yau manifolds in (4) are Calabi-Yau manifolds of Wehler type studied in \cite{CO15} when $\ell$ is odd (Theorem \ref{thm81}). See also \cite{Og16-2} for a different construction. We construct desired manifolds and their biregular automorphisms in (3) and (5) using K3 surfaces $S$ with special automorphisms and their Hilbert schemes of $n$ points $S^{[n]}$ (Theorems \ref{thm61}, \ref{thm71}) and check the primitivity by reducing to the following remarkable result due to Bianco \cite[Main Theorem]{Bi16}:

\begin{theorem}\label{thm4} Let $M$ be a hyperk\"ahler manifold, $f \in {\rm Bir}\, (M)$ and $d_1(f)$ the first dynamical degree of $f$. If $d_1(F) >1$, then $f$ is primitive.
\end{theorem}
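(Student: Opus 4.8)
The plan is to argue by contradiction, assuming that $f \in {\rm Bir}\,(M)$ is imprimitive with $d_1(f) > 1$, and to derive a constraint on $d_1(f)$ that forces it to equal $1$. So suppose there is a dominant rational map $\pi : M \dashrightarrow B$ with connected fibers, $0 < \dim B < \dim M$, and $f_B \in {\rm Bir}\,(B)$ with $\pi \circ f = f_B \circ \pi$. The natural tool is the product formula for relative dynamical degrees (recalled in Section 2, following \cite{DN11}, \cite{Tr15}): for the equivariant fibration $\pi$ we have $d_1(f) = \max\{\, d_1(f_B),\ d_1(f|_\pi)\,\}$, where $d_1(f|_\pi)$ is the first relative dynamical degree of $f$ over $\pi$. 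Since $d_1(f) > 1$, at least one of $d_1(f_B) > 1$ or $d_1(f|_\pi) > 1$ holds; the whole point will be to exclude both possibilities by exploiting the hyperk\"ahler structure of $M$.

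First I would dispose of the case $d_1(f|_\pi) > 1$. The relative first dynamical degree is governed by the action of $f$ on a general fiber $F$ of (a smooth model of) $\pi$, i.e. $d_1(f|_\pi) = d_1(f|_F)$ where $f|_F$ is the induced birational self-map of the general fiber. Here I would use that $F$ carries a strong geometric restriction coming from $M$: the holomorphic symplectic form $\eta_M$ restricts to something with controlled rank on the fibers, and more importantly the fibers of a non-trivial fibration on a hyperk\"ahler manifold are (by a theorem of Matsushita, for Lagrangian fibrations) abelian-variety-like, or at any rate their Albanese/cohomological structure is too rigid to support a positive-entropy automorphism in the relevant cohomological degree. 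In fact the cleaner route is cohomological: $f$ acts on $H^2(M,\mathbb{R})$ preserving the Beauville--Bogomolov--Fujiki quadratic form $q_M$, and $d_1(f)$ is the spectral radius of $f^*$ on $H^{1,1}(M)$, which by the signature $(1, \rho - 1)$ of $q_M$ on $H^{1,1}$ is a \emph{Salem number or a reciprocal quadratic unit or $1$}, and there is at most one eigenvalue off the unit circle on each side. The class $[\pi^*(\text{ample on }B)]$ is an $f^*$-eigenvector (up to the induced $f_B^*$) lying on the boundary of the positive cone, hence isotropic for $q_M$, and one shows the expanding eigendirection of $f^*$ cannot be pushed down through $\pi$, which forces $d_1(f_B) = 1$.

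The harder part, and the genuine obstacle, is to then rule out $d_1(f|_\pi) > 1$ while $d_1(f_B) = 1$: here I would invoke the structure theory of fibrations on hyperk\"ahler manifolds together with the invariance of the symplectic form. Concretely, pull back a K\"ahler class $\omega_B$ from the base; then $\pi^*\omega_B$ is a nef isotropic class fixed (projectively) by $f^*$, and by the description of the nef cone of a hyperk\"ahler manifold (Boucksom, Huybrechts) an isotropic nef class that is not big comes from a fibration whose fibers are either projective spaces, abelian varieties, or K3/hyperk\"ahler manifolds of smaller dimension — and on each of these the cohomology in the symplectic degree is too small, or the form $\eta_M|_F$ is nondegenerate making $F$ itself hyperk\"ahler so that we can induct on $\dim M$. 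Since the induction base ($\dim M = 2$, i.e. K3 surfaces) is classical — a K3 surface admits no non-trivial equivariant fibration when $d_1(f)>1$, as the invariant isotropic class would be elliptic and the induced action on the base elliptic curve has finite order — the induction closes. I expect the bookkeeping of the relative-degree product formula in the singular/birational setting (justifying passage to a common smooth model on which $\pi$ and $f$ are morphisms, using \cite{Hi64} and \cite{DN11}) to be the most delicate technical point, but no new idea beyond Bianco's is needed for the outline; I would cite \cite{Bi16} for the careful execution.
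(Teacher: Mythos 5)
The first thing to observe is that the paper does not prove this statement at all: Theorem \ref{thm4} is quoted as the Main Theorem of Bianco \cite{Bi16}, so there is no in-paper argument to compare yours against. Your sketch, which itself ends by deferring to \cite{Bi16} ``for the careful execution,'' must therefore be judged as a proof outline on its own terms, and as such it contains genuine gaps rather than mere bookkeeping to be filled in. Your opening move is fine: the product formula gives $d_1(f)=\max\{d_1(f_B),\,d_1(f|\pi)\}$, and since $f$ is an isomorphism in codimension one, $d_1(f)$ is the spectral radius of $f^*$ on $H^{1,1}(M,\R)$, hence $1$ or a Salem number by the signature $(1,h^{1,1}-1)$ of the Beauville--Bogomolov form there.

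The serious gaps come after that. Matsushita's theorem (every nontrivial fibration of a hyperk\"ahler manifold is Lagrangian with abelian fibers) applies to a \emph{holomorphic} surjection from $M$ itself; your $\pi: M \dasharrow B$ is only a dominant rational map, and resolving its indeterminacy destroys the hyperk\"ahler structure of the total space, so you cannot conclude anything about the fibers this way. The fallback you propose --- pull back an ample class from $B$, note it is isotropic and nef, and invoke the structure of the nef cone to produce a fibration with controlled fibers --- is essentially the hyperk\"ahler SYZ conjecture, which is open in general; moreover $\pi^*(\mathrm{ample})$ is a priori only movable (not nef), and it spans an $f^*$-stable \emph{subspace} rather than an eigenline unless $\rho(B)=1$, so the ``eigenvector'' step needs justification. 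Your induction is also not set up: the restriction of $\eta_M$ to a general fiber of a Lagrangian fibration is identically zero, not nondegenerate, so the fibers are never again hyperk\"ahler; and in the base case the base of an elliptic fibration on a K3 is $\BP^1$, not an elliptic curve (the correct point there is that an integral isometry of a hyperbolic lattice fixing a nonzero isotropic vector has spectral radius $1$). Bianco's actual argument circumvents all of this by distinguishing cases according to the Kodaira dimension of resolutions of the general fibers of $\pi$: general-type fibers are excluded via Amerik--Campana and Zariski-density of general orbits (a non-dense orbit would force an $f^*$-fixed class, contradicting the Salem structure of $f^*|H^{1,1}$), and the remaining cases are reduced to fibers of Kodaira dimension $0$, where the invariant isotropic class argument applies. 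This is precisely the strategy Oguiso adapts in Section 8 (Lemmas \ref{lem80}--\ref{lem82}) for Calabi--Yau manifolds of Wehler type; if you want to reconstruct Bianco's proof, that section is the template to follow, not Matsushita's theorem.
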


It might be worth mentioning that the following question remains open:

\begin{question}\label{qst1} 

(1) Are there Calabi-Yau manifolds, of {\it odd} dimension $\ge 5$, with primitive {\it biregular} automorphisms of positive topological entropy?

(2) Are there smooth rationally connected manifolds of {\it odd} dimension $\ge 7$, with primitive {\it biregular} automorphisms of positive topological entropy?
\end{question}
\par
\medskip

{\bf Acknowledgements.} I would like to express my thanks to Professors C. Bisi, S. Brandhorst, F. Catanese, I. Dolgachev, H. Esnault, Y. Kawamata, T. Kohno, T. Tsuboi, X. Yu, and especially Professors T. T. Truong and D.-Q. Zhang, for inspiring discussions and encouragement. I would like to express my thanks to Mr. S. Brandhorst for pointing out me an important paper \cite{Bi16}.  

\section{Entropy, Dynamical degrees and relative dynamical degrees}

In this section we briefly recall definitions of topological entropy, dynamical degrees, relative dynamical degrees and their basic properties, used in this paper. No new result is included. Main references here are Dinh-Sibony \cite{DS05}, Dinh-Nguyen \cite{DN11}, Dinh-Nguyen-Troung \cite{DNT12} and Troung \cite{Tr15}, \cite{Tr16}. We state the results for birational automorphisms of smooth projective varieties, while all the results below are valid for any compact K\"ahler manifolds and dominant meromorphic selfmaps (if we use K\"ahler forms instead of Fubini-Study forms or hyperplane classes). 

\subsection{Topological entropy and Gromv-Yomdin theorem}

Let $M = (M, d)$ be a compact metric space and $f : M \rightarrow M$ a continuous surjective selfmap of $M$. The {\it topological entropy} of $f$ is the fundamental invariant that measures {\it how fast two general points spread out under the action of the semi-group $\{f^n \vert n \in {\mathbf Z}_{\ge 0}\}$}. For the definition, we define the new distance $d_{f, n}$ on $X$, depending on $f$ and $n$, by 
$$d_{f, n}(x, y) = {\rm max}_{0 \le j \le n-1} d(f^j(x), f^j(y))\,\, {\rm for}\,\, x, y \in X.$$
Let $\epsilon > 0$ be a positive real number. 
We call two points $x, y \in M$ $(n, \epsilon)$-{\it separated} if $d_{f, n}(y, x) \ge \epsilon$, and a subset $F \subset M$ $(n, \epsilon)$-{\it separated} if any two distinct points of $F$ are $(n, \epsilon)$-separated. 
Let 
$$N_{d}(f, n, \epsilon) := {\rm Max}\, \{\vert F \vert\,\, \vert\,\, F \subset M\,\, {\rm is}\,\,  (n, \epsilon)-{\rm separated}\,\, \}\,\, .$$
Note that $N_{d}(f, n, \epsilon)$ is a well-defined positive integer, as $M$ is compact. The following definition was introduced by Bowen \cite{Bo73}:

\begin{definition}\label{def21} The {\it topological entropy} of $f$ is defined by:
$$h_{{\rm top}}(f) := {\rm lim}_{\epsilon \to +0} {\rm limsup}_{n \to \infty} \frac{\log N_d(f, n, \epsilon)}{n}\,\, .$$
\end{definition}
Note that $h_{{\rm top}}(f)$ does not depend on the metric $d$ of the topological space $M$.  

Let $M$ be a smooth projective variety and $f: M \to M$ a surjective morphism. Then $f$ is continuous in the classical metric topology given by the Fubini-Study metric $d(*, **)$ under any embedding $M \subset \BP^N$. So, one can speak of the topological entropy of $f$. One of the most fundamental properties of the topological entropy is the following cohomological characterization due to Gromov and Yomdin (\cite{Gr03}, \cite{Yo87}):
\begin{theorem}\label{thm21}
Let $M$ be a smooth projective variety of dimension $\ell$ and $f : M \to M$ a surjective morphism. Then, $d_p(f) = r_{p}(f)$ and 
$$h_{{\rm top}}(f) = \log {\rm max}_{0 \le p \le \ell}\, d_{p}(f) = \log {\rm max}_{0 \le p \le \ell}\, r_{p}\,(f)  = \log r\,(f)\,\, .$$
Moreover, $h_{\rm top}(f) > 0$ if and only if $d_1(f) > 1$.
\end{theorem}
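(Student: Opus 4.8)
The statement is the Gromov--Yomdin theorem, so my plan is to split it into a soft part identifying dynamical degrees with spectral radii, and the genuinely analytic part identifying either of these with topological entropy. For the soft part: since $f$ is a \emph{morphism} (not merely a dominant meromorphic map), functoriality of pullback gives $(f^n)^* = (f^*)^n$ on each $H^{p,p}(M,\R)$, so the equality $d_p(f) = r_p(f)$ is essentially Gelfand's spectral radius formula; passing from the operator-norm description of $d_p(f)$ to the intersection-number description $\lim_n\bigl((f^n)^*H^p\cdot H^{\ell-p}\bigr)^{1/n}$ (with $H$ a fixed ample class) uses only that $f^*$ preserves the salient cone of nef-type classes, so that Perron--Frobenius pins this growth rate to the spectral radius. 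Because $f^*$ respects the Hodge grading and, by the Dinh--Sibony estimates, its spectral radius on the odd-degree cohomology is dominated by $\max_p r_p(f)$, one also gets $r(f) = \max_{0\le p\le\ell} r_p(f)$. Altogether $\log\max_p d_p(f) = \log\max_p r_p(f) = \log r(f)$, and it remains to identify this common value with $h_{\rm top}(f)$.

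That identification is where the real content lies, and I would simply quote the two classical inequalities. The upper bound $h_{\rm top}(f)\le\log\max_p d_p(f)$ is Gromov's inequality \cite{Gr03}: one estimates the entropy by the exponential growth rate of the Riemannian volumes of the iterated graphs $\Gamma_n = \{(x,f(x),\dots,f^{n-1}(x))\}\subset M^n$, and bounds of Lelong--Wirtinger type rewrite these volumes through the classes $(f^j)^*[\omega]$ for a fixed Fubini--Study form $\omega$, whose growth is controlled by $\max_p d_p(f)$. The reverse bound $h_{\rm top}(f)\ge\log\max_p d_p(f)$ is Yomdin's theorem \cite{Yo87}: using that $f$ is real-analytic (here even algebraic) and Yomdin's estimates on the growth of local volumes of $p$-dimensional cycles under iteration, $\log d_p(f)$ is a lower bound for $h_{\rm top}(f)$ for each $p$. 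Combining the two gives $h_{\rm top}(f) = \log\max_p d_p(f)$. These two estimates are the hard part: they are genuinely geometric/analytic, not formal. Presentations adapted to the present generality appear in \cite{DS05} and \cite{DN11}.

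For the last assertion I would argue as follows. First, $d_p(f)\ge 1$ for every $p$: $f$ is surjective between varieties of the same dimension, hence generically finite, so $f^*H$ is big and nef and $(f^n)^*H^p\cdot H^{\ell-p}$ is a positive integer, forcing the defining limit to be $\ge 1$. Second, $p\mapsto\log d_p(f)$ is concave; for a morphism this follows from the Khovanskii--Teissier inequalities applied to the nef classes $H$ and $(f^j)^*H$ (see also \cite{DS05}, \cite{Tr15}). Concavity together with $d_0(f) = 1$ yields $\log d_p(f)\le p\log d_1(f)$, i.e. $d_p(f)\le d_1(f)^p$, for all $p$. Hence if $d_1(f) = 1$ then $d_p(f) = 1$ for every $p$ and $h_{\rm top}(f) = \log\max_p d_p(f) = 0$; conversely if $d_1(f) > 1$ then $\max_p d_p(f)\ge d_1(f) > 1$ and $h_{\rm top}(f) > 0$. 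Since the section records only known material, the actual write-up amounts to citing \cite{Bo73}, \cite{Gr03}, \cite{Yo87}, \cite{DS05}, \cite{DN11} for the individual ingredients.
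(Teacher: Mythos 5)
The paper offers no proof of this theorem: it is quoted as the classical Gromov--Yomdin theorem with references to \cite{Gr03} and \cite{Yo87} (and the section explicitly states that no new result is included). Your proposal correctly identifies the standard ingredients --- the formal identity $d_p(f)=r_p(f)$ from $(f^n)^*=(f^*)^n$, Gromov's upper bound via volumes of iterated graphs, Yomdin's lower bound, and log-concavity of $p\mapsto d_p(f)$ for the final equivalence --- so it is consistent with, and somewhat more detailed than, the paper's treatment.
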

Here $r_p\,(f)$ and $r\,(f)$ are the spectral radii of $f^*\vert H^{p,p}(X)$ and $f^* \vert \oplus_{p=0}^{2\ell} H^{p}(M, {\mathbf Z})$ respectively. The {\it $p$-th dynamical degree} $\lambda_p(f)$ is defined (and well-defined) by:
$$d_p(f)=\lim _{n\rightarrow\infty}(\int_{X} (f^n)^{*}(\omega^{p}) \wedge \omega^{k-p})^{\frac{1}{n}} = \lim _{n\rightarrow\infty}((f^n)^{*}(H^p).H^{\ell -p})_{M}^{\frac{1}{n}} \ge 1\,\, .$$
Here $H$ is the hyperplane class of $M$ (under any embedding $M \subset \BP^N$) and $\omega$ is the Fubini-Study form, i.e., the positive closed $(1,1)$-form induced by the Fubini-Study form of $\BP^N$. 

\begin{remark}\label{rem20} 
\begin{enumerate}
\item The equality $d_p(f) = r_p(f)$ shows that one can replace $H^{p, p}(M)$ by $N^p(M)$, provided that $M$ is projective (see eg. \cite{Tr15}, \cite{Tr16}). Here $N^p(M)$ is the finitely generated free $\Z$-module consisting of the numerical equivalence classes of algebraic $p$-cocycles. 

\item The first equality $d_p(f) = r_{p}(f)$ is purely linear algebraic and indeed just follows from $(f^n)^* = (f^*)^n$ for all $n \ge 0$, which is trivial for automorphisms. So, for instance, if $f \in {\rm Bir}\, (M)$ is isomorphic in codimension one, then $d_1(f) = r_1(f)$, as $(f^n)^* = (f^*)^n$ on $N^1(M)$. 
\end{enumerate}
\end{remark}

\subsection{Dynamical degrees} 
One can define $r_p(f)$, $r(f)$ and $d_p(f)$ also for birational self map $f : M \dasharrow M$, i.e., for $f \in {\rm Bir}\, (M)$. Precisely, for each $n \ge 1$, we define the pull back $(f^n)^*$ by $(f^n)^*(a) := (p_{1,n})_* ((p_{2, n})^*(a))$ and $d_p(f)$ by the same formula above. Here $p_{1, n} : \tilde{M}_n \to M$ and $p_{2, n} : \tilde{M}_n \to M$ be any birational {\it morphisms} from a smooth projective variety $\tilde{M}_n$, depending on $f$ and $n$, such that $f^n = p_{2, n} \circ (p_{1, n})^{-1}$. In general $r_p(f)$ and $r(f)$ (we may just take $n=1$) are not birational invariant, as the standard Cremona involution of $\BP^2$, which is not isomorphic in codimension one, shows. On the other hand, the dynamical degrees $d_p(f)$ are well-defined birational invariant, as Dinh-Sibony (\cite{DS05}) proved (See also \cite{Tr15} for purely algebro-geometric proof):
\begin{theorem}\label{thm22}
Let $M$ and $M'$ be smooth projective varieties and $\mu : M \dasharrow M'$ a birational map. Let $f \in {\rm Bir}\, (M)$ and let $f' = \mu \circ f \circ \mu^{-1}$ be the birational automorphism of $M'$ induced by $f$ and $\mu$. Then, $d_p(f)$ is well defined and $d_p(f) = d_p(f')$ for all $p$. 
\end{theorem}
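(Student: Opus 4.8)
The plan is to reduce everything to a single geometric inequality, the \emph{submultiplicativity of degrees}, and then extract both the well-definedness of $d_{p}$ and its birational invariance as formal consequences.

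First I would treat the well-definedness of the quantities involved. For $f \in {\rm Bir}\,(M)$ and a fixed ample (e.g.\ hyperplane) class $H$ on $M$, set $\deg_{p}(f) := ((f^{*}H^{p}) \cdot H^{\ell-p})_{M}$, where $f^{*} = (p_{1})_{*}(p_{2})^{*}$ is computed on \emph{any} resolution $p_{1}, p_{2} \colon \tilde{M} \to M$ of the graph of $f$ with $f = p_{2} \circ p_{1}^{-1}$. This is independent of the resolution: if $\rho \colon \tilde{M}' \to \tilde{M}$ is a further birational morphism, the projection formula gives $\rho_{*}\rho^{*} = {\rm id}$ on $N^{\bullet}$, hence $(p_{1}\rho)_{*}(p_{2}\rho)^{*} = (p_{1})_{*}(p_{2})^{*}$; since any two resolutions are dominated by a common one, the claim follows. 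The same discussion applies verbatim to a birational map $X \dasharrow Y$ between two (possibly different) smooth projective varieties once ample classes $H_{X}$ on $X$ and $H_{Y}$ on $Y$ are fixed, giving a number $\deg_{p}(X \dasharrow Y)$.

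Next, the technical core: I would prove, following Dinh--Sibony \cite{DS05} (or Truong \cite{Tr15} for the algebro-geometric version), that there is a constant $c \ge 1$, depending only on the varieties and the chosen ample classes and not on the maps, such that $\deg_{p}(g \circ f) \le c\, \deg_{p}(f)\, \deg_{p}(g)$ for all birational maps $f \colon X \dasharrow Y$, $g \colon Y \dasharrow W$. The argument is to pass to a smooth projective model $\Gamma$ of the closure of $\{(x,y,z) : y = f(x),\, z = g(y)\} \subset X \times Y \times W$, with induced projections $\pi_{X}, \pi_{Y}, \pi_{Z}$; then $g \circ f$, $f$ and $g$ are realized on $\Gamma$ by the pairs $(\pi_{X}, \pi_{Z})$, $(\pi_{X}, \pi_{Y})$ and $(\pi_{Y}, \pi_{Z})$ respectively, after harmless further blow-ups. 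Writing the intersection number defining $\deg_{p}(g \circ f)$ as $(\pi_{Z}^{*}H_{W}^{p} \cdot \pi_{X}^{*}H_{X}^{\ell-p} \cdot [\Gamma])$, the point is to dominate $\pi_{X}^{*}H_{X}$ by a constant multiple of the pullback to $\Gamma$ of an ample class on the model of the graph of $f$; since that projection is birational and $\pi_{Y}^{*}H_{Y}$ is nef, positivity of closed positive currents (equivalently, in the algebro-geometric proof, a Chow-type moving lemma together with the geometry of the nef and pseudo-effective cones) lets one insert the copies of $\pi_{Y}^{*}H_{Y}$ needed to split the intersection number into the product of the two defining $\deg_{p}(f)$ and $\deg_{p}(g)$. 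Granting this, the sequence $a_{n} := c\, \deg_{p}(f^{n})$ is submultiplicative, $a_{m+n} = c\,\deg_{p}(f^{m}\circ f^{n}) \le c\cdot c\,\deg_{p}(f^{m})\,\deg_{p}(f^{n}) = a_{m}a_{n}$, so Fekete's lemma gives $\lim_{n} a_{n}^{1/n} = \inf_{n} a_{n}^{1/n}$; dividing by $c^{1/n} \to 1$ shows $d_{p}(f) = \lim_{n} \deg_{p}(f^{n})^{1/n}$ exists, i.e.\ $d_{p}$ is well defined.

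Finally, birational invariance. Since $(f')^{n} = \mu \circ f^{n} \circ \mu^{-1}$, applying the submultiplicativity estimate twice (with the varieties $M$, $M'$ and the fixed maps $\mu$, $\mu^{-1}$) yields
$$\deg_{p}\big((f')^{n}\big) \;\le\; c^{2}\, \deg_{p}(\mu)\, \deg_{p}(f^{n})\, \deg_{p}(\mu^{-1}),$$
where $C := c^{2}\deg_{p}(\mu)\deg_{p}(\mu^{-1})$ is a constant independent of $n$. Hence $d_{p}(f') = \lim_{n} \deg_{p}((f')^{n})^{1/n} \le \lim_{n} C^{1/n} \deg_{p}(f^{n})^{1/n} = d_{p}(f)$, and exchanging the roles of $M \leftrightarrow M'$ and $\mu \leftrightarrow \mu^{-1}$ gives the reverse inequality, so $d_{p}(f) = d_{p}(f')$. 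I expect the only real obstacle to be the submultiplicativity estimate: making precise the domination of $\pi_{X}^{*}H_{X}$ on the triple graph requires controlling the interaction of the indeterminacy loci of $f$ and $g$, which is exactly where genuine positivity (of closed positive currents, or of the relevant cones in the algebraic proof) enters; the rest of the argument is purely formal.
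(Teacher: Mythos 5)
Your plan is correct and is essentially the standard Dinh--Sibony/Truong argument: the paper itself gives no proof of this statement but simply cites \cite{DS05} and \cite{Tr15}, and those references proceed exactly as you do, via the uniform submultiplicativity estimate $\deg_p(g\circ f)\le c\,\deg_p(f)\,\deg_p(g)$ (proved by regularization of positive closed currents, resp.\ by an algebraic moving lemma), Fekete's lemma for existence of the limit, and conjugation by $\mu$ for birational invariance. You have correctly identified the submultiplicativity inequality as the only genuinely hard step; everything else in your outline is the formal bookkeeping it is claimed to be.
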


\subsection{Relative dynamical degrees}

\begin{setup}\label{set21}
Let $M$ and $B$ be smooth projective varieties of dimension $\ell$ and $b$ respectively, $f \in {\rm Bir}\, (M)$ and $\pi : M \to B$ be an $f$-equivariant surjective morphism so that there is $f_B \in {\rm Bir}\, (B)$ with $\pi \circ f = f_B \circ \pi$.
Here we do not require that $\pi$ has connected fibers. 
Let $H_M$ and $H_B$ be the hyperplane class of $M$ and $B$ (under any embedding to projective spaces) and $\omega_M$ and $\omega_B$ the Fubini-Study forms induced by the embeddings.
\end{setup}

\begin{remark}\label{rem21}If $f \in {\rm Bir}\, (M)$ is imprimitive and $\pi : M \dasharrow B$ be $f$-equivariant, then by replacing $M$ by $\tilde{M}$, a resolution $\mu : \tilde{M} \to M$ of indeterminacy of $\pi$, $f$ by $\tilde{f} = \mu^{-1} \circ f \circ \mu$ and $\pi$ by the induced morphism $\pi \circ \mu$, we always obtain $\tilde{f}$-equivariant surjective morphism $\pi \circ \mu : \tilde{M} \to B$ as in Set-up \ref{set21}. In this modification, we may loose biregularity of $\tilde{f}$ in general even if $f \in {\rm Aut}\, (M)$ but the values $d_p(f)$ remain the same, i.e., $d_p(\tilde{f}) = d_{p}(f)$ by Theorem \ref{thm22}. So, when we compute the dynamical degree $d_p(f)$, we may assume $\pi$ is a surjective {\it morphism}.
\end{remark}

The notion of the relative dynamical degrees $d_p(f|\pi )$ is defined by Dinh-Nguyen \cite{DN11} as follows: 
\begin{definition}\label{def22} Under Set-up \ref{set21}, the {\it relative $p$-th dynamical degree} $d_p(f|\pi )$ is defined as
$$\lim _{n \to\infty}(\int _M(f^n)^*(\omega _M^p) \wedge \omega _M^{\ell -b -p} \wedge \pi ^*(\omega _B^{b}))^{\frac{1}{n}} = \lim _{n\to\infty}((f^n)^*(H_M^p).H_M^{\ell -b -p}.\pi^*(H_B^{b}))_M^{\frac{1}{n}} \ge 1\,\, .$$
\end{definition} 
$d_p(f|\pi )$ is well-defined by \cite{DN11}. We may regard $\pi^*(H_B^{b})$ as a general fiber class of $\pi$, up to positive constant multiple. So, the relative dynamical degree could be considered as the dynamical degree of $f$ {\it restricted to a general fiber} $F$.

The following important result, called the {\it product formula}, was proved by Dinh-Nguyen \cite{DN11} (See also \cite{Tr15} for purely algebro-geometric approach):

\begin{theorem}\label{thm23} Under Set-up \ref{set21}, 
$d_p(f)= \max_{\max\{0,p-\ell +b\}\leq j\leq 
\min\{p,b\}}d_j(g)d_{p-j}(f|\pi)$. 
\end{theorem}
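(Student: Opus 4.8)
By Theorem \ref{thm22} and Remark \ref{rem21} I may assume I am in the situation of Set-up \ref{set21}, so that $\pi : M \to B$ is a surjective \emph{morphism}; write $g := f_B \in {\rm Bir}(B)$ and $h := \pi^*H_B$, a nef class on $M$ with $h^{b+1} = \pi^*(H_B^{b+1}) = 0$. Since $\pi \circ f^n = g^n \circ \pi$ one has $(f^n)^*(h^j) = (f^n)^*\pi^*(H_B^j) = \pi^*\big((g^n)^*(H_B^j)\big)$ for every $j$: vertical classes stay vertical and their growth is governed by the dynamics of $g$ on $B$, while the ``horizontal'' part should see only the dynamics along a general fibre $F$, of dimension $\ell - b$. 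The index range $\max\{0,p-\ell+b\} \le j \le \min\{p,b\}$ is exactly the set of $j$ for which both $d_j(g)$ (on the $b$-fold $B$) and $d_{p-j}(f|\pi)$ (on the $(\ell-b)$-fold $F$) are defined; the shape of the formula is the ``Leray/Künneth'' one, the transparent model case being a product $M = B\times F$ with $f = g\times f_F$ biregular, where it reduces to the fact that the spectral radius of a tensor product is the product of the spectral radii. I would prove the two inequalities separately; the ``$\ge$'' direction amounts to $d_p(f) \ge d_j(g)\,d_{p-j}(f|\pi)$ for each admissible $j$.

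\emph{Upper bound.} The crux is a uniform estimate of the form
$$((f^n)^*(H_M^p)\cdot H_M^{\ell-p})_M \le C\sum_{j}((g^n)^*(H_B^j)\cdot H_B^{b-j})_B\cdot((f^n)^*(H_M^{p-j})\cdot H_M^{\ell-b-p+j}\cdot h^{b})_M,$$
with $C>0$ independent of $n$, the sum over the admissible $j$, and the two factors on the right being precisely the sequences whose $n$-th roots converge to $d_j(g)$ and to $d_{p-j}(f|\pi)$. To get it I would pass to a resolution $\tilde M_n$ of $f^n$ (where the pullbacks of $H_M$ and of $h$ are nef), decompose $H_M$ into nef pieces $H_M = \varepsilon h + (H_M - \varepsilon h)$ so as to expand $H_M^p$ according to its ``vertical degree'' $j$ (the powers of $h$, which vanish beyond $j=b$), use the projection formula along $\pi$ to split off the factor computing $d_j(g)$ on $B$, and bound the remaining purely fibrewise intersection numbers by the $\pi$-ample class $H_M$; the delicate point is that this last step must come with an error bound uniform in $n$, which is the substance of the fibrewise comparison estimates of \cite{DN11} (respectively the cycle-theoretic estimates of \cite{Tr15}). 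Taking $n$-th roots and $\limsup$, and using $\limsup_n(\sum_j x^{(j)}_n)^{1/n} = \max_j\limsup_n (x^{(j)}_n)^{1/n}$ together with $\limsup_n(x_ny_n)^{1/n}\le \limsup_n x_n^{1/n}\cdot\limsup_n y_n^{1/n}$ (the relevant limits exist), this gives $d_p(f)\le \max_j d_j(g)\,d_{p-j}(f|\pi)$.

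\emph{Lower bound.} Fix an admissible $j$. The idea is to test $(f^n)^*$ against a ``product'' class that is vertical of degree $j$ and relatively horizontal of degree $p-j$: take $\alpha := H_B^j$ on $B$, so $((g^n)^*\alpha\cdot H_B^{b-j})_B \ge d_j(g)^n$, take $\beta := H_M^{p-j}$ on $M$, so $((f^n)^*\beta\cdot H_M^{\ell-b-p+j}\cdot h^{b})_M \ge d_{p-j}(f|\pi)^n$, and consider the positive closed $(p,p)$-current $\pi^*\alpha\wedge\beta$, whose class is $h^jH_M^{p-j}$. One shows $((f^n)^*(\pi^*\alpha\wedge\beta)\cdot H_M^{\ell-p})_M \ge c\, d_j(g)^n\,d_{p-j}(f|\pi)^n$, the point being that $(f^n)^*(\pi^*\alpha\wedge\beta) = \pi^*((g^n)^*\alpha)\wedge(f^n)^*\beta$ and that the mass of a wedge of a purely vertical and a relatively horizontal positive current factors as a base contribution times a fibre contribution. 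On the other hand, since $H_M - \varepsilon h$ is ample for small $\varepsilon>0$ one has $h\le \varepsilon^{-1}H_M$ in the nef cone, hence $h^jH_M^{p-j}\le \varepsilon^{-j}H_M^p$ and, by linearity of $(f^n)^*$ and pseudo-effectivity of the pullback of a nef class, $((f^n)^*(\pi^*\alpha\wedge\beta)\cdot H_M^{\ell-p})_M\le \varepsilon^{-j}((f^n)^*(H_M^p)\cdot H_M^{\ell-p})_M$. Taking $n$-th roots yields $d_p(f)\ge d_j(g)\,d_{p-j}(f|\pi)$, and letting $j$ vary completes the proof.

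\emph{Main obstacle.} The genuine difficulty sits in the lower bound, and more precisely in making rigorous, uniformly in $n$, the identity $(f^n)^*(\pi^*\alpha\wedge\beta) = \pi^*((g^n)^*\alpha)\wedge(f^n)^*\beta$ together with the factorisation of masses: because $f^n$ is only birational, pull-back of currents and wedge products do not commute freely and the wedge product itself must first be given meaning, which is exactly what Dinh--Nguyen's calculus of super-potentials and regularisation is built to control. The cycle-theoretic route of \cite{Tr15} trades the currents for an honest effective cycle $\pi^{-1}(Z)\cdot W$ (with $Z\subset B$ of dimension $b-j$ and $W\subset M$ relatively horizontal of dimension $\ell-b-(p-j)$, each chosen of near-maximal growth), but then meets the analogous problem of bounding the excess intersection and the failure of properness uniformly as $n\to\infty$. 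Everything else — the reductions, the vertical-degree expansion, and the elementary $\limsup$ manipulations — is routine.
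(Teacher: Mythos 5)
This is not a result the paper proves: Section 2 states explicitly that it contains no new results, and Theorem \ref{thm23} is quoted as the product formula of Dinh--Nguyen \cite{DN11} (with a purely algebro-geometric proof in \cite{Tr15}). So there is no in-paper proof to compare yours against; the only fair comparison is with the cited sources.

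Measured against those, your sketch is a correct and well-organized outline of how the theorem is actually proved: the reduction to a morphism via Theorem \ref{thm22} and Remark \ref{rem21}, the equivariance $(f^n)^*\pi^* = \pi^*(g^n)^*$ (note the paper's $g$ is its $f_B$, a typo you silently and correctly fix), the splitting of $H_M^p$ by vertical degree with the index range dictated by $h^{b+1}=0$ and $\dim F = \ell - b$, the two separate inequalities, and the elementary $\limsup$ bookkeeping are all as in \cite{DN11} and \cite{Tr15}. But you should be clear-eyed that what you have written is a plan, not a proof. The two points you yourself flag as the ``substance'' --- the uniform-in-$n$ fibrewise comparison estimate in the upper bound, and, in the lower bound, giving meaning to $(f^n)^*(\pi^*\alpha\wedge\beta)$, commuting it with $\pi^*$ and the wedge, and factorizing the resulting mass into a base and a fibre contribution --- are not technical afterthoughts; they \emph{are} the theorem. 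Everything you do supply (projection formula, nefness manipulations such as $h \le \varepsilon^{-1}H_M$, submultiplicativity giving $d_j(g)^n \le C\,((g^n)^*H_B^j\cdot H_B^{b-j})_B$) is the routine part. If the intent is to cite \cite{DN11}/\cite{Tr15} for those estimates, say so and the argument closes; if the intent is a self-contained proof, the super-potential regularization (or, on the algebraic side, the uniform control of excess intersection for the cycles $\pi^{-1}(Z)\cdot W$ as $n\to\infty$) still has to be carried out, and that is where all the difficulty lives.
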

The following conclusions (1) and (2) of Theorem \ref{thm23} are observed by \cite[Corollary 1.2]{DN11} and \cite[Theorem 4.1 (1)]{OT15} respectively, and will be used in the proof of our main result:
\begin{corollary}\label{cor21} Let $M$ and $B$ be smooth projective varieties, $f \in {\rm Bir}\, (M)$ be a birational automorphism of $M$, $\pi : M \dasharrow B$ an $f$-equivariant dominant rational map and $f_B$ be the induced birational automorphism of $B$. Then:
\begin{enumerate}

\item If $\pi$ is generically finite, i.e., if $\dim\, M = \dim\, B$, then $d_p(f) = d_p(f_B)$ for every $p$. 

\item If $0 < \dim\, B < \dim\, M$, then $d_1(f) \le d_2(f)$. In particular, $f$ is primitive if $d_1(f) > d_2(f)$. 

\end{enumerate}
\end{corollary}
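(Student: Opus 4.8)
The plan is to read both statements off the product formula (Theorem~\ref{thm23}), after reducing to the case in which $\pi$ is an honest morphism. For the reduction I would use Remark~\ref{rem21}: given an $f$-equivariant dominant rational map $\pi:M\dasharrow B$, pass to a resolution $\mu:\tilde M\to M$ of the indeterminacy of $\pi$, replace $f$ by $\tilde f=\mu^{-1}\circ f\circ\mu$ and $\pi$ by the induced surjective morphism $\pi\circ\mu:\tilde M\to B$. By Theorem~\ref{thm22} one has $d_p(\tilde f)=d_p(f)$ for all $p$, and likewise the values $d_p(f_B)$ are unchanged, so it suffices to prove the asserted equalities and inequality for $\tilde f$. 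From now on $\pi$ is a surjective morphism, so Set-up~\ref{set21} holds and Theorem~\ref{thm23} applies with $g:=f_B$, $\ell:=\dim M$ and $b:=\dim B$. I will also use $d_0(g)=1$ for any $g$, and $d_0(f|\pi)=1$ --- the latter because the integrand defining $d_0(f|\pi)$ in Definition~\ref{def22} does not depend on $n$.

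For (1): when $\dim M=\dim B$ we have $\ell=b$, so $d_0(f|\pi)$ is the only relative dynamical degree available and it equals $1$. In Theorem~\ref{thm23} the summation index is then constrained by $\max\{0,p\}\le j\le\min\{p,\ell\}$, which forces $j=p$, and therefore $d_p(f)=d_p(f_B)\,d_0(f|\pi)=d_p(f_B)$ for every $p$.

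For (2): assume $0<b<\ell$. The crucial point is that $j=1$ always lies in the admissible range of the index in Theorem~\ref{thm23} for $p=2$, since $\max\{0,2-\ell+b\}\le 1$ (because $\ell\ge b+1$) and $1\le\min\{2,b\}$ (because $b\ge 1$). Hence
$$d_2(f)\ \ge\ d_1(f_B)\cdot d_1(f|\pi)\ \ge\ \max\{d_1(f_B),\,d_1(f|\pi)\}\ =\ d_1(f),$$
where the middle inequality uses $d_1(f_B)\ge 1$ and $d_1(f|\pi)\ge 1$ (Definition~\ref{def22} and the definition of the dynamical degree), and the final equality is the product formula for $p=1$ (whose index range is $\{0,1\}$) together with $d_0(f_B)=d_0(f|\pi)=1$. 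The remaining assertion is the contrapositive: if $f$ were imprimitive, Definition~\ref{def1} would furnish an $f$-equivariant dominant rational map $\pi:M\dasharrow B$ with $0<\dim B<\dim M$, and then $d_1(f)\le d_2(f)$ would contradict $d_1(f)>d_2(f)$; so $f$ is primitive.

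The argument is short, and the only two places needing care are: (a) that passing to the resolution in Remark~\ref{rem21} leaves every $d_p$ unchanged --- this is exactly Theorem~\ref{thm22}, and is what makes the reduction to a morphism legitimate even when $f$ starts out biregular; and (b) the bookkeeping of the index ranges in Theorem~\ref{thm23} for $p=1,2$, combined with the normalisations $d_0(g)=d_0(f|\pi)=1$. Neither is a genuine obstacle; the mathematical content lies entirely in Theorems~\ref{thm22} and~\ref{thm23}.
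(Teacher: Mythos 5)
Your proof is correct and follows exactly the route the paper intends: the paper states Corollary~\ref{cor21} as a consequence of the product formula (Theorem~\ref{thm23}), citing [DN11, Corollary 1.2] and [OT15, Theorem 4.1(1)] rather than writing out the index bookkeeping, and your reduction via Remark~\ref{rem21}/Theorem~\ref{thm22} plus the computation of the admissible ranges for $p=1,2$ (together with $d_0=1$) is precisely that derivation.
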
 

\section{Pisots units and Salem numbers.}

In this section, we briefly recall definitions and properties of Pisot units and Salem numbers, which we will use. Theorem \ref{prop41} is a slight generalization of \cite[Theorem 4.1]{EOY16} which is used in the proof of Theorem \ref{thm81} and will be applicable in other situation. 

First, we recall the definition of Pisot unit and properties we will use.

\begin{definition}\label{def31}
Let $\overline{\Z} \subset \C$ be the ring of algebraic integers.  
A real algebraic 
integer $\alpha \in \overline{\Z} \cap \R$ is called a {\it Pisot number} if $|\alpha| > 1$ and $|\alpha'| < 1$ for all Galois conjugates $\alpha' \not= \alpha$ of $\alpha$ over $\Q$. Here and hereafter, for a complex number $a$, we denote the complex conjugate of $a$ by $\overline{a}$ and define $|a| := \sqrt{a \overline{a}}$. The degree of the Pisot number $\alpha$ is the degree of the minimal polynomial of $\alpha$ over $\Z$ or equivalently over $\Q$. A Pisot number $\alpha$ of degree $d$ is called a {\it Pisot unit} if $\alpha$ is invertible in $\overline{\Z}$, i.e. $\prod_{j=1}^{d}\alpha_j = \pm 1$, where $\alpha_i$ ($1 \le j \le d$) are the Galois conjugates of $\alpha$.  We note that if $\alpha$ is a Pisot number or a Pisot unit of degree $d$ then so is $-\alpha$. So, {\it from now on, we may and will assume that Pisot numbers are greater than $1$}.   
\end{definition}

\begin{example}\label{ex31}

(1) Let $a$ be an integer such that $a \ge 3$. Then, by definition, the largest root of $X^2 - aX + 1 = 0$ is a Pisot unit of degree $2$. Similarly, for a positive integer $b$, the largest root of $X^2 - bX - 1 = 0$ is a Pisot unit of degree $2$.  

(2) The largest real roots $\alpha_3 = 1.324\ldots\,\, ,\,\, \alpha_4 = 1.380\ldots\,\, ,\,\, \alpha_5 = 1.443\ldots$
 of the equations $X^3 -X -1 = 0\,\, ,\,\, X^4 -X^3 -1 = 0\,\, ,\,\, X^5 -X^4 -X^3 + X^2 -1 = 0$ are Pisot units of degree $3$, $4$, $5$ respectively. These three Pisot units $\alpha_3 < \alpha_4 < \alpha_5$ are the smallest three positive Pisot numbers (\cite[Theorem 7.2.1]{BDGPS92}). 

\end{example}

There are plenty of Pisot units (\cite[Theorem 5.2.2]{BDGPS92}):
\begin{theorem}\label{thm31}
Let $d \ge 2$ be any positive integer and $K$ any real field extension of $\Q$ of degree $d = [K : \Q]$ (for instance $K = \Q(\sqrt[d]{2})$). Then there is a Pisot unit $\alpha \in K$ of degree $d = [K:\Q]$. 
\end{theorem}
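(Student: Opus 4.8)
The plan is to prove this by geometry of numbers, using Dirichlet's unit theorem together with a ``lattice meets an open cone'' argument in the logarithmic embedding. By hypothesis $K$ admits a real embedding, which we fix and use to regard $K$ as a subfield of $\R$; call this embedding $\sigma_1 = \mathrm{id}_K$, and let $\sigma_1, \dots, \sigma_{r_1}$ be the real embeddings of $K$ and $\sigma_{r_1+1}, \overline{\sigma}_{r_1+1}, \dots, \sigma_{r_1+r_2}, \overline{\sigma}_{r_1+r_2}$ the complex ones, so $r_1 + 2r_2 = d$ and $r_1 \ge 1$. Consider the logarithmic map
$$L \colon \mathcal{O}_K^{\times} \longrightarrow \R^{r_1+r_2}, \qquad u \longmapsto \bigl(\log|\sigma_1(u)|, \dots, \log|\sigma_{r_1}(u)|, \, 2\log|\sigma_{r_1+1}(u)|, \dots, 2\log|\sigma_{r_1+r_2}(u)|\bigr).$$
By Dirichlet's unit theorem the image $\Lambda := L(\mathcal{O}_K^{\times})$ is a full lattice in the hyperplane $H = \{\, y : \sum_i y_i = 0 \,\} \subset \R^{r_1+r_2}$, which has positive dimension $r_1+r_2-1 \ge 1$ precisely because $d \ge 2$.

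\emph{Step 1: produce a unit that is large at $\sigma_1$ and small everywhere else.} Let $R := \{\, y \in H : y_1 > 0 \ \text{and}\ y_j < 0 \ \text{for all}\ 2 \le j \le r_1+r_2 \,\}$. This is a nonempty open (in $H$) convex cone: the point $p$ with $p_1 = r_1+r_2-1$ and $p_j = -1$ for $j \ge 2$ lies in $H$ and satisfies the strict inequalities, so $p \in R$ and a ball $B_H(p,\delta) \subseteq R$; scaling by $t \to \infty$ shows $R$ contains balls of $H$ of arbitrarily large radius, in particular one of radius exceeding the covering radius of $\Lambda$. Hence $R$ meets $\Lambda$, i.e.\ there is $u \in \mathcal{O}_K^{\times}$ with $|\sigma_1(u)| > 1$ and $|\sigma_j(u)| < 1$ for every other archimedean place of $K$. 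Replacing $u$ by $-u$ if necessary, we may assume $u > 1$ (recall $u = \sigma_1(u) \in \R$).

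\emph{Step 2: $u$ is a Pisot unit.} The Galois conjugates of $u$ over $\Q$ are a sub-multiset of $\{\sigma_i(u)\}_{1 \le i \le d}$ containing $u$ itself; by Step 1 every conjugate other than $u$ has modulus $< 1$, while $u > 1$, so $u$ is a Pisot number. It is a unit of $\mathcal{O}_K$, hence of $\mathcal{O}_{\Q(u)}$ (take norms), so $u$ is a Pisot unit. It remains to check $\deg u = d$, i.e.\ $\Q(u) = K$. Suppose instead $L' := \Q(u) \subsetneq K$. Then, by separability, exactly $[K:L'] \ge 2$ of the embeddings $\sigma_i$ of $K$ restrict to $\mathrm{id}_{L'}$ on $L'$; for each of them $\sigma_i(u) = u$, so $|\sigma_i(u)| = u > 1$. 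Since $\sigma_1$ is one of these, at least one $\sigma_i$ with $i \ge 2$ also restricts to $\mathrm{id}_{L'}$, contradicting $|\sigma_i(u)| < 1$ for $i \ge 2$. Therefore $\Q(u) = K$ and $\deg u = [K:\Q] = d$, so $u$ is the desired Pisot unit in $K$.

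I expect the only genuinely delicate point to be the last one (Step 2, the degree claim): one must rule out that the unit produced already lies in a proper subfield and is merely a Pisot unit of smaller degree. The argument above shows this is exactly where the \emph{strict} inequalities $|\sigma_j(u)| < 1$ for all $j \ge 2$ (not merely $\sigma_j(u) \ne u$) are indispensable, forcing $\sigma_1$ to be the unique archimedean place at which $u$ has modulus $\ge 1$. Everything else is routine geometry of numbers; alternatively Step 1 could be obtained directly from Minkowski's convex body theorem applied to a suitable box in $K \otimes_{\Q} \R$, but phrasing it via the unit theorem makes transparent both that $u$ is a unit and that the sign conditions can be met.
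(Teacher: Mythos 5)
Your argument is correct and complete: Dirichlet's unit theorem gives a full lattice in the trace-zero hyperplane, the open cone $\{y_1>0,\ y_j<0\ (j\ge 2)\}$ contains arbitrarily large balls and hence a lattice point, and the strict inequalities $|\sigma_j(u)|<1$ for all $j\ge 2$ simultaneously yield the Pisot property and, via the extension count $[K:\Q(u)]$ of embeddings restricting to $\mathrm{id}_{\Q(u)}$, force $\Q(u)=K$ so that $\deg u=d$; the norm argument shows $u$ is a unit in the sense of Definition \ref{def31}. Note that the paper does not prove this statement itself but quotes it from \cite[Theorem 5.2.2]{BDGPS92}, and your proof is essentially the standard one found there, so nothing further is needed.
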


The first dynamical degrees of birational automorphisms of rational surfaces, which are not conjugate to biregular automorphisms, are necessarily Pisot numbers (\cite{DF01}, \cite{BC13}). They are not necessarily Pisot units 
(cf. Section 4).

Next we recall the definition of Salem number (including quadratic units) and properties we will use. 

\begin{definition}\label{def41} A polynomial $P(X) \in \Z [X]$ is called a {\it Salem polynomial} if it is irreducible over $\Z$, monic,  of even degree $2d \ge 2$ and the complex zeroes of $P(x)$ are of the form ($1 \le i \le d-1$):
$$a > 1\,\, ,\,\, 0 < \frac{1}{a} < 1\,\, ,\,\, \alpha_i\,\, ,\,\, \alpha_{i+d-1} := \overline{\alpha}_i \in S^1 := 
\{z \in \C\, \vert\, \vert z \vert = 1\} \setminus \{\pm 1\}\,\, .$$ 
A {\it Salem number} is the largest real root $a >1$ of a Salem polynomial $P(X)$ and we call the degree of $P(X)$ the degree of $a$. By definition, Salem numbers are always units, i.e., in $\overline{\Z}^{\times}$, and of even degree. 
\end{definition} 

\begin{example}\label{ex41} Unlike Pisot numbers, it is unknown which is the smallest Salem number. The smallest known Salem number is the {\it Lehmer number} 
$\lambda_{\rm Lehmer} = 1.17628\ldots$, the real root $>1$ of the following Salem polynomial of degree $10$:
$$X^{10} + X^9 -X^7 -X^6 -X^5 -X^4 -X^3 + X + 1\,\, .$$ 
It is conjectured that the Lehmer number is the smallest Salem number and in fact it is the smallest one in degree $\le 40$ (see eg. the webpage \cite{Mo03}). Recall that the first dynamical degree $d_1(f)$ of a smooth surface automorphism $f$ is Salem number if 
$d_1(f) > 1$ (cf. Proposition \ref{prop41}). McMullen \cite[Theorem A.1]{Mc07} shows that the Lehmer number is the smallest Salem number among the first dynamical degrees $d_1(f) \not= 1$ of smooth surface automorohisms $f$ (\cite[Theorem A.1]{Mc07}). He also shows that the Lehmer number is realized as $d_1(f)$ of a rational surface automorphism $f$ and a projective K3 surface automorphism $f$ (\cite{Mc07}, \cite{Mc16}).
\end{example}

We have the following purely lattice theoretic result (for a smooth projective surface $S$, we can apply Theorem \ref{prop41} (1) for $L = N^1(S)$): 
 
\begin{theorem}\label{prop41} Let $L = (\Z^n, (*, **))$ be a 
hyperbolic lattice of rank $n$, i.e. a free $\Z$-module of rank $n$ with an integral non-degenerate bilinear form $(*, **)$ of signature $(1, n-1)$. Let $L_{\R} = L \otimes \R$ and let ${\rm O}(L)$ (resp. ${\rm O}(L_{\R})$) be the orthogonal groups of the hyperbolic lattice $L$ (resp. of the real hyperbolic space $L_{\R}$) with respect to $(*, **)$. We also define 
$${\rm O}^{+}(L_{\R}) := \{f \in {\rm O}(L_{\R})\, |\, f(P) = P\}\,\, .$$
Here $P$ is one of the two connected components of $\{x \in L_{\R}\, |\, (x,x) > 0\}$. Then:
\begin{enumerate}
\item Let $f \in {\rm O}(L) \cap {\rm O}^{+}(L_{\R})$. Then the characteristic polynomial $\Phi_f(x) \in \Z[x]$ of $f$ is the product of cyclotomic polynomials (possibly empty) and at most one Salem polynomial (possibly empty), counted with multiplicities.  
\item Assume that the rank $n$ of $L$ is even. Let $G$ be a subgroup of ${\rm O}(L)$. Assume that the action of $G$ on $L_{\R}$ is irreducible, i.e., there is no $G$-stable linear subspace of $L_{\R}$ other than $\{0\}$ and $L_{\R}$. Then, there is $f \in G$ whose characteristic polynomial $\Phi_f(x)$ is a Salem polynomail of degree $n$. 
\end{enumerate}
\end{theorem}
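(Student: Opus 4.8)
The plan is to prove the two assertions by a spectral analysis of $f$ acting on $L_{\R}$, using the signature hypothesis and Galois theory. For part (1), let $\Phi_f(x) \in \Z[x]$ be the characteristic polynomial of $f \in {\rm O}(L)$; it is monic of degree $n$, and since $f$ preserves the nondegenerate form $(*,**)$, its set of eigenvalues (with multiplicity) is stable under $\lambda \mapsto 1/\lambda$, so $\Phi_f$ is reciprocal: $x^n\Phi_f(1/x) = \Phi_f(x)$. Decompose $\Phi_f = \prod_i P_i^{m_i}$ into distinct monic irreducible factors over $\Z$. The key point is to show that at most one $P_i$ has a root off the unit circle, and that such a $P_i$ must be a Salem polynomial. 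The Kronecker-type input is: an irreducible reciprocal $P_i \in \Z[x]$ all of whose roots lie on $S^1$ is a cyclotomic polynomial (all conjugates being algebraic integers of absolute value $1$ forces roots of unity). So it remains to bound the number of factors with a root outside $S^1$. Here I would use the signature: the form restricted to the (generalized) eigenspace of eigenvalue $\lambda$ with $|\lambda| \neq 1$ pairs it perfectly with the eigenspace of $1/\bar\lambda$, and on the real subspace these eigenspaces span a hyperbolic sublattice of $L_{\R}$ of signature $(k,k)$ for some $k \ge 1$; since $L_{\R}$ has signature $(1,n-1)$, only one positive direction is available, forcing $k = 1$ and showing there is exactly one irreducible factor $S$ (if any) with a root off $S^1$, appearing with multiplicity one. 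One then checks $S$ has exactly one real root $>1$, one real root in $(0,1)$, and all other roots on $S^1$, i.e. $S$ is Salem. The care needed is to handle the eigenvalues $\pm 1$ correctly (they contribute to the cyclotomic part) and to make the signature-counting argument rigorous when eigenvalues are non-semisimple; a clean way is to pass to the orthogonal decomposition of $L_{\R}$ into the $+1$-generalized eigenspace, the $-1$-generalized eigenspace, the sum of eigenspaces for $\lambda \in S^1 \setminus \{\pm1\}$, and the sum for $|\lambda| \neq 1$, each of which is $f$-stable and nondegenerate, and to observe that the first and third are automatically of signature $(a,b)$ with the correct total while the last, if nonzero, is hyperbolic of type $(k,k)$.

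For part (2), assume $n$ is even and $G \le {\rm O}(L)$ acts irreducibly on $L_{\R}$. The strategy is: first show that \emph{some} $f \in G$ has infinite order and is not quasi-unipotent (equivalently has an eigenvalue off $S^1$), then apply part (1) to conclude $\Phi_f$ is a product of cyclotomics and one Salem polynomial $S$, and finally upgrade irreducibility of the $G$-action to force $\deg S = n$, i.e. $\Phi_f = S$. For the first step: if every element of $G$ were quasi-unipotent, then (after passing to a finite-index subgroup, or by Burnside/Jordan-type arguments) the image of $G$ in ${\rm GL}(L_{\R})$ would be virtually unipotent, hence would preserve a proper flag, contradicting irreducibility — more concretely, the subspace $\ker(f-1)$ for a unipotent $f \neq 1$, intersected over $G$-conjugates, or the span of such, would be $G$-stable and proper. (Alternatively, a bounded infinite subgroup of ${\rm O}(L)$ is finite, and a finite group acting on an even-rank lattice with a nonzero fixed or anti-invariant vector cannot act irreducibly over $\R$ unless $n \le 2$; the genuinely hard case $n=2$ is handled directly, since an irreducible $G \le {\rm O}_{1,1}(\R)$ contains a hyperbolic element whose eigenvalue is a real quadratic unit $>1$, a Salem number of degree $2$ by our conventions.) For the upgrade step: given $f$ with $\Phi_f = (\text{cyclotomic}) \cdot S$, the characteristic subspace $V_S \subseteq L_{\R}$ attached to the Salem factor is $f$-stable and nonzero but in general not $G$-stable; however, the span $\sum_{g \in G} g(V_S)$ is $G$-stable, hence equals $L_{\R}$ by irreducibility. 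If $\dim V_S < n$ one then produces, by an averaging or genericity argument within $G$, an element whose Salem factor has strictly larger degree — iterating and using that degrees are bounded by $n$ yields an element with $\dim V_S = n$, i.e. $\Phi_f$ Salem of degree $n$. Making this iteration precise is the crux: the clean route is to note that the eigenvalue $>1$ of the Salem factor of $g$ is exactly $d_1$-type data, namely the spectral radius $\rho(g|_{L_{\R}})$, which is submultiplicative-ish along the group; picking $g_1, g_2 \in G$ whose dominant eigenlines are distinct (possible since the $G$-orbit of any line spans $L_{\R}$), a suitable word $g_1^{N}g_2^{N}$ has a Salem factor whose degree strictly exceeds $\max(\deg S_{g_1}, \deg S_{g_2})$ unless it was already $n$, by a ping-pong/Tits-alternative-flavored estimate on the limit behavior of $(g_1^N g_2^N)^m$.

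I expect the main obstacle to be the last step of part (2): passing from "there exists an element with an eigenvalue off the unit circle" to "there exists an element whose \emph{entire} characteristic polynomial is a single Salem polynomial of degree $n$." The eigenspace decomposition is not canonical across different group elements, so one cannot simply take a single $f$ and read off the answer; one must genuinely exploit irreducibility of the full $G$-action through a dynamical or ping-pong argument to enlarge the Salem factor. A secondary technical nuisance is the low-rank and torsion bookkeeping — ensuring that the "cyclotomic part" of the chosen element is actually trivial, which is where $n$ being even and the irreducibility are both used (an odd-degree reciprocal polynomial always has a root at $\pm 1$, so a degree-$n$ Salem polynomial forces $n$ even; conversely irreducibility prevents a nontrivial cyclotomic factor from splitting off a $G$-stable summand only after the enlargement argument has been run). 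I would organize the write-up so that part (1) is a self-contained linear-algebra/Galois lemma, and part (2) cites part (1) as a black box and concentrates entirely on the group-dynamics enlargement.
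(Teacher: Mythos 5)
Your part (1) is essentially the standard argument (reciprocity of $\Phi_f$, Kronecker's theorem, and the signature count showing that the eigenvalues off $S^1$ span a nondegenerate $f$-stable subspace of signature $(k,k)$ with $k\le 1$); the paper does not reprove this but cites \cite[Proposition 2.5]{Og06}, so there is nothing to compare there. For part (2), however, your plan has a genuine gap exactly at the step you yourself flag as the crux: passing from ``some $f\in G$ has a Salem factor'' to ``some $f\in G$ has characteristic polynomial equal to a single Salem polynomial of degree $n$.'' The claim that for $g_1,g_2$ with distinct dominant eigenlines a word $g_1^{N}g_2^{N}$ has a Salem factor of strictly larger degree is asserted, not proved, and is not a standard fact; the degree of the Salem factor of a product is not controlled by those of the factors, and the promised ``averaging or genericity argument within $G$'' is never specified. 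Your step 1 is also shaky: a subgroup of ${\rm O}(L)$ all of whose elements are quasi-unipotent need not be virtually unipotent in any useful sense (it could consist of elliptic and parabolic isometries of the associated hyperbolic space), and your parenthetical alternative only disposes of the bounded case.

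The paper sidesteps all of this with one structural input: by Benoist--de la Harpe \cite[Proposition 1]{BH04}, the Zariski closure of an irreducible subgroup $G\subset {\rm O}(L_{\R})$ is ${\rm SO}(L_{\R})$ or ${\rm O}(L_{\R})$. Since $g^2\in {\rm SO}^{+}(L_{\R})$ for every $g\in {\rm O}(L)$, part (1) applies to $g^2$, and $\Phi_{g^2}$ fails to be a Salem polynomial of degree $n$ only if it vanishes at some root of unity, necessarily of order bounded in terms of $n$ because its cyclotomic factors have degree at most $n$; this is a finite union of proper Zariski-closed conditions on $g$ (evenness of $n$ enters here, since a reciprocal polynomial of odd degree always vanishes at $\pm 1$). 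Zariski density of $G$ then produces the desired element at once --- this is the argument of \cite[Theorem 4.1]{EOY16} that the paper invokes for the map $g\mapsto \Phi_{g^2}$. Replacing your ping-pong enlargement by this Zariski-density argument is the fix; as written, your part (2) is not a proof.
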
    
\begin{proof} The assertion (1) is well-known, see eg. \cite[Proposition 2.5]{Og06} for a self-contained proof. 

We shall prove the assertion (2). Note that ${\rm O}(L_{\R})$ is a real algebraic group and the special real orthogonal group ${\rm SO}(L_{\R})$ is an algebraic subgroup of ${\rm O}(L_{\R})$ of index two. Thus by \cite[Proposition 1]{BH04}, the Zariski closure of $G$ in ${\rm O}(L_{\R})$ 
is either ${\rm SO}(L_{\R})$ or ${\rm O}(L_{\R})$. We also note that 
$g^2 \in {\rm SO}^{+}(L_{\R}) := {\rm O}^{+}(L_{\R}) \cap {\rm SO}(L_{\R})$ for $g \in O(L_{\R})$. Let $P_n \simeq \R^n$ be the real affine space consisting of the real monic polynomial of degree $n$. Then, as $n$ is even, the same proof in \cite[Theorem 4.1]{EOY16} applied for the real algebraic morphism 
$$G \to P_n\,\, ;\,\, g \mapsto \Phi_{g^2}(x)$$
concludes that there is $f \in G$ such that the characteristic polynomail $\Phi_{f^2}(x)$ of $f^2$ is a Salem polynomial of degree $n$. 
\end{proof}

\section{Abelian varieties with primitive automorphisms of positive topological entropy and a few applications.}

Our main results of this section are Theorems \ref{thm31}, \ref{thm32} and 
Corollaries \ref{cor31}, \ref{cor32}, from which Theorem \ref{thm2} (1) 
follows. 

Pisot numbers, or more precisely Pisot units, play important roles in primitive automorphisms of abelian varieties:

\begin{theorem}\label{thm32}
Let $d$ be an integer such that $d \ge 2$. Let $A$ be a $d$-dimensional abelian variety and $f \in {\rm Aut}\, (A)$. Assume that $f^*|H^0(A, \Omega_A^1)$ has a Pisot number $\alpha >1$ of degree $d$ as its eigenvalue. Then $\alpha$ is a Pisot unit and $f$ is a primitive automorphism of $A$ of positive topological entropy $h_{\rm top}(f) = 2 \log \alpha > 0$.  
\end{theorem}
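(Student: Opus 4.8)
The plan is to separate the statement into three assertions—(i) $\alpha$ is a Pisot unit, (ii) $h_{\mathrm{top}}(f) = 2\log\alpha > 0$, and (iii) $f$ is primitive—and treat them in that order, since (ii) and (iii) both build on the eigenvalue analysis used for (i). First I would analyze $\varphi := f^*|H^0(A,\Omega_A^1)$, which is a $\CC$-linear automorphism of the $d$-dimensional space $V := H^0(A,\Omega_A^1)$ preserving the integral structure coming from $H^1(A,\ZZ) \cong \ZZ^{2d}$. Concretely, $A = \CC^d/\Lambda$ with $\Lambda \cong \ZZ^{2d}$, and $f$ is induced by a $\CC$-linear map $F : \CC^d \to \CC^d$ preserving $\Lambda$; the transpose action on holomorphic $1$-forms is (up to conventions) $\varphi$, and its matrix with respect to a basis of $\Lambda$ lies in $\mathrm{GL}_{2d}(\ZZ)$. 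Hence the characteristic polynomial of the integral $2d\times 2d$ matrix is the product $\chi(x)\overline{\chi}(x)$, where $\chi(x) \in \ZZ[x]$ is the degree-$d$ characteristic polynomial of $\varphi$ and $\overline{\chi}$ is its complex conjugate; in particular $\chi(x)$ has algebraic-integer coefficients, and since $\alpha$ is an eigenvalue of $\varphi$ of degree $d$ over $\QQ$, the minimal polynomial of $\alpha$ must equal $\chi(x)$ up to sign, so $\chi$ is monic in $\ZZ[x]$ and its roots are exactly the Galois conjugates of $\alpha$.

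For (i): the eigenvalues of the full integral $2d\times 2d$ matrix $f^*|H^1(A,\ZZ)$ are the $d$ roots of $\chi$ together with their complex conjugates, and their product is $\pm 1$ because the matrix is in $\mathrm{GL}_{2d}(\ZZ)$. Since $\alpha$ is a Pisot number of degree $d$, its $d-1$ other conjugates have absolute value $<1$, so $|\prod_{j=1}^d \alpha_j| = \alpha \cdot \prod_{j\neq 1}|\alpha_j|$; but $\prod_{j=1}^d \alpha_j = \pm N$ where $N = \pm\chi(0) \in \ZZ$, and $\prod_{j}|\alpha_j|^2 = |\det(f^*|H^1)| = 1$ forces $\big(\prod_j |\alpha_j|\big)^2 = 1$, hence $\prod_{j=1}^d \alpha_j = \pm 1$, which is exactly the definition of a Pisot unit. (One can phrase this more cleanly: $\chi(0)\overline{\chi(0)} = \det(f^*|H^1(A,\ZZ)) = \pm 1$ with $\chi(0) \in \ZZ$, so $\chi(0) = \pm 1$.)

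For (ii): the cohomology ring $H^*(A,\CC) = \bigwedge^\bullet H^1(A,\CC)$, and $H^1(A,\CC) = V \oplus \overline{V}$ where the eigenvalues of $f^*$ on $V$ are $\alpha_1 = \alpha, \dots, \alpha_d$ and on $\overline{V}$ are $\overline{\alpha_1}, \dots, \overline{\alpha_d}$. By Theorem \ref{thm21}, $h_{\mathrm{top}}(f) = \log r(f)$ where $r(f)$ is the spectral radius of $f^*$ on $\bigoplus_p H^p(A,\ZZ)$; an eigenvalue of $f^*$ on $\bigwedge^p H^1$ is a product of $p$ of the $2d$ numbers $\alpha_j, \overline{\alpha_j}$, and since $|\alpha_1| = \alpha > 1 > |\alpha_j|$ for $j \geq 2$, the largest modulus of such a product is $|\alpha_1 \overline{\alpha_1}| = \alpha^2$, achieved on $\bigwedge^2 H^1 \subset H^2$. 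Hence $r(f) = \alpha^2$ and $h_{\mathrm{top}}(f) = 2\log\alpha > 0$. Equivalently $d_1(f) = \alpha^2$.

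For (iii): suppose $f$ were imprimitive, witnessed by $\pi : A \dashrightarrow B$ with $0 < \dim B < d$ and $f_B \in \mathrm{Bir}(B)$ with $\pi \circ f = f_B \circ \pi$. By Remark \ref{rem21} I may replace $A$ by a resolution so that $\pi$ becomes an $f$-equivariant surjective morphism without changing any $d_p$; but in fact for abelian varieties one can do better—up to translation $\pi$ factors through a quotient by an abelian subvariety, so I would argue (or simply invoke the Stein-factorization/Albanese formalism) that there is an $f$-equivariant surjective morphism of smooth projective varieties $\pi : A \to B$ with $0 < \dim B < d$. Then Corollary \ref{cor21}(2) applies only if I know $d_1(f) > d_2(f)$, which is false here ($d_1 = \alpha^2$ and $d_2$ can be larger), so instead I would use the product formula, Theorem \ref{thm23}, directly on $d_1(f)$: pulling back a general-fiber class, $d_1(f_B)$ and $d_1(f|\pi)$ are each realized on subvector spaces of $H^1$ stable under $f^*$, and each of these two numbers is of the form (modulus of a product of some of the $\alpha_j, \overline{\alpha_j}$) $\geq 1$. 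The key point is that since $\alpha$ has degree exactly $d$ over $\QQ$, the subspace $V \subset H^1(A,\CC)$ is $f^*$-irreducible as a $\QQ$-structure: the $\QQ[f^*]$-span of any eigenvector already has dimension $d$ over $\QQ$. Hence in the splitting of $H^1(A,\QQ)$ induced by $\pi$ (the pullback $\pi^* H^1(B,\QQ)$ is an $f^*$-stable proper nonzero subspace), the eigenvalue $\alpha$ cannot appear in the "$B$-part" unless the whole degree-$d$ Galois orbit appears there, and by dimension count that orbit cannot fit inside a subspace coming from $B$ together with its role in the fiber part simultaneously in a way compatible with $0 < \dim B < d$. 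Making this precise—tracking exactly which of the $2d$ eigenvalues land in $\pi^*H^1(B)$ versus the "fiber" complement, and concluding that $d_1(f) = \alpha^2$ forces both $\alpha$ and $\overline{\alpha}$ to lie in the same one of the two pieces while their conjugates are split between them, contradicting irreducibility of the degree-$d$ orbit over $\QQ$—is the main obstacle. I expect the cleanest route is: $d_1(f) = \max_j d_j(f_B)d_{1-j}(f|\pi)$ over $j \in \{0,1\}$, so $\alpha^2$ equals either $d_1(f_B)\cdot 1$ or $1 \cdot d_1(f|\pi)$; in the first case $\alpha^2 = d_1(f_B) = $ spectral radius of $f_B^*$ on $H^{1,1}$-type classes of $B$, forcing $\alpha^2$ to be an eigenvalue-product built from the $\leq 2\dim B < 2d$ eigenvalues of $f^*$ on $\pi^*H^1(B)$, hence the big eigenvalues $\alpha, \overline\alpha$ both lie in $\pi^*H^1(B)$; since $\pi^*H^1(B,\QQ)$ is $\QQ$-rational and $f^*$-stable it then contains the full $\QQ$-span of $\alpha$, which has dimension $d$ over $\QQ$, plus the conjugate span, totalling all $2d$ dimensions, so $\dim B = d$, a contradiction; the second case is symmetric with the fiber. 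This finishes primitivity.
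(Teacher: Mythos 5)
Your parts (i) and (ii) are correct and essentially identical to the paper's argument: the Hodge decomposition $H^1(A,\C)=V\oplus\overline V$ gives eigenvalues $\alpha_j,\overline{\alpha}_j$, integrality of $f^*|H^1(A,\Z)$ forces $\prod_j\alpha_j=\pm1$, and the wedge structure of $H^*(A)$ plus Theorem \ref{thm21} gives $h_{\rm top}(f)=2\log\alpha$.

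The primitivity step, however, contains a genuine error. You assert that $d_1(f)>d_2(f)$ ``is false here'' because ``$d_2$ can be larger.'' It cannot: the eigenvalues of $f^*$ on $H^{2,2}(A)=\wedge^2V\otimes\wedge^2\overline V$ are products $\alpha_{i_1}\alpha_{i_2}\overline{\alpha}_{j_1}\overline{\alpha}_{j_2}$ with $i_1<i_2$, $j_1<j_2$, so $d_2(f)=|\alpha_1|^2|\alpha_2|^2=\alpha^2|\alpha_2|^2<\alpha^2=d_1(f)$, since $d\ge 2$ and the Pisot condition gives $|\alpha_2|<1$ --- a fact you yourself use in part (ii). This strict inequality is the entire point of working with Pisot numbers, and it makes primitivity immediate from Corollary \ref{cor21}(2); that is exactly the paper's proof. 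Your replacement argument is therefore an unnecessary detour, and as written it is also incomplete: in Case 1 the conclusion that $\pi^*H^1(B,\Q)$ must have dimension $2d$ does not follow merely from it being a rational $f^*$-stable subspace whose complexification contains an $\alpha$-eigenvector (a priori such a subspace could have dimension $d$, giving $\dim B=d/2$; ruling this out requires tracking how the real root $\alpha$ distributes between $W_\C\cap V$ and $W_\C\cap\overline V$, which you do not do); Case 2 is dismissed as ``symmetric'' although the relative dynamical degree lives on the fibers and is not controlled by $\pi^*H^1(B)$ in the same way; and the reduction to a fibration by abelian subvarieties with $f_B$ biregular is invoked but not established. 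None of this machinery is needed once you notice $d_1(f)>d_2(f)$.
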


\begin{proof} By the Hodge decomposition theorem, we have
$$H^1(A, \Z) \otimes_{\Z} \C = H^1(A, \C) = H^0(A, \Omega_A^1) \oplus \overline{H^0(A, \Omega_A^1)}\,\, .$$
This decomposition is compatible with the action of $f^*$. Thus 
the eigenvalues of $f^*|H^1(A, \C)$ are $\alpha_j\,\, ,\,\, \overline{\alpha}_j \,\, (1 \le j \le d)$,  
counted with multiplicities. Here $\alpha_1 := \alpha$ and $\alpha_j$ ($1 \le j \le d$) are the Galois conjugates of $\alpha$. As $f$ is an automorphism, $f^*|H^1(A, \Z)$ is invertible over $\Z$. Thus the determinant of $f^{*}|H^1(A, \Z)$, which is 
$\pm \prod_j |\alpha_j|^2$, is $\pm 1$. Hence $\prod_j \alpha_j$ is $\pm 1$ as $\prod_j \alpha_j \in \Z$. This shows that $\alpha$ is a Pisot unit. As $\alpha = \alpha_1$ is a positive Pisot number, we may order $\alpha_j$ so that 
$$|\alpha_1| = \alpha > 1 > |\alpha_2| \ge \ldots \ge |\alpha_d|\,\, .$$
As $A$ is an abelian variety, we have
$$H^{p, p}(A) = \wedge^{p} H^0(A, \Omega_A^1) \otimes_{\C} \wedge^{p} \overline{H^0(A, \Omega_A^1)}\,\, .$$
It follows that 
$$d_k(f) = \prod_{j=1}^{k} |\alpha_j|^2\,\, .$$
In particular, 
$$d_1(f) = |\alpha_1|^2\,\, ,\,\, d_2(f) = |\alpha_1|^2|\alpha_2|^2\,\, ,\,\, d_3(f) = |\alpha_1|^2|\alpha_2|^2|\alpha_3|^2\,\, ,\,\, \ldots$$
and therefore, $h_{\rm top}(f) = 2 \log \alpha > 0$ by $\alpha = |\alpha_1| > 1$ and $|\alpha_j| < 1$ for $j \ge 2$ and by Theorem \ref{thm21}. 

As $|\alpha_2| < 1$, it follows that $d_1(f) > d_2(f)$.   
Hence $f$ is primitive by Corollary \ref{cor21} (2). This completes the proof of Theorem \ref{thm32}.
\end{proof}

The first part of Theorem \ref{thm2} (1) follows from the next theorem:

\begin{theorem}\label{thm33}
Let $d$ be an integer such that $d \ge 2$. Let $\alpha >1$ be any Pisot unit of degree $d$, whose existence is guaranteed by Theorem \ref{thm31}. Let $E$ be any elliptic curve. Then the $d$-dimensional abelian variety $E^d$ admits a primitive automorphism $f \in {\rm Aut}_{{\rm group}}\, (E^d)$ with
$$h_{\rm top}(f) = 2 \log \alpha > 0\,\, .$$
\end{theorem}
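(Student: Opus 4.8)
The plan is to produce $f$ as a group automorphism of $E^d$ whose analytic representation on $H^0(E^d,\Omega^1)\simeq \C^d$ has characteristic polynomial equal to the minimal polynomial of the given Pisot unit $\alpha$, and then invoke Theorem \ref{thm32}. Concretely, write $E = \C/(\Z + \tau\Z)$ and recall that $\mathrm{Aut}_{\mathrm{group}}(E^d)$ contains $\mathrm{GL}_d(\Z)$ acting by $A\cdot(z_1,\dots,z_d)^{\mathsf t} = A(z_1,\dots,z_d)^{\mathsf t}$, i.e. as the obvious linear action on $(\C/(\Z+\tau\Z))^d$; this is well-defined precisely because the matrix entries are integers, and it is invertible over $\Z$ iff $A\in\mathrm{GL}_d(\Z)$. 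So it suffices to exhibit a matrix $A\in\mathrm{GL}_d(\Z)$ whose characteristic polynomial is the minimal polynomial $P(X)$ of $\alpha$.

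First I would recall that, since $\alpha$ is a Pisot \emph{unit} of degree $d$, its minimal polynomial $P(X)\in\Z[X]$ is monic of degree $d$ with constant term $P(0) = (-1)^d\prod_{j=1}^d\alpha_j = \pm 1$. Therefore the companion matrix $C_{P}\in M_d(\Z)$ of $P(X)$ has determinant $\pm 1$, hence $C_P\in\mathrm{GL}_d(\Z)$, and its characteristic polynomial is exactly $P(X)$. Set $f$ to be the group automorphism of $E^d$ induced by $A := C_P$. Then $f^*|H^0(E^d,\Omega^1_{E^d})$ is (up to the identification of $H^0(E^d,\Omega^1)$ with $\C^d$ via $dz_1,\dots,dz_d$) the transpose — or the matrix itself, depending on conventions — of $A$, so its eigenvalues are the Galois conjugates $\alpha_1=\alpha,\alpha_2,\dots,\alpha_d$ of $\alpha$. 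In particular $\alpha$, a Pisot number of degree $d$, occurs as an eigenvalue of $f^*|H^0(E^d,\Omega^1_{E^d})$.

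With $f$ so constructed, Theorem \ref{thm32} applies verbatim: it tells us that $\alpha$ is a Pisot unit (which we already knew) and that $f$ is a primitive automorphism of $E^d$ with $h_{\mathrm{top}}(f) = 2\log\alpha > 0$. This gives exactly the assertion of Theorem \ref{thm33}, with the bonus that $f$ can be taken in $\mathrm{Aut}_{\mathrm{group}}(E^d)$ as claimed.

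The only genuinely substantive point — and the one I would be careful about — is the identification $\mathrm{GL}_d(\Z)\hookrightarrow \mathrm{Aut}_{\mathrm{group}}(E^d)$ together with the computation of the induced action on holomorphic $1$-forms: one must check that an integer matrix $A$ genuinely descends to a holomorphic endomorphism of $(\C/\Lambda)^d$ (because it preserves the product lattice $\Lambda^{\oplus d}$), that it is an automorphism iff $A\in\mathrm{GL}_d(\Z)$, and that the pullback on $H^0$ of $1$-forms is the linear map with matrix ${}^{\mathsf t}A$ (or $A$), so that its eigenvalues are the roots of $P(X)$. This is routine but is where all the content beyond Theorem \ref{thm32} sits; everything else is the elementary observation that a Pisot unit's minimal polynomial has unit constant term, so its companion matrix lands in $\mathrm{GL}_d(\Z)$.
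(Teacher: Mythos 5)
Your proposal is correct and is essentially identical to the paper's own proof: both take the companion matrix of the minimal polynomial of $\alpha$ (which lies in ${\rm GL}_d(\Z)$ because the constant term is $\pm 1$), let it act linearly on $(\C/\Lambda)^d$ to define $f$, observe that $f^*|H^0(E^d,\Omega^1_{E^d})$ then has $\alpha$ as an eigenvalue, and conclude by Theorem \ref{thm32}. The transpose-versus-matrix ambiguity you flag is harmless since it does not change the characteristic polynomial.
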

\begin{proof}
Let $S_d(X) = X^d + a_dX^{d-1} + \ldots + a_2X + a_1 \in \Z[X]$
be the minimal polynomial of $\alpha$. Note that $a_1 = \pm 1$ as $\alpha$ is a Pisot unit. Consider the matrix $M_d = (m_{ij}) \in M_d(\Z)$ whose entries $m_{ij}$ are $0$ except 
$$m_{dj} = -a_j\,\, ,\,\, m_{i, i+1} = 1\,\, (1 \le j \le d\,\, ,\,\, 1 \le i \le d-1)\,\, ,$$
associated to the polynomial $S_d(X)$. 
For instance
$$M_2 = \left(\begin{array}{rr}
0 & 1\\
-a_1 & -a_2\\
\end{array} \right)\,\, ,\,\, M_3 = \left(\begin{array}{rrr}
0 & 1 & 0\\
0 & 0  & 1\\
-a_1 & -a_2 & -a_3\\
\end{array} \right)\,\, , \,\, M_4 = \left(\begin{array}{rrrr}
0 & 1 & 0 & 0\\
0 & 0  & 1 & 0\\
0 & 0 & 0 & 1\\
-a_1 & -a_2 & -a_3 & -a_4\\
\end{array} \right)\,\, .$$
By definition of $M_d$, the characteristic polynomial of $M_d$ is $S_d(X)$ and, as $a_1 = \pm 1$, we have $M_d \in {\rm GL}_d(\Z)$. 
Let $(z_1, z_2, \ldots, z_d)$ be the standard complex coordinates of the universal cover $\C^d$ of $E^d$. Then, as $M_d \in {\rm GL}_d(\Z)$, we can uniquely define the group automorphism $f \in {\rm Aut}_{\rm group}(E^d)$ by 
$$f^*(z_1, z_2, \ldots, z_d)^t = M_d(z_1, z_2, \ldots, z_d)^{t}\,\, .$$ 
Here $^t$ is the transpose. Then 
$f^*| H^0(E^d, \Omega_{E^d}^1) = M_d$ under the basis $\langle dz_i \rangle_{i=1}^{d}$ of $H^0(E^d, \Omega_{E^d}^1)$. The characteristic polynomial of $f^*|H^0(E^d, \Omega_{E^d}^1)$ is then $S_d(X)$, the minimal polynomial of Pisot unit $\alpha$ of degree $d$. The result now follows from Theorem \ref{thm32}. 
\end{proof}

The second part of Theorem \ref{thm2} (1) follows from the following: 

\begin{corollary}\label{cor31}
Let $d$ be an integer such that $d \ge 3$. Let $\alpha >1$ be any Pisot unit of degree $d$, whose existence is guaranteed by Theorem \ref{thm31}. Then there is a $d$-dimensional smooth projective variety $M$, birational to a minimal Calabi-Yau variety, such that $M$ admits a primitive automorphism $f$ with $h_{\rm top}(f) = 2 \log \alpha > 0$. 
\end{corollary}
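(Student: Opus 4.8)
The plan is to obtain $M$ as an equivariant resolution of the quotient $Y := A/\langle -1_A\rangle$, where $A = E^d$ and $f \in \mathrm{Aut}_{\mathrm{group}}(E^d)$ is the automorphism constructed in Theorem~\ref{thm33}. Being a group automorphism, $f$ commutes with $-1_A$, so it induces $\bar f \in \mathrm{Aut}(Y)$. The first step is to check that, for $d \ge 3$, the variety $Y$ is itself a \emph{minimal Calabi--Yau variety} in the sense of Theorem~\ref{thm1}: it is $\mathbb Q$-factorial as a finite quotient of a smooth projective variety; its only singularities are the finitely many images of the $2$-torsion points of $A$, each analytically a cyclic quotient singularity of type $\tfrac12(1,\dots,1)$ with $d$ entries, which is terminal precisely because its age $d/2$ is $>1$, i.e. because $d \ge 3$; and $\mathcal O_Y(2K_Y) \simeq \mathcal O_Y$ because $-1_A$ acts on $\omega_A = \mathbb C\, dz_1\wedge\cdots\wedge dz_d$ by $(-1)^d$. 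This is the only place the hypothesis $d \ge 3$ is needed.

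Next I would take $M$ to be the blow-up of $Y$ along its reduced singular locus $\mathrm{Sing}(Y)$. Each local model is the blow-up at the vertex of the affine cone $\mathbb C^d/\{\pm 1\}$ over the $2$-uple Veronese embedding of $\mathbb P^{d-1}$; this blow-up is the total space of $\mathcal O_{\mathbb P^{d-1}}(-2)$, hence smooth, so $M$ is a smooth projective variety birational to the minimal Calabi--Yau variety $Y$. Since $\mathrm{Sing}(Y)$ is $\bar f$-invariant, $\bar f$ lifts to an automorphism $g \in \mathrm{Aut}(M)$; this $g$ is the automorphism claimed by the corollary.

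It remains to compute $h_{\mathrm{top}}(g)$ and to verify primitivity, which I would reduce to the corresponding facts for $f$. From the proof of Theorem~\ref{thm32} one has $d_k(f) = \prod_{j=1}^k |\alpha_j|^2$ with $\alpha = |\alpha_1| > 1 > |\alpha_2| \ge \cdots \ge |\alpha_d| > 0$, so $\max_{0 \le k \le d} d_k(f) = d_1(f) = \alpha^2$ and $d_1(f) > d_2(f)$. To transport this to $g$, consider $\widetilde A := \mathrm{Bl}_{A[2]} A$: both $-1_A$ and $f$ lift to $\widetilde A$, to a commuting involution $\tilde\iota$ and an automorphism $\tilde f$, and since $\tilde\iota$ acts trivially on each exceptional $\mathbb P^{d-1}$, the quotient $\widetilde A/\langle\tilde\iota\rangle$ is smooth and canonically identified with $M$. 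The quotient morphism $\rho : \widetilde A \to M$ is then generically finite, surjective and $\tilde f$-equivariant, while the blow-down $\widetilde A \to A$ is a birational morphism of smooth projective varieties conjugating $\tilde f$ to $f$. Hence $d_p(\tilde f) = d_p(f)$ by Theorem~\ref{thm22}, and $d_p(g) = d_p(\tilde f)$ by Corollary~\ref{cor21}(1), so $d_p(g) = d_p(f)$ for all $p$. Consequently $h_{\mathrm{top}}(g) = \log\max_{0\le p\le d} d_p(g) = \log \alpha^2 = 2\log\alpha > 0$ by Theorem~\ref{thm21}, and $d_1(g) = \alpha^2 > \alpha^2|\alpha_2|^2 = d_2(g)$, so $g$ is primitive by Corollary~\ref{cor21}(2).

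I expect the one genuinely delicate point to be arranging simultaneously that (i) the automorphism descends to a \emph{smooth} model and (ii) \emph{all} the dynamical degrees are preserved under passing to the $\langle -1\rangle$-quotient and resolving; this is exactly what the explicit equivariant blow-ups $\widetilde A$ and $M$, together with Corollary~\ref{cor21}(1), take care of. The remaining pieces --- the Reid--Tai terminality computation, the identification of the local resolution with $\mathrm{Tot}\,\mathcal O_{\mathbb P^{d-1}}(-2)$, and the numerology of $d_k(f)$ --- are routine.
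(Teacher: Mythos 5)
Your proposal is correct and follows essentially the same route as the paper: blow up the singular points of $\overline{M}=E^{d}/\langle -1_{E^{d}}\rangle$, descend $f$ by the universal property of the blow-up, and identify all dynamical degrees via the generically finite equivariant comparison together with Corollary \ref{cor21}(1) and Theorem \ref{thm21}; your extra verifications (Reid--Tai terminality of the $\tfrac12(1,\dots,1)$ points for $d\ge 3$, the local model ${\rm Tot}\,\mathcal O_{\mathbb P^{d-1}}(-2)$, the equivariant blow-up $\widetilde A={\rm Bl}_{A[2]}A$) only make explicit what the paper asserts. The single deviation is at the last step, where you conclude primitivity from $d_1(g)>d_2(g)$ via Corollary \ref{cor21}(2) instead of invoking the paper's Lemma \ref{lem32}; both are immediate given $d_p(g)=d_p(f)$, so this is a harmless variant.
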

\begin{proof}
Let $M$ be the blow up at the maximal ideals of the singular points of the quotient variety $\overline{M} := E^d/\langle -1_{E^d}\rangle$. Then $\overline{M}$ is a minimal Calabi-Yau variety as $d \ge 3$, and $M$ is a smooth projective variety birational to $\overline{M}$. The automorphism $f \in {\rm Aut}_{{\rm group}}\, (E^d)$ in Theorem \ref{thm32} descends to an automorphism $f_M \in {\rm Aut}\, (M)$ of $M$ as $f\circ (-1_{E^d}) = (-1_{E^d}) \circ f$ and by the universality of the blow up. By Corollary \ref{cor21} (1), $d_p(f_M) = d_{p}(f)$ for all $p$. Thus, by Theorem \ref{thm21}, $h_{\rm top}(f_M) = h_{{\rm top}}(f)$, which is $2\log \alpha$ by Theorem \ref{thm33}. $f_M$ is also primitive by Lemma \ref{lem32} below.
\end{proof}

\begin{lemma}\label{lem32}
Let $U$ and $V$ be smooth projective varieties of the same dimension $d$ 
and $\mu : U \dasharrow V$ a dominant rational map (necessarily generically finite). Let $f_U \in {\rm Bir}\,(U)$ and $f_V \in {\rm Bir}\,(V)$. Then, $f_V$ is primitive if $f_U$ is primitive and $\mu \circ f_U = f_V \circ \mu$. 
\end{lemma}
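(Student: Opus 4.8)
The plan is to argue by contradiction: assume $f_V$ is imprimitive and produce an imprimitive structure for $f_U$, contradicting the hypothesis. So suppose there is a dominant rational map $\pi_V : V \dashrightarrow B$ with connected fibers, $0 < \dim B < d$, and $g_B \in \mathrm{Bir}(B)$ with $\pi_V \circ f_V = g_B \circ \pi_V$. The natural candidate for the fibration on $U$ is the composite $\pi_U := \pi_V \circ \mu : U \dashrightarrow B$. Since $\mu$ is dominant and $\pi_V$ is dominant, $\pi_U$ is dominant, and $0 < \dim B < d = \dim U$, so the dimension constraint in Definition \ref{def1} is satisfied. The equivariance is immediate from the two given intertwining relations:
\[
\pi_U \circ f_U = \pi_V \circ \mu \circ f_U = \pi_V \circ f_V \circ \mu = g_B \circ \pi_V \circ \mu = g_B \circ \pi_U ,
\]
so $\pi_U$ is $f_U$-equivariant with the same $g_B$ on the base.

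The one point that needs care — and the only real obstacle — is that Definition \ref{def1} requires the fibration exhibiting imprimitivity to have \emph{connected fibers}, whereas $\pi_U = \pi_V \circ \mu$ need not: the generic fiber of $\mu$ is a finite set of $\deg\mu$ points, so the generic fiber of $\pi_U$ is a (possibly disconnected) finite cover of the generic fiber of $\pi_V$. To fix this I would pass to the Stein factorization (over the function field, i.e.\ the normalization of $B$ in the field $\overline{k(U)} \supseteq \overline{k(B)}$): write $\pi_U = \rho \circ \pi_U'$, where $\pi_U' : U \dashrightarrow B'$ has connected fibers and $\rho : B' \dashrightarrow B$ is generically finite, with $B'$ smooth (resolving singularities, harmless over $\C$ as noted after Definition \ref{def1}). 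One then checks that $f_U$ descends to a birational self-map $g_{B'} \in \mathrm{Bir}(B')$ making $\pi_U'$ equivariant: this is the standard functoriality of Stein factorization — $f_U$ permutes the connected components of the fibers of $\pi_U$ compatibly with $g_B$, hence acts birationally on $B'$. Finally $\dim B' = \dim B$ satisfies $0 < \dim B' < d$, so $\pi_U'$ together with $g_{B'}$ witnesses that $f_U$ is imprimitive, the desired contradiction.

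Thus, writing out the Stein-factorization step carefully is where the content lies; the dimension bookkeeping and the equivariance chain are routine. (Alternatively, one may sidestep the connectedness issue entirely if one adopts the equivalent formulation of primitivity that drops the "connected fibers" requirement — as is implicitly used via Corollary \ref{cor21} — in which case the composite $\pi_V \circ \mu$ works directly and the lemma is immediate from the displayed equivariance computation.)
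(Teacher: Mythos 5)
Your proposal is correct and is essentially the paper's own argument: the paper likewise composes the imprimitivity fibration of $V$ with $\mu$ and passes to the Stein factorization of $\pi\circ\mu$, which has connected fibers and a base of the same dimension, to contradict (equivalently, transfer) primitivity of $f_U$. The only difference is cosmetic (contradiction versus contrapositive), and your remarks on equivariance and the descent of $f_U$ to the Stein base just make explicit what the paper leaves implicit.
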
 

\begin{proof}
If $\pi : V \dasharrow B$ is an $f_{V}$-equivariant dominant rational map of connected fibers, then the Stein factorization of $\pi \circ \mu : U \dasharrow B$ is an $f_U$-equivariant dominant rational map of connected fibers, with $(\dim\, B$)-dimensional base space. As $f_U$ is primitive and $\dim\, V = \dim\, U = d$, it follows $\dim\, B = 0$ or $\dim\, B = d$. Hence $f_V$ is primitive. 
\end{proof}

\begin{remark}\label{rem31}
The manifolds $M$ constructed in the proof of Corollary \ref{cor31} are not birational to a Calabi-Yau manifold nor a hyperk\"ahler manifold when $\dim M \ge 3$. The reason is as follows. Notice that $\overline{M}$ is birational to $M$, has only isolated terminal singularities with numerically trivial canonical divisor and $\pi_1(\overline{M} \setminus S)$ is an infinite group for any proper closed algebraic subset $S$ such that ${\rm Sing}\,\overline{M} \subset S$. On the other hand, if $\overline{M}$ would be birational to a Calabi-Yau manifold or a hyperk\"ahler manifold, then they would be isomorphic in codimension one (see eg. \cite{Ka08}) so that $\pi_1(\overline{M} \setminus S) = \{1\}$ for some $S$ above of codimension $\ge 2$, a contradiction. 
\end{remark}

\begin{corollary}\label{cor32}
Let $\alpha_d$ ($d =3$, $4$, $5$) be the first three smallest positive Pisot units of degree $d$ respectively (Example \ref{ex31}). 
Then the logarithm $2 \log \alpha_d$ ($d = 3$, $4$, $5$) is realized as the topological entropy of a primitive automorphism of an abelian varieties of dimension $d=3$, $4$, $5$ respectively. Moreover, $2 \log \alpha_3$ is also realized as the topological entropy of a primitive automorphism of a $3$-dimensional Calabi-Yau manifold and a $3$-dimensional smooth rational variety, and $2 \log \alpha_d$ ($d = 4$, $5$) is also realized as the topological entropy of a primitive automorphism of a $d$-dimensional smooth rationally connected variety $R_d$. 
\end{corollary}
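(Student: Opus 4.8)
The plan is to treat the three groups of assertions in increasing order of difficulty.

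\emph{Abelian varieties.} This is immediate from Theorem~\ref{thm33}: by Example~\ref{ex31}(2) each $\alpha_d$ ($d=3,4,5$) is a Pisot unit of degree exactly $d$, so for any elliptic curve $E$ the abelian variety $E^d$ carries a primitive $f\in\mathrm{Aut}_{\mathrm{group}}(E^d)$ with $h_{\mathrm{top}}(f)=2\log\alpha_d$.

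\emph{A Calabi--Yau threefold and a rational threefold realizing $2\log\alpha_3$.} For the Calabi--Yau threefold I would argue directly. Let $E$ have complex multiplication by $\mathbb{Z}[\zeta_3]$, put $A=E^3$, and let $f$ be the companion automorphism of Theorem~\ref{thm33} attached to $\alpha_3$ (the real root $>1$ of $x^3-x-1$). Since $f$ is given by an integral matrix it commutes with the scalar automorphism $\zeta_3\cdot\mathrm{id}_A$, hence descends to $A/\langle\zeta_3\rangle$, whose singular locus is a set of $27$ isolated $\tfrac13(1,1,1)$ points; these are canonical and admit a crepant resolution $M$, which one checks to be a Calabi--Yau manifold (one has $q(M)=0$, $\pi_1(M)=1$, and $\omega_M$ descends from $dz_1\wedge dz_2\wedge dz_3$). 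The automorphism $f$ lifts to $M$ via the universal property of the blow-up of $\mathrm{Sing}(A/\langle\zeta_3\rangle)$, which is a finite set preserved by $f$; its entropy is unchanged by Corollary~\ref{cor21}(1), Theorem~\ref{thm22} and Theorem~\ref{thm21}, so $h_{\mathrm{top}}(f_M)=2\log\alpha_3$; and $f_M$ is primitive, since $f$ is primitive on $A$ by Theorem~\ref{thm32}, this passes to $A/\langle\zeta_3\rangle$ by Lemma~\ref{lem32}, and primitivity of a birational self-map is a birational invariant. For the rational threefold I would invoke the explicit examples of Oguiso--Truong \cite{OT14}, \cite{OT15}, whose smooth rational threefold carries a primitive biregular automorphism of first dynamical degree $\alpha_3^2$, hence of topological entropy $2\log\alpha_3$.

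\emph{A rationally connected fourfold and fivefold realizing $2\log\alpha_d$ ($d=4,5$).} One would again like to start from $A=E^d$ with the companion automorphism $f$ of Theorem~\ref{thm33} attached to $\alpha_d$ and pass to a quotient $A/G$ by a finite $G\subset\mathrm{Aut}(A)$ normalized by $f$ which, after an $f$-equivariant resolution, becomes rationally connected; the entropy and primitivity would then transfer via Corollary~\ref{cor21}(1), Theorem~\ref{thm22} and Lemma~\ref{lem32} exactly as above. \textbf{Choosing $G$ is the main obstacle.} For $A/G$ to have Kodaira dimension $-\infty$ the group $G$ must act with quasi-reflections, i.e.\ with codimension-one fixed loci. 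But since $\alpha_d$ is Pisot of degree $d$, its dominant conjugate has absolute value $>1$ while the others have absolute value $<1$, so $M_d^t$ has no rational eigenvalue for any $t\ge 1$ (a rational eigenvalue $\lambda$ would force every conjugate of $\alpha_d$ to be a root of $x^t-\lambda$, hence to have absolute value $|\lambda|^{1/t}$); consequently — the $M_d$-orbit of a reflecting rational line would be finite, so some power of $M_d$ would preserve that line — $M_d$ normalizes no finite subgroup of $\mathrm{GL}_d(\mathbb{Z})$ (nor of $\mathrm{GL}_d(\mathcal{O}_K)$ for a CM field $K$) containing a quasi-reflection. Hence the naive quotient of $E^d$ with its Pisot automorphism cannot be rationally connected, and one is forced to realize $2\log\alpha_d$ on a rationally connected $d$-fold by a different mechanism: e.g.\ via an $f$-equivariant regularization of the birational dynamics underlying $f$ on a rational model (in the spirit of the monomial maps used in Theorem~\ref{thm2}(2)), or via a quotient of a product of lower-dimensional varieties carrying compatible automorphisms, arranged so that $\max_p d_p(f)=\alpha_d^2$. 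Should that maximum be attained at $d_k$ with $k\ge 2$ rather than at $d_1$, one also needs a primitivity criterion going beyond Corollary~\ref{cor21}(2). Establishing rational connectedness of the explicit model and computing its dynamical degrees to be the required ones is where the real work lies.
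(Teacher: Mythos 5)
Your treatment of the abelian varieties is exactly the paper's (Theorem \ref{thm33} applied to $\alpha_d$), and your Calabi--Yau threefold is the same object the paper uses: the paper takes $E_\omega=\C/(\Z+\omega\Z)$ with $\omega=(-1+\sqrt{-3})/2$ and resolves $E_\omega^3/\langle \omega I_3\rangle$, whose $27$ fixed points give crepant-resolvable $\tfrac13(1,1,1)$ singularities; your lifting and entropy/primitivity transfers via Corollary \ref{cor21}(1) and Lemma \ref{lem32} match. The rational threefold via \cite{OT15} is also what the paper does (its $R_3$ \emph{is} that example).

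The genuine gap is the rationally connected fourfold and fivefold, where you stop and, worse, argue that no quotient construction can work. Your claim that ``for $A/G$ to have Kodaira dimension $-\infty$ the group $G$ must act with quasi-reflections'' is false, and this is precisely the point you are missing. A finite cyclic group acting with only \emph{isolated} fixed points produces a quotient with numerically trivial canonical divisor whose singularities may fail to be canonical (Reid--Tai: some element of age $<1$); a resolution of such a quotient then has $K\equiv -aE+E'$ with $a>0$ over the non-canonical point, and Miyaoka--Mori forces it to be uniruled. The paper exploits exactly this: it takes $\overline{R}_d=E_\omega^d/\langle -\omega I_d\rangle$, where $-\omega$ is a primitive sixth root of unity acting as a \emph{scalar}. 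Being scalar, it commutes with the integral companion matrix $M_d$ automatically, so your normalizer obstruction is vacuous (the relevant finite group sits in the center of $\mathrm{GL}_d(\mathcal{O}_{\Q(\omega)})$, and it contains no quasi-reflection -- none is needed). For $d\le 5$ the age of $-\omega I_d$ at the origin is $d/6<1$, so $\overline{R}_d$ is klt but not canonical there; the blow-up $R_d$ of the singular points is uniruled, and then the paper closes the loop elegantly: the maximal rationally connected fibration $\pi:R_d\dasharrow B$ is $f_{R_d}$-equivariant by its uniqueness, $B$ is not uniruled by Graber--Harris--Starr, so $\dim B<\dim R_d$, and primitivity of $f_{R_d}$ (already established by Theorem \ref{thm32} and Lemma \ref{lem32}) forces $\dim B=0$, i.e.\ $R_d$ is rationally connected. (The paper also records that $R_4$ is unirational by \cite{COV15} and $R_5$ is rationally connected by \cite{KL09}.) Your entropy computation and primitivity transfer would have gone through verbatim for this $G$; the missing idea is that non-canonical \emph{isolated} quotient singularities, not quasi-reflections, are the source of negativity of the canonical class, together with the observation that primitivity itself can be used to upgrade uniruledness to rational connectedness.
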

\begin{proof}
In Theorem \ref{thm32}, we choose $E$ to be $E_{\omega} = \C/\Z + \omega\Z$ ($\omega = (-1 + \sqrt{-3})/2$, the primitive third root of $1$). Then $f_d := f \in {\rm Aut}_{{\rm group}}\, (E_{\omega}^d)$ in Theorem \ref{thm32}, associated to $\alpha_d$ ($d = 3$, $4$, $5$), satisfies the first requirement. 

Let $\mu : R_d \to \overline{R}_d$ be the blow up at the maximal ideals of the singular points of the quotient variety $\overline{R}_d := E_{\omega}^d/\langle -\omega I_d \rangle$. Then $R_d$ is a smooth projective variety. Moreover, the automorphism $f_d \in {\rm Aut}_{{\rm group}}\, (E^d)$ descends to an automorphism $f_{R_d} \in {\rm Aut}\, (R_d)$ of $R_d$ and $f_{R_d}$ is primitive of positive topological entropy $2\log \alpha_d$, exactly for the same reason as in Corollary \ref{cor31}. 

Let $V_3$ be the blow up at the maximal ideals of the singular points of the quotient variety $E_{\omega}^3/\langle \omega I_3 \rangle$. Then $V_3$ is a smooth Calabi-Yau threefold and the automorphism $f_{V_3} \in {\rm Aut}\, (V_3)$ of $V_3$ induced by $f_3$ satisfies all the required properties. 

$R_3$ is rational by \cite{OT15}, $R_4$ is unirational by \cite{COV15} and $R_5$ is rationally connected by \cite[Corollary25]{KL09}. 

{\it Here, we shall give an alternative uniform proof of rational connectedness of $R_d$ ($d=3$, $4$, $5$), using the fact that $f_{R_d} \in {\rm Aut}\,(R_d)$ is primitive.} 

By construction, $\overline{R}_d$ ($d=3$, $4$, $5$) has numerically trivial canonical divisor and only isolated singular points which are the image of the fixed points of $\langle -\omega I_3 \rangle$. Let $P \in \overline{R}_d$ be the image of the origin of $E_{\omega}^d$. As $d \le 5$, $\overline{R}_d$ is klt but not canonical at $P$. Let $E \subset R_d$ be the exceptional divisor lying over $P$. Then we have $K_{R_d} \equiv -aE + E'$ with $a >0$. Here $E'$ is a divisor whose support lies over the singular points of $\overline{R}_d$ other than $P$.

Let $y \in R_{d}$ be a general point of $R_d$ and set $x = \mu(y)$. As $y$ is general, $x$ is a smooth point of $\overline{R}_d$. Choose an ample divisor $H$ of $\overline{R}_d$. Then, there is a positive integer $m$ (may depends on $y$) and a complete intersection curves 
$\overline{C} = H_1 \cap H_2 \cap \ldots \cap H_{d-1}$ ($H_i \in |mH|$) such that $\overline{C}$ is irreducible, $\overline{C} \ni x$, $\overline{C} \ni P$ and $\overline{C}$ contains no other singular points of $\overline{R}_d$. This is possible, as $\overline{R}_d$ has only finitely many singular points. Let $C \subset R_d$ be the strict transform of $\overline{C}$. Then $(K_{R_d}.C) = -a(E.C) + (E'.C) = -a(E.C) <0$. 
Note that $y \in C$ as $x \in \overline{C}$ and $x$ is a smooth point. Therefore, for a general point $y \in R_{d}$, there is an irreducible curve $C \subset R_d$ such that $y \in C$ and $(K_{R_d}.C) < 0$.  Hence $R_d$ is uniruled by the numerical criterion of the uniruledness due to Miyaoka-Mori (\cite{MM86}, see also \cite[Chap. IV, Theorem 1. 13]{Ko96}). Here one can also apply \cite[Lemma 2.4]{HMZ14} to conclude the uniruledness of $R_d$ ($3 \le d \le 5$). 

{\it As one of the referees pointed out, the argument here shows that the quotient variety of an abelian variety by a finite group is uniruled if it admits a singular point worse than canonical singularity}. (See also \cite{KL09}, \cite{GHS03} for more details.) 

Now consider the maximal rationally connected fibration $\pi : R_d \dasharrow B$ of $R_d$. It is unique up to bitational equivalence (\cite{KMM92}, \cite[Chap. IV, Theorem 5.5]{Ko96}). In particular, $\pi$ is $f_{R_d}$-equivariant. Here $B$ is not uniruled by \cite{GHS03}. Thus $\dim\, B < \dim R_d$ as $R_d$ is uniruled. As $f_{R_d}$ is primitive, it follows that $\dim\, B = 0$. Hence $R_d$ is rationally connected. 
\end{proof}

The following question is yet open and is of its own interest (see \cite{COV15} for some attempt):

\begin{question}\label{quet31} Are $R_4$ and $R_5$ constructed in our proof of Corollary \ref{cor32} rational?  
\end{question}
Note that $R_1$ and $R_2$ are obviously rational and $R_3$ is also rational (\cite{OT15}), while  $R_d$ is never rational (even never uniruled) for $d \ge 6$, as $E_{\omega}^d/\langle -\omega I_d \rangle$ ($d \ge 6$) has only canonical singularities with numerically trivial canonical divisor. 

\section{Hilbert schemes of points and hyperk\"ahler manifolds with primitive automorphisms of positive topological entropy.}

Our main result of this section is Theorem \ref{thm41}. Salem numbers also naturally appear as the dynamical degrees of automorphisms of projective hyperk\"ahler manifolds (\cite{Og09}):
\begin{proposition}\label{prop42} Let $M$ be a (not necessarily projective) hyperk\"ahler manifold of dimension $2m$ and $f \in {\rm Aut}\, (M)$. Then, the $p$-th dynamical degree $d_p(f)$ is either $1$ or a Salem number of degree $\le b_2(M)$. Moreover, $d_p(f) = d_{2m-p}(f) = d_1(f)^p\,\, ,\,\, 0 \le p \le m$, 
and $h_{\rm top}(f) = m \log d_1(f)$. 
\end{proposition}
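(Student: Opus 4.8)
The plan is to reduce everything to the action of $f^*$ on second cohomology, via the Beauville--Bogomolov--Fujiki (BBF) quadratic form $q_M$ on $H^2(M,\R)$ and the Fujiki relation, following \cite{Og09} (see \cite{GHJ03} for the facts on hyperk\"ahler manifolds used below); here $\dim_{\C}M=2m$. Recall that $q_M$ is integral on $H^2(M,\Z)$, of signature $(3,b_2(M)-3)$, that the real $2$-plane spanning $(H^{2,0}\oplus H^{0,2})\cap H^2(M,\R)$ is $q_M$-positive definite, so that its orthogonal complement $H^{1,1}(M,\R)$ is a hyperbolic space of signature $(1,b_2(M)-3)$, and that the K\"ahler cone $\mathcal{K}_M$ is an open convex subcone of the positive cone of $(H^{1,1}(M,\R),q_M)$. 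Since $f$ is biregular, $f^*$ is a graded ring automorphism of $H^*(M,\Z)$ which preserves $q_M$ and the Hodge decomposition and carries $\mathcal{K}_M$ onto itself, so $f^*|H^{1,1}(M,\R)$ is an isometry of this hyperbolic space preserving the positive cone. First I would use the structure of such isometries: the spectral radius $\lambda_1\ge1$ of $f^*|H^{1,1}(M,\R)$ is an eigenvalue, realised by an eigenvector $v_1\in\overline{\mathcal{K}_M}\setminus\{0\}$, and if $\lambda_1>1$ the isometry is loxodromic, so $\lambda_1$ and $1/\lambda_1$ are simple eigenvalues, $q_M(v_1)=0$, and all other eigenvalues have modulus $1$. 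On $H^{2,0}\oplus H^{0,2}=\C\eta_M\oplus\C\overline{\eta}_M$ the map $f^*$ acts by scalars of modulus $1$, because $\int_M(\eta_M\overline{\eta}_M)^{m}\ne0$ by the Fujiki relation whereas $\int_M$ is $f^*$-invariant. As $f^*$ is an automorphism, Theorem \ref{thm21} then gives $d_1(f)=r_1(f)=\lambda_1$.

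The Salem property comes next. If $\lambda_1=1$ then $d_1(f)=1$, hence $h_{\rm top}(f)=0$ by Theorem \ref{thm21}, so every $d_p(f)=1=d_1(f)^p$ and $h_{\rm top}(f)=0=m\log d_1(f)$; hence assume $\lambda_1>1$. Then $\lambda_1$ is an eigenvalue of the integral matrix $f^*|H^2(M,\Z)$, so $\lambda_1\in\overline{\Z}$; since $f^*$ preserves the non-degenerate $q_M$, $1/\lambda_1$ is also an eigenvalue, so $1/\lambda_1\in\overline{\Z}$ and $\lambda_1$ is a unit. Every Galois conjugate of $\lambda_1$ is again an eigenvalue of $f^*|H^2(M,\Z)$; by the first step the eigenvalues of $f^*|H^2(M,\C)$ are $\lambda_1$, $1/\lambda_1$ and $b_2(M)-2$ numbers of modulus $1$, so every conjugate of $\lambda_1$ other than $\lambda_1$ has modulus $\le1$. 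If none of them equalled $1/\lambda_1$, the product of all conjugates would have modulus $\lambda_1>1$, contradicting that $\lambda_1$ is a unit; therefore $1/\lambda_1$ is a conjugate of $\lambda_1$, and the remaining conjugates lie on the unit circle (they are not roots of unity, since an irreducible polynomial with a real root $>1$ has none of those among its roots). Hence the minimal polynomial of $\lambda_1$ is a Salem polynomial of degree $\le b_2(M)$, i.e.\ $d_1(f)$ is a Salem number of degree $\le b_2(M)$.

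Next I would prove $d_p(f)=d_1(f)^p$ for $0\le p\le m$. By the first step $f^*|H^2(M,\C)$ is semisimple with simple top eigenvalue $\lambda_1$; fix a K\"ahler class $H\in H^{1,1}(M,\R)$, write $H=cv_1+(\text{lower eigenmodes})$, and recover $c$ by pairing with the eigenvector $v_1'$ of $f^*$ for the eigenvalue $1/\lambda_1$: one finds $c=q_M(H,v_1')/q_M(v_1,v_1')\ne0$, since $q_M(v_1,v_1')\ne0$ (two distinct isotropic lines in a hyperbolic space are non-orthogonal) and $q_M(H,v_1')\ne0$ (the orthogonal complement of the timelike class $H$ is negative definite, so has no non-zero isotropic vector). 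Thus $(f^n)^*H=c\lambda_1^nv_1+\gamma_n$ with $\|\gamma_n\|$ bounded. Since $q_M(v_1)=0$, the polarised Fujiki relation shows that $\int_M v_1^p\cdot H^{2m-p}$ equals a positive universal constant times $q_M(v_1,H)^p\,q_M(H)^{m-p}$ when $p\le m$ (each isotropic factor must be paired with an $H$) and vanishes when $p>m$; in particular it is $>0$ for $0\le p\le m$. Expanding $((f^n)^*H)^p\cdot H^{2m-p}$, the leading term is $c^p\lambda_1^{np}\int_M v_1^p H^{2m-p}$ and the remaining terms are $O(\lambda_1^{n(p-1)})$, so $d_p(f)=\lim_n\big(((f^n)^*H)^p\cdot H^{2m-p}\big)^{1/n}=\lambda_1^p$ for $0\le p\le m$.

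Finally, applying this to the automorphism $f^{-1}$ — whose action on $H^{1,1}(M,\R)$ has $\lambda_1$ and $1/\lambda_1$ interchanged, so $d_1(f^{-1})=\lambda_1$ — gives $d_q(f^{-1})=\lambda_1^q$ for $0\le q\le m$; and since the cup-product pairing $H^{p,p}(M)\times H^{2m-p,2m-p}(M)\to\C$ is perfect and $f^*$-invariant, the operators $f^*|H^{p,p}$ and $(f^{-1})^*|H^{2m-p,2m-p}$ have the same eigenvalues, so $d_p(f)=r_p(f)=r_{2m-p}(f^{-1})=d_{2m-p}(f^{-1})=\lambda_1^{2m-p}$ for $m\le p\le2m$. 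In particular $d_p(f)=d_{2m-p}(f)=d_1(f)^p$ for $0\le p\le m$, and each $d_p(f)$, being a power of $\lambda_1$, is $1$ or a Salem number of degree $\le b_2(M)$ (a power of a Salem number of degree $d$ is again a Salem number of degree $\le d$). By Theorem \ref{thm21}, $h_{\rm top}(f)=\log\max_{0\le p\le2m}d_p(f)=\log\lambda_1^m=m\log d_1(f)$. I expect the main obstacle to be the first step — extracting the exact eigenvalue picture of $f^*|H^2(M,\C)$ by playing the real hyperbolic geometry of $H^{1,1}(M,\R)$ against the arithmetic of the lattice $H^2(M,\Z)$ — together with the asymptotic upper bound $d_p(f)\le\lambda_1^p$ in the Fujiki computation.
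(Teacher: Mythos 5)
Your argument is correct; the paper itself states Proposition \ref{prop42} as a quoted result from \cite{Og09} without reproducing a proof, and what you have written is essentially the argument of that reference. Namely: reduce to the isometric action of $f^*$ on the Beauville--Bogomolov--Fujiki lattice to get the Salem/loxodromic dichotomy for $d_1(f)=r_1(f)$, and then extract $d_p(f)=d_1(f)^p$ for $p\le m$ from the polarized Fujiki relation applied to the eigenvector decomposition of a K\"ahler class, with the remaining claims following by duality ($f\leftrightarrow f^{-1}$) and Gromov--Yomdin.
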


Let $S^{[n]} = {\rm Hilb}^n(S)$ be the Hilbert scheme of $0$-dimensional closed subschemes of lenghts $n$ of a smooth projective surface $S$. Then $S^{[n]}$ is a smooth projective variety of dimension $2n$ by Fogarty \cite{Fo68}. Let $f \in {\rm Aut}\, (S)$. Then $f$ naturally induces an automorphism of $S^{[n]}$, which we denote by 
$f^{[n]} \in {\rm Aut}\, (S^{[n]})$. 

\begin{remark}\label{rem51} It is well known that $S^{[n]}$ is a projective hyperk\"ahler manifold of dimension $2n$ if $S$ is a projective K3 surface (\cite{Fu83} for $n=2$, \cite{Be83} for arbitrary $n$). If $S$ is an Enriques surface, then the universal cover $M$ of $S^{[n]}$, which is of covering degree $2$, is a Calabi-Yau manifold of dimension $2n$ (\cite{OS11}). If $S$ is a smooth rational surface, then $S^{[n]}$ is a smooth projective rational variety of dimension $2n$. Indeed, $S^{[n]}$ is birational to the symmetric product ${\rm Sym}^n\, (\C^2)$, the later of which is rational by classical invariant theory (see eg. \cite[Chap. 4, Theorem 2.8]{GKZ94} for details). 
\end{remark} 

\begin{proposition}\label{prop43} Let $S$ be a projective K3 surface with an automorphism $f \in {\rm Aut}\, (S)$ such that $d_1(f)$ is a Salem number. Then the automorphism $f^{[n]}$ of $S^{[n]}$ ($n \ge 2$) is primitive and of positive topological entropy.  
\end{proposition}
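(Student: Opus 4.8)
The plan is to exploit Theorem \ref{thm4} together with the geometry of the Hilbert scheme $S^{[n]}$ of a projective K3 surface. First I would recall from Remark \ref{rem51} that $S^{[n]}$ is a projective hyperk\"ahler manifold of dimension $2n$, and that $f$ induces $f^{[n]} \in {\rm Aut}\,(S^{[n]})$. Since $f^{[n]}$ is an automorphism of a hyperk\"ahler manifold, Theorem \ref{thm4} reduces everything to the single numerical assertion $d_1(f^{[n]}) > 1$. So the entire content of the proposition is to compute, or at least bound below, $d_1(f^{[n]})$ in terms of $d_1(f)$, and to observe that positive entropy of $f^{[n]}$ is then automatic by Theorem \ref{thm21} (equivalently Proposition \ref{prop42}, which gives $h_{\rm top}(f^{[n]}) = n\log d_1(f^{[n]})$).

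Next I would compute $d_1(f^{[n]})$. The natural tool is the Beauville--Bogomolov form together with the product formula (Theorem \ref{thm23}) applied to the Hilbert--Chow morphism $\rho : S^{[n]} \to {\rm Sym}^n(S)$ and the natural map ${\rm Sym}^n(S) \to {\rm Sym}^n(S)$ induced by $f$; alternatively, and more cleanly, one uses the well-known description of the cohomology of $S^{[n]}$: the action of $(f^{[n]})^*$ on $H^2(S^{[n]}, \Z) \cong H^2(S, \Z) \oplus \Z\delta$ is $f^* \oplus {\rm id}$, where $\delta$ is half the class of the exceptional divisor of $\rho$. Hence the spectral radius of $(f^{[n]})^* | H^{1,1}(S^{[n]})$ equals the spectral radius of $f^* | H^{1,1}(S)$, which is $d_1(f)$ since $f$ is an automorphism (so $r_1(f) = d_1(f)$ by Remark \ref{rem20}). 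By Gromov--Yomdin (Theorem \ref{thm21}) applied on $S^{[n]}$, this gives $d_1(f^{[n]}) = r_1(f^{[n]}) = d_1(f)$. In particular $d_1(f^{[n]}) = d_1(f) > 1$, the last inequality because a Salem number is by definition $> 1$.

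Combining the two steps: $d_1(f^{[n]}) = d_1(f) > 1$, so $f^{[n]}$ has positive topological entropy by Theorem \ref{thm21}, and $f^{[n]}$ is primitive by Theorem \ref{thm4} applied to the hyperk\"ahler manifold $S^{[n]}$. This completes the proof.

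I expect the only real subtlety to be the identification of the $(f^{[n]})^*$-action on $N^1(S^{[n]})$, i.e. making precise that the exceptional class $\delta$ is $f^{[n]}$-invariant and that the summand $H^2(S,\Z)$ carries exactly the action $f^*$; this is standard (Beauville) but should be cited carefully. Everything else — the reduction to a dynamical-degree computation, the appeal to Gromov--Yomdin, and the invocation of Bianco's theorem — is formal given the results already assembled in Sections 2 and 3.
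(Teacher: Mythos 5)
Your proposal is correct and follows essentially the same route as the paper: Beauville's decomposition $H^2(S^{[n]},\Z)\simeq H^2(S,\Z)\oplus\Z(E/2)$ with action $f^*\oplus\mathrm{id}$, the Gromov--Yomdin identification $d_1(f^{[n]})=r_1(f^{[n]})=r_1(f)=d_1(f)>1$, and then Bianco's theorem (Theorem \ref{thm4}) to get primitivity. The paper's proof is exactly this three-line argument, so no further comparison is needed.
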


\begin{proof} Let $E$ be the exceptional divisor of the Hilbert-Chow morphism 
$S^{[n]} \to {\rm Sym}^n\,(S)$. By \cite{Be83}, we have an isomorphism 
$H^{2}(S^{[n]}, \Z) \simeq H^2(S, \Z) \oplus \Z(E/2)$ compatible with Hodge decomposition and the actions of $f^{[n]}$ and $f \oplus id_{\Z(E/2)}$. Thus, by Theorem \ref{thm21}, $d_1(f^{[n]}) = r_1(f^{[n]}) = r_1(f) = d_1(f)$. Hence $f^{[n]}$ is of positive topological entropy and it is primitive 
by Theorem \ref{thm4}. \end{proof}

The following consequence may be of its own interest:

\begin{theorem}\label{thm41} 

\begin{enumerate}

\item Let $M$ be a (not necessarily projective) hyperk\"ahler fourfold and $f \in {\rm Aut}\, (M)$. Then $h_{\rm top}(f) \ge 2\log \lambda_{\rm Lehmer}$
unless $h_{\rm top}(f) = 0$. Here $\lambda_{\rm Lehmer}$ is the Lehmer number (See Example \ref{ex41}).

\item There is a projective hyperk\"ahler fourfold $M$ with a primitive automorphism $f \in {\rm Aut}\, (M)$ of the smallest possible positive topological entropy $h_{\rm top}(f) = 2\log \lambda_{\rm Lehmer}$. 
\end{enumerate}
\end{theorem}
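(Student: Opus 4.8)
\begin{enumerate}
\item Let $M$ be a (not necessarily projective) hyperkähler fourfold and $f \in {\rm Aut}\, (M)$. Then $h_{\rm top}(f) \ge 2\log \lambda_{\rm Lehmer}$ unless $h_{\rm top}(f) = 0$.
\item There is a projective hyperkähler fourfold $M$ with a primitive automorphism $f \in {\rm Aut}\, (M)$ of the smallest possible positive topological entropy $h_{\rm top}(f) = 2\log \lambda_{\rm Lehmer}$.
\end{enumerate}

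The plan is as follows.

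For (1), the first thing I would do is invoke Proposition \ref{prop42}: since $\dim M = 4$, i.e. $m = 2$, we have $h_{\rm top}(f) = 2\log d_1(f)$ and $d_1(f)$ is either $1$ or a Salem number of degree $\le b_2(M)$. So the claim reduces to showing that if $d_1(f) > 1$ then $d_1(f) \ge \lambda_{\rm Lehmer}$. Here I would use the fact — recalled in Example \ref{ex41}, following McMullen \cite[Theorem A.1]{Mc07} — that the Lehmer number is the smallest Salem number among all Salem numbers of degree $\le 40$. Since for a hyperkähler fourfold of $K3^{[2]}$-type $b_2(M) = 23$, and for the other known deformation type in dimension four ($b_2 = 23$ is the only sporadic possibility anyway, but $b_2 \le 24$ in all known cases and certainly $\le 40$), the degree constraint $\deg d_1(f) \le b_2(M) \le 40$ is satisfied, the minimality statement applies and gives $d_1(f) \ge \lambda_{\rm Lehmer}$, hence $h_{\rm top}(f) \ge 2\log \lambda_{\rm Lehmer}$. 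The one point to be careful about is that McMullen's statement is phrased for Salem numbers arising as dynamical degrees of K3 surface automorphisms; what one actually needs is just the unconditional arithmetic fact that Lehmer's number is the minimal Salem number of degree at most $40$ (this is the computation referenced in \cite{Mo03}), together with the degree bound from Proposition \ref{prop42}.

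For (2), I would produce $M$ as $S^{[2]}$ for a suitable projective K3 surface $S$. By the main theorem of McMullen \cite{Mc16} there exists a projective K3 surface $S$ carrying an automorphism $f \in {\rm Aut}\, (S)$ with $d_1(f) = \lambda_{\rm Lehmer}$; this is precisely the realization of Lehmer's number as the first dynamical degree of a K3 automorphism mentioned in Example \ref{ex41}. Then $M := S^{[2]}$ is a projective hyperkähler fourfold by Remark \ref{rem51}, and $f^{[2]} \in {\rm Aut}\, (M)$. By the computation in the proof of Proposition \ref{prop43} — using Beauville's decomposition $H^2(S^{[2]}, \Z) \simeq H^2(S, \Z) \oplus \Z(E/2)$ compatible with the actions of $f^{[2]}$ and $f \oplus \mathrm{id}$ — we get $d_1(f^{[2]}) = d_1(f) = \lambda_{\rm Lehmer} > 1$, so $f^{[2]}$ has positive topological entropy, and by Theorem \ref{thm4} it is primitive. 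Finally, by Proposition \ref{prop42} applied to $M$ (again $m = 2$), $h_{\rm top}(f^{[2]}) = 2\log d_1(f^{[2]}) = 2\log \lambda_{\rm Lehmer}$, which by part (1) is the smallest possible positive value. This finishes the construction; taking $f := f^{[2]}$ gives the desired pair $(M, f)$.

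The main obstacle is really the input from part (1): one must know that no Salem number strictly smaller than Lehmer's can occur, which ultimately rests on the degree bound $\deg d_1(f) \le b_2(M)$ and on the (computer-assisted) classification of small-degree Salem numbers; everything else is bookkeeping with the two propositions and Theorem \ref{thm4} already available. A secondary point worth double-checking is that the K3 surface and automorphism furnished by \cite{Mc16} are indeed projective (not merely non-projective, as in some of McMullen's other constructions), since projectivity of $S$ is what guarantees projectivity of the hyperkähler fourfold $S^{[2]}$.
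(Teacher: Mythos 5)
Your overall strategy coincides with the paper's: part (1) is Proposition \ref{prop42} plus the minimality of the Lehmer number among Salem numbers of bounded degree, and part (2) is exactly the pair $(S^{[2]}, f^{[2]})$ built from McMullen's projective K3 surface, with positivity of entropy coming from the Beauville decomposition as in Proposition \ref{prop43} and primitivity from Theorem \ref{thm4}. Part (2) of your write-up is complete and matches the paper.

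The one genuine gap is in part (1), in how you justify the degree bound on the Salem number $d_1(f)$. The theorem is asserted for an \emph{arbitrary} (not necessarily projective) hyperk\"ahler fourfold, and the deformation types of such fourfolds are not classified; so a bound such as $b_2(M)\le 40$ cannot be obtained by inspecting ``all known cases'' or ``known deformation types'', which is what your argument does. The input the paper uses at this point is Guan's theorem \cite{Gu01}, which proves unconditionally that every compact hyperk\"ahler fourfold satisfies $3\le b_2(M)\le 8$ or $b_2(M)=23$. Combined with Proposition \ref{prop42} this gives $\deg d_1(f)\le 23\le 40$, and then the verified minimality of $\lambda_{\rm Lehmer}$ among Salem numbers of degree $\le 40$ (\cite{Mo03}) yields $d_1(f)\ge\lambda_{\rm Lehmer}$ whenever $d_1(f)>1$. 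Your remark that one needs the arithmetic fact about Salem numbers of small degree rather than McMullen's dynamical statement for K3 automorphisms is correct; what is missing is the unconditional topological bound on $b_2$ that makes that arithmetic fact applicable.
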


\begin{proof}

By Proposition \ref{prop42}, $d_1(f) =1$ or $d_1(f)$ is a Salem number. Guan \cite{Gu01} shows that $3 \le b_2(M) \le 8$ or $b_2(M) =23$ for hyperk\"ahler fourfolds. $d_1(f)$ is then of degree $\le 23$ again by Proposition \ref{prop42}. Thus, as remarked in Example \ref{ex41}, it follows from \cite{Mo03} that
$$d_1(f) \ge \lambda_{\rm Lehmer}\,\, ,\,\, 
{\rm unless}\,\, d_1(f) = 1\,\, .$$ 
Thus, again by Proposition \ref{prop42}, $h_{\rm top}(f) \ge 2\log \lambda_{\rm Lehmer}$ unless $h_{\rm top}(f) = 0$. This proves (1). 
 
As remarked in Example \ref{ex41}, McMullen \cite{Mc16} shows that there is a projective K3 surface $S$ with automorphism $f \in {\rm Aut}\, (S)$ such that $d_1(f)$ is the Lehmer number. By Proposition \ref{prop41}, $(S^{[2]}, f^{[2]})$ gives then a desired example in (2). \end{proof}

In the view of Proposition \ref{prop42} and Theorem \ref{thm41}, it is interesting to ask the following:
\begin{question}\label{ques41} Let $M$ be a (not necessarily projective) hyperk\"ahler manifold of dimension $2m \ge 6$ and $f \in {\rm Aut}\, (M)$. Is then 
$h_{\rm top}(f) \ge m\log \lambda_{\rm Lehmer}$ unless $h_{\rm top}(f) = 0$?
\end{question}

\section{Smooth rational manifolds with primitive birational automorphisms.}

In this section, we shall ahsow Theorems \ref{thm61} and \ref{thm63}, from which Theorem \ref{thm2} (2) and (3) follows. 

\begin{theorem}\label{thm61} Let $S$ be a projective K3 surface with $f \in {\rm Aut}\, (S)$ such that $d_1(f) > 1$. Assume that there is $\iota \in {\rm Aut}\, (S)$ such that $\iota$ is of finite order, the minimal resolution $T$ of $S/\langle \iota \rangle$ is a rational surface and $f \circ \iota = \iota \circ f$. We denote by $f_T$ the automorphism of $T$ induced by $f$. Then $T^{[n]}$ is a $2n$-dimensional smooth rational manifold and the induced automorphism $f_T^{[n]} \in {\rm Aut}\, (T^{[n]})$ is primitive and of positive entropy. 
\end{theorem}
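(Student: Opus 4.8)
The plan is to bootstrap everything from the hyperkähler case via two reductions: a rationality statement coming from invariant theory, and a primitivity statement obtained by comparing $T^{[n]}$ with $S^{[n]}$ through the quotient/resolution correspondence. First I would establish that $T^{[n]}$ is rational. By Remark \ref{rem51}, since $T$ is a smooth rational surface, $T^{[n]}$ is birational to $\mathrm{Sym}^n(T)$, which is birational to $\mathrm{Sym}^n(\C^2)$, and the latter is rational by classical invariant theory (as cited there). So this step is essentially a citation. Simultaneously, the automorphism $f_T^{[n]}$ is of positive entropy: the surface automorphism $f_T$ of $T$ is induced from $f$ on $S$, and the minimal resolution $T\to S/\langle\iota\rangle$ is equivariant, so $d_1(f_T)=d_1(f)>1$ because blow-ups and the generically-finite quotient map preserve the first dynamical degree (Theorem \ref{thm21} together with Remark \ref{rem20}(2), or Corollary \ref{cor21}(1)); then $d_1(f_T^{[n]})=d_1(f_T)>1$ by the same argument as in Proposition \ref{prop43}, using the Hilbert--Chow morphism and the fact that $f_T^{[n]*}$ acts on $H^2(T^{[n]},\Z)\simeq H^2(T,\Z)\oplus\Z(E/2)$ compatibly. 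Hence $h_{\rm top}(f_T^{[n]})>0$.

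The heart of the matter is primitivity of $f_T^{[n]}$. Here I would exploit the commuting involution-type automorphism $\iota$. The finite group $G=\langle\iota\rangle$ acts on $S^{[n]}$ (via $\iota^{[n]}$), this action commutes with $f^{[n]}$ (because $f\circ\iota=\iota\circ f$ lifts to $f^{[n]}\circ\iota^{[n]}=\iota^{[n]}\circ f^{[n]}$), and the quotient $S^{[n]}/G$ is birational to $(S/\langle\iota\rangle)^{[n]}$, hence birational to $T^{[n]}$. So there is a generically finite dominant rational map $S^{[n]}\dashrightarrow T^{[n]}$ intertwining $f^{[n]}$ and $f_T^{[n]}$. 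Now $S^{[n]}$ is a projective hyperkähler manifold with $d_1(f^{[n]})=d_1(f)>1$, so $f^{[n]}$ is primitive by Theorem \ref{thm4}. Applying Lemma \ref{lem32} with $U=S^{[n]}$ (suitably resolved so the map is a morphism, or rather its inverse — one must be a touch careful about the direction, since Lemma \ref{lem32} propagates primitivity along $\mu\circ f_U=f_V\circ\mu$ from $U$ to $V$, and here $U=S^{[n]}$ dominates $V=T^{[n]}$) yields primitivity of $f_T^{[n]}$ on $T^{[n]}$.

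The main obstacle I anticipate is precisely the bookkeeping of that generically finite correspondence: one needs that $S^{[n]}$ really does admit a $G$-action with quotient birational to $T^{[n]}$, and that $f^{[n]}$ descends compatibly. The cleanest route is to work birationally: $S^{[n]}$ is birational to $\mathrm{Sym}^n(S)$, on which $G$ acts diagonally, and $\mathrm{Sym}^n(S)/G$ is birational to $\mathrm{Sym}^n(S/G)$, which is birational to $\mathrm{Sym}^n(T)$, hence to $T^{[n]}$ — all $f$-equivariantly, since $f$ commutes with $\iota$ and acts compatibly on every term. Then Lemma \ref{lem32}, which is stated exactly for dominant generically finite rational maps between smooth projective varieties of equal dimension, applies after replacing $S^{[n]}$ by a common resolution; since $2n=\dim S^{[n]}=\dim T^{[n]}$, the dimension hypothesis is met. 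Thus primitivity transfers from the hyperkähler $S^{[n]}$ to $T^{[n]}$, completing the proof.
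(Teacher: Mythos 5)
Your proposal is correct and follows essentially the same route as the paper: rationality of $T^{[n]}$ via Remark \ref{rem51}, and then transferring primitivity and positive entropy from the hyperk\"ahler $S^{[n]}$ (where Proposition \ref{prop43}, resting on Bianco's Theorem \ref{thm4}, applies since $d_1(f)$ is automatically a Salem number for a K3 automorphism with $d_1(f)>1$) through the induced generically finite dominant rational map $S^{[n]}\dasharrow T^{[n]}$, using Lemma \ref{lem32} and Corollary \ref{cor21} (1). Your extra care about the direction of Lemma \ref{lem32} and the symmetric-product bookkeeping only makes explicit what the paper leaves implicit.
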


\begin{example}\label{rem61} Let $E$ and $F$ be mutually non-isogenous elliptic curves and $S := {\rm Km}\, (E \times F)$ the Kummer K3 surface associated with the product abelian surface $E \times F$. We denote by $\omega_S$ a nowhere vanishing holomorphic $2$-form on $S$. Let $\iota \in {\rm Aut}\, (S)$ be the automorphism of $S$ of order $2$, induced by $(-1_E, 1_F) \in {\rm Aut}\, (E \times F)$. Note that $\iota^*\omega_S = -\omega_S$ and $\iota$ has fixed curves. So, the surface $T = S/\langle \iota \rangle$ is smooth and it is rational by Castelnuovo's criterion. By \cite[Remark 6.3]{Yu16}, $S$ admits an automorphism $f$ such that $d_1(f)$ is a Salem number of degree $10$. The triple $(S, f, \iota)$ then satisfies the condition in Theorem \ref{thm61}, as $\iota$ is in the center of ${\rm Aut}\, (S)$ by \cite[Proof of Lemma 1.4]{Og89}.  
\end{example}

\begin{proof} 
$T^{[n]}$ is a smooth projective rational variety of dimension $2n$ (See Remark \ref{rem51}). 
The quotient morphism $S \dasharrow T$ induces a dominant rational map $\nu : S^{[n]} \dasharrow T^{[n]}$. The map $\nu$ is equivariant under the automorphisms $f_S^{[n]}$ and $f_T^{[n]}$. As $d_1(f_S)$ is Salem number, the automorphism $f_S^{[n]}$ is primitive and of positive entropy by Proposition 
\ref{prop43}. Thus so is $f_{T}^{[n]}$ by Lemma \ref{lem32} and Corollary \ref{cor21} (1). 
\end{proof}
\begin{definition}\label{def61} Let $(\C^{\times})^{d}$ be the $d$-dimensional algebraic torus and $(t_1, t_2, \ldots, t_d)$ the standard coordinate of $(\C^{\times})^{d}$. We naturally regard as $(\C^{\times})^{d} \subset \BP^{d}$. The birational map $\varphi_{A} \in {\rm Bir}\, (\BP^{d})$ ($A = (a_{ij}) \in {\rm GL}_{d}(\Z)$) defined by 
$$\varphi_{A}^*(t_1, t_2, \ldots, t_{d}) = (\Pi_{j=1}^{d}t_j^{a_{j1}}, \Pi_{j=1}^{d}t_j^{a_{j2}}, \ldots, \Pi_{j=1}^{d} t_{j}^{a_{jd}})$$
is called the monomial map associated to $A \in {\rm GL}_{d}(\Z)$. 
\end{definition}
Note that $\varphi_{AB} = \varphi_{A} \circ \varphi_{B}$ and $\varphi_{A^{-1}} = \varphi_{A}^{-1}$ for $A, B \in {\rm GL}_{d}(\Z)$. The monomial maps are intensively studied by \cite{HP07}, \cite{FW12}, \cite{Li12} from dynamical point of views. In particular, the closed formula of the dynamical degrees of monomial maps is conjectured by \cite{HP07} and Favre and Wulcan \cite[Corollary]{FW12} and Lin \cite[Theorem 1]{Li12} confirm the conjecture: 
\begin{theorem}\label{thm62} Let $A \in {\rm GL}_{d}(\Z)$ and let $\varphi_A \in {\rm Bir}\,(\BP^d)$ be the associated monomial map. Order the eigenvalues $\alpha_j$ ($1 \le j \le d$) of $A$ as
$$|\alpha_1| \ge |\alpha_2| \ge \ldots \ge |\alpha_{d}|\,\, .$$
Then $d_k(\varphi_A) = \Pi_{j=1}^{k} |\alpha_j|$. 
\end{theorem}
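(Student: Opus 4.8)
\emph{Proof strategy.} On the open torus $(\C^{\times})^{d}\subset\BP^{d}$ the monomial map $\varphi_{A}$ is an automorphism, and on the character lattice $M=\Z^{d}$ it acts by $\varphi_{A}^{*}=A$, hence on $\wedge^{k}M_{\R}\cong\wedge^{k}\R^{d}$ it acts by $\wedge^{k}A$, whose spectral radius $\rho(\wedge^{k}A)$ equals $\prod_{j=1}^{k}|\alpha_{j}|$ for the given ordering; so the claim is $d_{k}(\varphi_{A})=\rho(\wedge^{k}A)$. The difficulty is that $\varphi_{A}$ is not $k$-stable on $\BP^{d}$: since $H^{k,k}(\BP^{d})=\Z$, $k$-stability would force the $k$-th degrees to satisfy $\deg_{k}(\varphi_{A}^{n})=\deg_{k}(\varphi_{A})^{n}$, which fails because the toric boundary of $\BP^{d}$ lies in the indeterminacy locus of $\varphi_{A}$ and iteration creates cancellation. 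The plan is to establish the upper bound $d_{k}(\varphi_{A})\le\rho(\wedge^{k}A)$ directly by degree estimates, and the matching lower bound by an elementary Newton-polytope argument when $k\in\{1,d-1\}$ (which already covers all $d\le 3$) and, for the remaining codimensions, by passing to toric models as in \cite{FW12} and \cite{Li12}.

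\emph{Upper bound.} Write $\deg_{k}(g):=(g^{*}H^{k}\cdot H^{d-k})_{\BP^{d}}$, so that $d_{k}(\varphi_{A})=\lim_{n\to\infty}\deg_{k}(\varphi_{A}^{n})^{1/n}$ by the definition of the dynamical degrees. Substituting $t_{j}\mapsto\prod_{i}t_{i}^{(A^{n})_{ij}}$ into $k$ general linear forms on $\BP^{d}$ shows that $(\varphi_{A}^{n})^{*}(H^{k})$ is represented by the closure of the common zero locus of $k$ Laurent polynomials whose Newton polytopes lie in $A^{n}(\Delta)$, where $\Delta=\mathrm{conv}(0,e_{1},\ldots,e_{d})$ is the standard simplex; a mixed-volume (Bernstein--Kushnirenko) estimate adapted to this shape bounds its $\BP^{d}$-degree by $C(d,k)\,\|\wedge^{k}(A^{n})\|$. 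Since $\|\wedge^{k}(A^{n})\|=\|(\wedge^{k}A)^{n}\|$ and $\lim_{n}\|(\wedge^{k}A)^{n}\|^{1/n}=\rho(\wedge^{k}A)$ by Gelfand's formula, we obtain $d_{k}(\varphi_{A})\le\rho(\wedge^{k}A)=\prod_{j=1}^{k}|\alpha_{j}|$.

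\emph{Lower bound.} For $k=1$: the Newton polytope of $(\varphi_{A}^{n})^{*}H$ contains the columns of $A^{n}$, one of which has coordinates of order $|\alpha_{1}|^{n}$, so $\deg_{1}(\varphi_{A}^{n})$ is of order at least $|\alpha_{1}|^{n}$ and hence $d_{1}(\varphi_{A})\ge|\alpha_{1}|$. For $k=d-1$: using $d_{k}(f)=d_{d-k}(f^{-1})$ for $f\in\mathrm{Bir}(M)$ with $\dim M=d$, together with $\varphi_{A}^{-1}=\varphi_{A^{-1}}$, one gets $d_{d-1}(\varphi_{A})=d_{1}(\varphi_{A^{-1}})=|\alpha_{d}|^{-1}=\prod_{j=1}^{d-1}|\alpha_{j}|$, the last equality because $|\det A|=1$. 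This proves the theorem for all $d\le 3$. In the range $2\le k\le d-2$ one would like to reduce to a lower-dimensional monomial map via a $\varphi_{A}$-equivariant rational projection $\BP^{d}\dasharrow\BP^{m}$ together with the product formula (Theorem \ref{thm23}) and Corollary \ref{cor21}, but such a projection exists only when $A$ admits a rational invariant subspace carrying the appropriate eigenvalues, which fails in general (for instance when $\Phi_{A}(x)$ is irreducible). Instead, following Lin \cite{Li12}, one works with the inverse limit of the groups $H^{k,k}(X)$ over all smooth complete toric modifications $X$ of $\BP^{d}$ (equivalently, with toric Minkowski-weight classes of codimension $k$): the map $\varphi_{A}^{*}$ acts on this space, its spectral radius is $d_{k}(\varphi_{A})$, and the description of the space by toric cycles exhibits a $\varphi_{A}^{*}$-invariant subspace on which the action is conjugate to $\wedge^{k}A$ on $\wedge^{k}\R^{d}$, forcing the spectral radius to be at least $\rho(\wedge^{k}A)$. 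Combined with the upper bound, $d_{k}(\varphi_{A})=\prod_{j=1}^{k}|\alpha_{j}|$.

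\emph{Main obstacle.} The hard part is the lower bound for the middle codimensions $2\le k\le d-2$: producing, inside the limit of the cohomologies of toric models, the full copy of the exterior power $\wedge^{k}A$, equivalently showing that the cancellation forced by the fan of $\BP^{d}$ under iteration never depresses the codimension-$k$ growth below $\rho(\wedge^{k}A)$. This is the combinatorial heart of \cite{FW12} and \cite{Li12} and requires a careful analysis of the star subdivisions generated by $A$ and its iterates in codimension $k$; the feature special to $\BP^{d}$ (and to products of projective spaces) is that the sub-representation of $\wedge^{k}\R^{d}$ singled out by the fan is all of $\wedge^{k}\R^{d}$, whereas for a general toric variety the codimension-$k$ dynamical degree is only the spectral radius of $\wedge^{k}A$ on a proper invariant subspace, and can be strictly smaller than $\prod_{j=1}^{k}|\alpha_{j}|$.
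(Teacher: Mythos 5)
The paper does not prove Theorem \ref{thm62} at all: it is quoted as a known result, conjectured by Hasselblatt--Propp \cite{HP07} and proved by Favre--Wulcan \cite{FW12} and Lin \cite{Li12}, so there is no in-paper argument to compare yours against. Judged on its own, your write-up is an outline of how those proofs go rather than a proof. The parts you actually carry out are fine: the reduction to $\varphi_A^n=\varphi_{A^n}$, the upper bound $d_k(\varphi_A)\le\rho(\wedge^kA)$ by polytope/mixed-volume estimates, the case $k=1$ via the Newton polytope of $A^n$ and Gelfand's formula, and the case $k=d-1$ via $d_{d-1}(\varphi_A)=d_1(\varphi_{A^{-1}})$ and $|\det A|=1$. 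But for $2\le k\le d-2$ you explicitly hand the lower bound back to \cite{FW12} and \cite{Li12} --- ``the combinatorial heart'' as you call it --- which is precisely the content of the theorem being cited. So as a self-contained argument there is a genuine gap; as a citation-plus-sanity-check it is acceptable, and in that mode it matches what the paper does (note that for the application in Theorem \ref{thm63} only $d_1$ and $d_2$ of a matrix with an irreducible characteristic polynomial are needed, and one could imagine shortcuts there, but $d_2$ already sits in the middle range once $d\ge4$).

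One concrete error in your closing paragraph: you assert that for a general toric variety the codimension-$k$ dynamical degree of $\varphi_A$ ``can be strictly smaller than $\prod_{j=1}^k|\alpha_j|$.'' This contradicts Theorem \ref{thm22}: every complete toric variety with dense torus $(\C^\times)^d$ is birational to $\BP^d$, the monomial map is the same birational self-map under this identification, and dynamical degrees are birational invariants, so $d_k(\varphi_A)=\rho(\wedge^kA)$ on any toric model. What can vary with the model is the spectral radius of $\varphi_A^*$ on $H^{k,k}$ of a \emph{fixed} compactification (failure of algebraic stability), which is presumably what you meant; but conflating that with the dynamical degree undercuts your own inverse-limit strategy, since the whole point of passing to the limit is that the resulting spectral radius is the model-independent quantity $d_k(\varphi_A)$.
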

\begin{theorem}\label{thm63} Let $d \ge 2$ and $\alpha >1$ be any Pisot unit of degree $d$ (Theorem \ref{thm31}). Then $\BP^{d}$ has a primitive birational automorphisms of first dynamical degree $\alpha > 1$. 
\end{theorem}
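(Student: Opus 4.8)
The plan is to realize the desired birational automorphism of $\BP^d$ as the monomial map $\varphi_{M_d}$ associated to the companion matrix $M_d \in {\rm GL}_d(\Z)$ of the minimal polynomial $S_d(X)$ of the Pisot unit $\alpha$, exactly the matrix already constructed in the proof of Theorem \ref{thm33}. Since $\alpha$ is a Pisot unit, its minimal polynomial has constant term $\pm 1$, so $M_d \in {\rm GL}_d(\Z)$ and $\varphi_{M_d} \in {\rm Bir}\,(\BP^d)$ is well defined (Definition \ref{def61}). The characteristic polynomial of $M_d$ is $S_d(X)$, whose roots are the Galois conjugates $\alpha_1 = \alpha, \alpha_2, \dots, \alpha_d$ of $\alpha$, ordered so that $|\alpha_1| = \alpha > 1 > |\alpha_2| \ge \dots \ge |\alpha_d|$ because $\alpha$ is Pisot.

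First I would compute the dynamical degrees. By Theorem \ref{thm62} of Favre--Wulcan and Lin, $d_k(\varphi_{M_d}) = \prod_{j=1}^k |\alpha_j|$. In particular $d_1(\varphi_{M_d}) = |\alpha_1| = \alpha > 1$, which gives the claimed first dynamical degree, and $d_2(\varphi_{M_d}) = |\alpha_1||\alpha_2| = \alpha|\alpha_2| < \alpha = d_1(\varphi_{M_d})$ since $|\alpha_2| < 1$. (One should note $d \ge 2$ guarantees the second conjugate exists; for $d = 2$ the product of the two conjugates is $\pm 1$, so $|\alpha_2| = 1/\alpha < 1$, and the inequality still holds.)

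Next, primitivity follows immediately from Corollary \ref{cor21}(2): any imprimitive birational automorphism $f$ with a nontrivial equivariant fibration satisfies $d_1(f) \le d_2(f)$, so $d_1(\varphi_{M_d}) > d_2(\varphi_{M_d})$ forces $\varphi_{M_d}$ to be primitive. This is the same strategy as in the proof of Theorem \ref{thm32}. Combining, $\varphi_{M_d}$ is a primitive birational automorphism of $\BP^d$ with $d_1(\varphi_{M_d}) = \alpha > 1$, proving the theorem (and hence Theorem \ref{thm2}(2)).

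The only genuinely nontrivial input here is Theorem \ref{thm62}: the closed formula for the dynamical degrees of a monomial map is not the naive spectral radius of $A$ acting on a fixed cohomology group — the map is not isomorphic in codimension one, so one truly needs the Favre--Wulcan--Lin theorem relating $d_k(\varphi_A)$ to products of absolute values of eigenvalues of $A$ (equivalently, to the mixed volumes / the action on the torus character lattice). Given that result, everything else is bookkeeping about Pisot conjugates, and the main "obstacle" is merely checking the ordering of the $|\alpha_j|$ and that $|\alpha_2| < 1$, which is the defining property of a Pisot number. I expect no serious difficulty beyond invoking Theorems \ref{thm31}, \ref{thm62} and Corollary \ref{cor21}(2) in the right order.
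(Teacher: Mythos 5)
Your proposal is correct and follows exactly the paper's own argument: the companion matrix of the minimal polynomial of the Pisot unit, the Favre--Wulcan--Lin formula (Theorem \ref{thm62}) giving $d_1(\varphi_M)=\alpha$ and $d_2(\varphi_M)=\alpha|\alpha_2|<d_1(\varphi_M)$, and primitivity via Corollary \ref{cor21}(2). No differences worth noting.
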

\begin{proof} Our proof is quite parallel to the proof of Theorem \ref{thm31}. 
Let $S_d(X) \in \Z[X]$ be the minimal polynomial of $\alpha$. As in the proof of Theorem \ref{thm31}, there is $M \in {\rm GL}_d(\Z)$ whose characteristic polynomial is $S_d(X)$. Order the complex zeros of $S_d(X)$, i.e., eigenvalues of $M$ as 
$$\alpha_1 := \alpha \ge |\alpha_2| \ge \ldots \ge |\alpha_d|\,\, .$$ 
Then by Theorem \ref{thm62}, the monomial map $\varphi_M$ satisfies that $d_1(\varphi_M) = \alpha$ and $d_2(\varphi_M) = \alpha |\alpha_2|$. As $\alpha$ is a Pisot unit, we have $\alpha >1$ and $|\alpha_2| < 1$. Hence,
$$d_1(\varphi_M) = \alpha >1\,\, ,\,\, d_1(\varphi_M) > d_2(\varphi_M)\,\, .$$ 
In particular, $\varphi_M$ is primitive by the second inequality and Corollary \ref{cor21} (2). This completes the proof of Theorem \ref{thm63}.  
\end{proof}
\section{Even dimensional Calabi-Yau manifolds with primitive automorphisms of positive topological entropy.}
In this section, we show the following theorem, from which Theorem \ref{thm2} (5) follows:
\begin{theorem}\label{thm71} For each positive integer $n$, there is a $2n$-dimensional Calabi-Yau manifold $M$ with a primitive automorphism $f \in {\rm Aut}\, (M)$ of positive topological entropy. 
\end{theorem}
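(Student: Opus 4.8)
The plan is to produce $M$ as the universal cover of the Hilbert scheme $S^{[n]}$ of a suitable Enriques surface $S$: by Remark \ref{rem51}, if $S$ is an Enriques surface then the universal cover $M \to S^{[n]}$, of covering degree $2$, is a $2n$-dimensional Calabi-Yau manifold (\cite{OS11}), and for $n=1$ it is the covering K3 surface of $S$. Primitivity of the automorphism we produce on $M$ cannot be read off from Corollary \ref{cor21} (2) — for $n \ge 2$ one has $d_2(\hat f) = d_1(\hat f)^2 > d_1(\hat f)$ on these Hilbert schemes — so instead primitivity is transported from a genuine hyperk\"ahler manifold via Lemma \ref{lem32}, in the spirit of the proof of Theorem \ref{thm61}.

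The first, and main, step is to fix a projective K3 surface $\tilde S$ carrying a fixed-point-free involution $\epsilon$ (an Enriques involution) together with an automorphism $g \in {\rm Aut}\,(\tilde S)$ such that $g \circ \epsilon = \epsilon \circ g$ and $d_1(g) > 1$. Since $g^*$ preserves a connected component of the positive cone of $H^2(\tilde S, \Z)$, Proposition \ref{prop41} (1) then forces $d_1(g)$ to be a Salem number. To build such a triple one argues lattice-theoretically on $H^2(\tilde S,\Z)$, equipped with the $\epsilon^*$-invariant sublattice (isometric to $U(2)\oplus E_8(-2)$) and its orthogonal complement: one exhibits an isometry commuting with $\epsilon^*$ whose characteristic polynomial is a Salem polynomial with leading eigenvector in the invariant part — by an argument in the style of Proposition \ref{prop41} (2), or via an explicit model — and then appeals to the Torelli theorem for K3 surfaces to realise it as an automorphism commuting with a marked Enriques involution, checking positivity of a suitable ample class in the usual way. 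Put $S := \tilde S/\langle\epsilon\rangle$, an Enriques surface; then $g$ descends to $f \in {\rm Aut}\,(S)$, and since the pullback $H^2(S,\Q) \to H^2(\tilde S,\Q)^{\epsilon^*}$ is a $g^*$-equivariant isomorphism carrying the leading eigenvector, $d_1(f) = d_1(g) > 1$.

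Granting this, fix $n \ge 1$ and let $M$ be the universal cover of $S^{[n]}$, a $2n$-dimensional Calabi-Yau manifold. The automorphism $f^{[n]} \in {\rm Aut}\,(S^{[n]})$ lifts through the finite \'etale cover $M \to S^{[n]}$ to $\hat f \in {\rm Aut}\,(M)$, and by Corollary \ref{cor21} (1) applied to this generically finite equivariant map $d_1(\hat f) = d_1(f^{[n]})$; in turn, by Beauville's decomposition $H^2(S^{[n]},\Z) \simeq H^2(S,\Z) \oplus \Z(E/2)$ (valid for any surface) and Theorem \ref{thm21}, $d_1(f^{[n]}) = r_1(f^{[n]}) = \max(d_1(f),1) = d_1(g) > 1$. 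Hence $h_{\rm top}(\hat f) > 0$.

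It remains to prove $\hat f$ primitive, and the plan is to build a dominant generically finite rational map $\hat\nu : \tilde S^{[n]} \dasharrow M$ equivariant for $g^{[n]}$ and $\hat f$, then invoke Lemma \ref{lem32}. The \'etale double cover $\tilde S \to S$ induces, via the finite degree-$2^n$ morphism ${\rm Sym}^n\tilde S \to {\rm Sym}^n S$ and the Hilbert--Chow birational morphisms, a dominant rational map $\tilde S^{[n]} \dasharrow S^{[n]}$ of degree $2^n$. Being a rational map from the smooth variety $\tilde S^{[n]}$ to the projective variety $S^{[n]}$, it is defined off a closed subset of codimension $\ge 2$; as $\tilde S^{[n]}$ is simply connected, this restriction lifts through $M \to S^{[n]}$, yielding $\hat\nu$, and after replacing $\hat f$ by its composite with the deck involution of $M/S^{[n]}$ if necessary one obtains $\hat\nu \circ g^{[n]} = \hat f \circ \hat\nu$. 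Finally $g^{[n]} \in {\rm Aut}\,(\tilde S^{[n]})$ is primitive: for $n \ge 2$, $\tilde S^{[n]}$ is hyperk\"ahler and $d_1(g)$ is a Salem number, so this is Proposition \ref{prop43} (which in turn rests on Theorem \ref{thm4}); for $n=1$ it is $g$ on the K3 surface $\tilde S$, primitive by Theorem \ref{thm4}. Lemma \ref{lem32} then gives primitivity of $\hat f$. I expect the only real difficulty to be the first step — coaxing a single K3 surface into carrying simultaneously an Enriques involution and a commuting automorphism of positive entropy, the commutation relation restricting which sublattice may support the Salem isometry; once that is in hand the conclusion is a routine assembly of Proposition \ref{prop43}, Lemma \ref{lem32}, Corollary \ref{cor21}, and \cite{OS11}.
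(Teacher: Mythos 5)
Your proof is correct and follows the same overall strategy as the paper's: realize $M$ as the universal cover of the Hilbert scheme of an Enriques surface, reduce primitivity to the hyperk\"ahler Hilbert scheme of the covering K3 surface via a dominant generically finite equivariant rational map, apply Proposition \ref{prop43} (ultimately Bianco's Theorem \ref{thm4}) there, and transfer back by Lemma \ref{lem32} and Corollary \ref{cor21} (1). Two points of comparison. First, your opening step --- producing a K3 surface carrying a commuting pair consisting of an Enriques involution and a Salem automorphism --- is exactly the paper's Proposition \ref{prop71}, which is simply quoted from \cite{CO15} and \cite{Do16}; your sketch (invariant sublattice $U(2)\oplus E_8(-2)$, a Salem isometry with leading eigenvector in the invariant part, realization via Torelli) is the standard route to that result, but as written it stays at the level of a plan, so this is the one place where your argument is not self-contained. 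Second, where the paper builds the universal cover of $W^{[n]}$ concretely as a fibre product $\overline{M}\times_{W^{(n)}}W^{[n]}$ with $\overline{M}=S^n/H$ and exhibits the equivariant generically finite map $S^{[n]}\to\overline{M}\dasharrow M$ by explicit group theory (Proposition \ref{lem72}), you obtain the same map more softly: the degree-$2^n$ rational map from the K3 Hilbert scheme to the Enriques Hilbert scheme is defined off a codimension $\ge 2$ subset, its domain of definition is simply connected, so it lifts through the \'etale double cover, and since the two lifts of the induced automorphism differ by the deck involution, one of them makes the lifted map equivariant. This covering-space argument is correct and arguably cleaner, at the price of being less explicit about which lift one ends up with. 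Your remark that Corollary \ref{cor21} (2) is of no use here because $d_2(\hat f)=d_1(\hat f)^2$ on these manifolds is also accurate, and is precisely the reason Theorem \ref{thm4} is indispensable in this even-dimensional Calabi--Yau case.
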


We construct a desired Calabi-Yau manifold from an Enriques surface $W$ described in the following (see \cite{CO15}, \cite{Do16} for such Enriques surfaces and automorphisms):

\begin{proposition} \label{prop71}
There is an Enriques surface $W$ with an automorphism 
$f_W \in {\rm Aut}\, (W)$ such that $d_1(f_W) >1$. 
\end{proposition}

{\it In what follows, we choose and fix $W$ and $f_W \in {\rm Aut}\, (W)$ in Proposition \ref{prop71}.} 

Let us consider the Hilbert scheme $W^{[n]}$ and the automorphism $f_W^{[n]} \in {\rm Aut}\, (W^{[n]})$ induced by $f_W$. Let $u : M \to W^{[n]}$ be the universal cover of $W^{[n]}$, of covering degree $2$. 

Theorem \ref{thm71} will follows from the next:

\begin{proposition} \label{prop72}
\begin{enumerate}
\item $M$ is a $2n$-dimensional smooth projective Calabi-Yau manifold.
\item $M$ adimits a primitive automorphism of positive topological entropy. 
\end{enumerate}
\end{proposition}

\begin{proof} The first assertion (1) follows from \cite{OS11}, as mentioned in Remark \ref{rem51}. 

Let $\tau : S \to W$ be the universal cover of $W$. Then $S$ is a projective K3 surface and $\tau$ is an \'etale covering of degree $2$. As in Theorem \ref{thm61}, we reduce the proof of (2) to the hyperk\"ahler manifold $S^{[n]}$. 

We denote by $\iota \in {\rm Aut}\, (S)$ the covering involution of $\tau$. Let  $f_S \in {\rm Aut}\, (S)$ be any one of the two lifts of $f_W \in {\rm Aut}\, (W)$. We denote by $f_S^{[n]} \in {\rm Aut}\, (S^{[n]})$ the automorphism of $S^{[n]}$ induced by $f_S$. 

\begin{lemma} \label{lem71}
$f_S^{[n]} \in {\rm Aut}\, (S^{[n]})$ is a primitive automorphism of positive topological entropy.
\end{lemma}

\begin{proof} As $\tau$ is equivariant under $f_S$ and $f_W$, we have $d_1(f_S) = d_1(f_W)$ by Corollary \ref{cor21} (1). Thus $d_1(f_S)$ is a Salem number. Hence $f_S^{[n]}$ is primitive and of positive topological entropy by Proposition \ref{prop43}.  
\end{proof}

An essential point of proof of proposition \ref{prop72} (2) is that $S^{[n]}$, $M$ and $W^{[n]}$ are mutually related under dominant rational maps, as observed by Hayashi \cite{Ha15}. Here we briefly recall some relations from \cite[Section 2]{Ha15} with simplifications. 

Let $V^n$ (resp. $V^{(n)}$) be the product (resp. the symmetric product) of a smooth projective manifold $V$ and $\Sigma_n$ the symmetric group of degree $n$. Then, by definition, the natural morphism $\mu_V : V^n \to V^{(n)}$ is a finite Galois cover of Galois group $\Sigma_n$ and we have also the Hilbert-Chow morphism $\nu_V : V^{[n]} := {\rm Hilb}^n\, (V) \to V^{(n)}$. 

Then, in our situation, we have a natural finite morphism $q : S^n \to W^{(n)}$, which is Galois whose Galois group is
$$G := \{\iota_I\, |\, I \subset \{1, 2, \ldots, n\}\} \cdot \Sigma_n = \pi_1(W)^{n} \cdot \Sigma_n \subset {\rm Aut}\, (S^n)\,\, .$$
Here $\Sigma_n$ is the group of permutations of the $n$ factors of $S^n$ and $\iota_I \in {\rm Aut}\, (S^n)$ is defined by 
$$\iota_I((x_i)_{i=1}^{n}) = (y_i)_{i=1}^{n}\,\, ; \,\, y_i := \iota(x_i)\,\, (i \in I)\,\, ,\,\, y_j := x_j\,\, (j \not\in I)\,\, .$$ 
$f_{S^n} := (f_S, f_S, \ldots, f_S) \in {\rm Aut}\, (S^n)$ commutes with each element of $G$, as $f_S \circ \iota = \iota \circ f_S$. The group $G$ is of order $2^n \cdot n!$ with a normal subgroup $H$ of index two:
$$H := \{\iota_I\, |\, |I| \in 2\Z\} \cdot \Sigma_n \subset G\,\, .$$
Then we have finite morphisms
$$\overline{v} : S^{(n)} = S^n/\Sigma_n \to \overline{M} := S^n/H\,\, ,\,\, \overline{u} : \overline{M} \to W^{(n)} = S^n/G\,\, .$$
The first morphism is of degree $2^{n-1}$, which is not Galois if $n \ge 3$, but the second one is Galois with Galois group $G/H = \langle [\iota_{\{1\}}] \rangle \simeq \Z/2$. As noticed in \cite[Section 1]{OS11}, we have  
$$\pi_1(W^{[n]}) \simeq \pi_1(W^{(n)}) \simeq H_1(W, \Z) = \pi_1(W) = \langle \iota \rangle\,\, .$$
Here the first isomorphism is given by the Hilbert-Chow morphism and the second one is given by $[\iota_{\{1\}}] \leftrightarrow \iota$. 
Thus, $\overline{u}$ is the universal cover of $W^{(n)}$. The universal cover of $W^{[n]}$ is then given by 
$$u = \overline{u} \times_{W^{[n]}} \nu_{W} : M = \overline{M} \times_{W^{(n)}} W^{[n]} \to W^{[n]}\,\, .$$ 

The following proposition completes the proof of Proposition \ref{prop72} (2): 

\begin{proposition} \label{lem72}
At least one of the two lifts, say $f_M \in {\rm Aut}\, (M)$, of $f_{W^{[n]}}$ is primitive and of positive topological entropy. 
\end{proposition}

\begin{proof} Note that $u$ is equivariant with respect to $f_M$ and $f_{W^{[n]}}$ for any lift $f_M$ of $f_{W^{[n]}}$. 

Let $f_{W^{(n)}}$ (resp. $f_{S^{(n)}}$) be the automorphism of $W^{(n)}$ (resp. $S^{(n)}$) induced by $f_W \in {\rm Aut}\, (W)$ (resp. $f_S \in {\rm Aut}\, (S)$, any one of the two lifts of $f_W$). 
Then, by the description above, $f_{S^{(n)}}$ descends to $f_{\overline{M}} \in {\rm Aut}\, (\overline{M})$ so that $\overline{v}$ and $\overline{u}$ are equiariant with respect to $f_{S^{(n)}}$, $f_{\overline{M}}$ and $f_{W^{(n)}}$. 
In particular, 
$$\tilde{f}_M := f_{\overline{M}} \times_{f_{W^{(n)}}} f_{W^{[n]}} \in {\rm Aut}\, (M)\,\, ,$$ 
under the description $M = \overline{M} \times_{W^{(n)}} W^{[n]}$ of the universal covering space above, and $\tilde{f}_M$ is one of the two lifts of $f_{W^{[n]}}$ to $M$. We choose this $\tilde{f}_M$ as $f_M$. 
Then the natural rational map 
$$q : S^{[n]} \to \overline{M} \dasharrow M$$
is equivariant with respect to $f_{S^{[n]}}$ and $f_M$. As $q$ is a generiaclly finite dominant rational map and $f_{S^{[n]}}$ is primitive and of positive entropy, so is $f_M$ by Corollary \ref{cor21} (1) and Lemma \ref{lem32}. 
\end{proof}

This completes the proof of Proposition \ref{prop72}. \end{proof}

\begin{remark} \label{rem71}
The universal covering Calabi-Yau $2n$-folds of the Hilbert scheme $W^{[n]}$ of an Enriques surface $W$ may have other interesting geometric structures. See \cite{Ha15} for some interesting observations. 
\end{remark}

\section{Odd dimensional Calabi-Yau manifolds with primitive birational automorphisms of first dynamical degree $>1$.}

{\it Throughout this section we fix an $n$-dimensional Calabi-Yau manifold of Wehler type $M$}, i.e., a general hypersurface $M$ of multi-degree $(2, 2, \ldots, 2)$ in 
$$(\BP^1)^{n+1} = \BP_1^1 \times \ldots \times \BP_k^1 \times \ldots \BP_{n+1}^1$$ with $n \ge 3$. $M$ is indeed an $n$-dimensional Calabi-Yau manifold. 

Our main result of this section is the following:
\begin{theorem} \label{thm81}
Any odd dimensional Calabi-Yau manifold $M$ of Wehler type has a primitive birational automorphism of first dynamical degree $> 1$. 
\end{theorem}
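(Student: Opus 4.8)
The plan is to exploit the rich group of birational automorphisms of a Wehler hypersurface $M \subset (\BP^1)^{n+1}$ coming from the $n+1$ natural projections, and to combine two ingredients: the existence of a single birational automorphism with $d_1 > 1$, and a lattice-theoretic irreducibility statement that rules out any nontrivial equivariant fibration. For each $k$ the projection $p_k : M \to (\BP^1)^n$ forgetting the $k$-th factor is, by the multidegree $(2,\ldots,2)$ condition, generically $2:1$, hence induces a birational involution $\iota_k \in {\rm Bir}\,(M)$ (the Wehler involution). Let $G = \langle \iota_1, \ldots, \iota_{n+1} \rangle \subset {\rm Bir}\,(M)$. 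Since each $\iota_k$ acts on $N^1(M) = \bigoplus_{k=1}^{n+1} \Z h_k$ by a reflection (in the basis of pullbacks of the hyperplane classes $h_k$), $G$ acts on this rank-$(n+1)$ lattice, which carries the intersection form making it a hyperbolic lattice; and $\iota_k$ is isomorphic in codimension one, so by Remark \ref{rem20}(2) the representation $G \to {\rm O}(N^1(M))$ computes dynamical degrees faithfully enough that $d_1(g) = r_1(g)$ for $g$ a product of $\iota_k$'s.

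First I would record, following \cite{CO15} and the computation of the action of $\iota_k$ on $N^1(M)$, that some explicit word $g = \iota_{i_1}\cdots\iota_{i_m}$ has $r_1(g) > 1$, so that $d_1(g) > 1$; this is already known for Wehler Calabi–Yau manifolds and gives the required automorphism with positive first dynamical degree. Next, the core step: I would show that the action of $G$ on $(N^1(M))_{\R} \cong \R^{n+1}$ is irreducible. This is a finite-index/Zariski-density type statement about the reflection group generated by the $n+1$ reflections attached to the $\iota_k$; concretely one checks that no proper nonzero subspace is stable, e.g. by exhibiting enough reflection hyperplanes in general position or by a direct argument on the Gram matrix of the $h_k$. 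When $n$ is odd, $n+1$ is even, so Theorem \ref{prop41}(2) applies: there is $f \in G$ whose characteristic polynomial on $N^1(M)$ is a Salem polynomial of degree $n+1$. In particular $f$ has a real eigenvalue $\lambda > 1$ and exactly one eigenvalue of modulus $> 1$.

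Now I would translate this into a primitivity statement. Since $f$ is a composition of involutions $\iota_k$, each isomorphic in codimension one, $f$ itself is isomorphic in codimension one, so $d_1(f) = r_1(f) = \lambda > 1$ by Remark \ref{rem20}(2). Suppose $f$ were imprimitive, witnessed by an $f$-equivariant dominant rational map $\pi : M \dasharrow B$ with $0 < \dim B < \dim M$. Passing (Remark \ref{rem21}) to a resolution I may assume $\pi$ is a morphism and work in Set-up \ref{set21}; the subspace $\pi^* N^1(B)_{\R} \subset N^1(M)_{\R}$ is then a nonzero proper $f$-stable (indeed $G$-stable if one arranges $\pi$ equivariantly — but we only need $f$-stable here) subspace. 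Because $\Phi_f$ is an \emph{irreducible} Salem polynomial of degree $n+1 = \dim N^1(M)_{\R}$, the only $f$-stable subspaces of $N^1(M)_{\R}$ are $\{0\}$ and everything — a contradiction. Hence $f$ is a primitive birational automorphism of $M$ with $d_1(f) > 1$, proving the theorem.

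The main obstacle is the irreducibility of the $G$-action on $N^1(M)_{\R}$: one must verify that the reflection group generated by the $n+1$ Wehler reflections has no invariant proper subspace, which requires knowing the Gram matrix of the $h_k$ under the intersection pairing on the Wehler hypersurface and checking it is nondegenerate with no block structure respected by all reflections; a secondary point is to make sure one of the $\iota_k$-words genuinely realizes a Salem (not merely cyclotomic $\times$ Salem with empty Salem factor) eigenvalue, but this is handled uniformly by Theorem \ref{prop41}(2) once irreducibility is in hand. One should also double-check that the equivariant fibration $\pi$ can be taken so that $\pi^*N^1(B)$ lands inside $N^1(M)$ compatibly with the $f$-action after the resolution in Remark \ref{rem21}, i.e. that pulling back along the modification $\mu$ does not destroy $f$-stability — this is routine since $\mu$ is birational and $d_p$ is a birational invariant.
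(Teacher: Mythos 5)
Your first half reproduces the paper's Proposition \ref{prop81} essentially verbatim: the Wehler involutions $\iota_k$ generate ${\rm Bir}\,(M)$ as a universal Coxeter group acting orthogonally and irreducibly on the hyperbolic lattice $N^1(M)\simeq\Z^{n+1}$ (Theorem \ref{thm82}), so for $n$ odd Theorem \ref{prop41}(2) produces $f$ whose characteristic polynomial on $N^1(M)$ is an irreducible Salem polynomial of degree $n+1$, and $d_1(f)=r_1(f)>1$ because birational self-maps of a Calabi--Yau manifold are isomorphisms in codimension one. That part is correct and is exactly the paper's route.

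The primitivity step, however, has a genuine gap. You propose: resolve the indeterminacy of an equivariant fibration $\pi:M\dasharrow B$, then observe that $\pi^*N^1(B)_{\R}$ is a nonzero proper $f$-stable subspace of $N^1(M)_{\R}$, and you dismiss the compatibility check as ``routine.'' It is not. After the resolution $\mu:\tilde M\to M$ the pullback $\tilde\pi^*N^1(B)$ lives in $N^1(\tilde M)$, which is strictly larger than $N^1(M)$; the induced map $\tilde f$ is no longer an isomorphism in codimension one, $(\tilde f^n)^*\neq(\tilde f^*)^n$ there, and the irreducibility statement you proved concerns $f^*$ on $N^1(M)$ only. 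To descend you need the functoriality $f^*\circ\pi^*=\pi^*\circ f_B^*$ on $N^1(M)$ for a merely \emph{rational} $\pi$ and a merely birational $f_B$, and this is precisely the kind of identity that fails for compositions of rational maps (it is why dynamical degrees are defined by limits in the first place). The paper avoids this by never blowing up $M$: using the movable cone theorem (Theorem \ref{thm82}(3)) it replaces $\pi$, up to conjugating $f$ by some $g\in{\rm Bir}\,(M)$, by a genuine morphism $\Phi_{|mL|}:M\to B$ given by a free linear system, and only then does the $f$-stable-subspace contradiction go through (Lemma \ref{lem80}). Moreover, that replacement is only achieved for fibrations whose general fibers have Kodaira dimension $0$ (via $K_M=0$ and \cite[Theorem 5.11]{Ue75}); to handle an arbitrary equivariant fibration the paper must separately exclude fibers of general type, using Zariski-density of generic $f$-orbits via \cite[Th\'eor\`eme 4.1]{AC13} and \cite{Bi16} (Lemma \ref{lem81}), and then pass to the relative Iitaka fibration to reduce the general case to the $\kappa=0$ case (Lemma \ref{lem82}). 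This entire reduction --- which is the real content of Proposition \ref{prop82} --- is absent from your argument, so as written the proof only rules out a restricted class of fibrations, and even those not rigorously.
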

Theorem \ref{thm2} (4) follows from Theorems \ref{thm81} and \ref{thm71}. Our proof of Theorem \ref{thm81} is much inspired by arguments in \cite{Bi16}, and is based on Theorem \ref{thm82} below (the main theorem of \cite{CO15}).

For the statement of Theorem \ref{thm82}, we recall that 
$$N^1(M) \simeq {\rm Pic}\, M = \oplus_{k=1}^{n+1} \Z h_k \simeq \Z^{n+1}
\,\, , \,\, \overline{{\rm Amp}}\,(M) = \oplus_{k=1}^{n+1} \R_{\ge 0} h_k\,\, .$$
Here $h_k$ is the pullback of the hyperplane class of $\BP_k^1$ under the natural projection $M \to \BP_k^1$. As $f : M \dasharrow M$ is isomorphic in codimension one for $f \in {\rm Bir}\, (M)$ (see eg. \cite{Ka08}, which is also very crucial in the proof of Theorem \ref{thm82}), we have a natural contravariant representation 
$$\rho : {\rm Bir}\, (M) \to {\rm GL}\, (N^1(M)) \simeq {\rm GL}\, (\Z^{n+1})\,\, ; \,\, f \mapsto f^*\,\, .$$
Let $p_k : M \to P_k$ ($1 \le k \le n+1$) be the natural projection, where $P_k$ is the product manifold $(\BP^1)^{n}$ obtained by removing the $k$-th factor of $(\BP^1)^{n+1}$. The morphism $p_k$ is of degree $2$ and the covering involution $\iota_k$ of $p_k$ is a (not biregular) birational automorphism of $M$ of order two. 
\begin{theorem} \label{thm82} Let $M$ be a Calabi-Yau manifold of Wehler type of dimension $\ge 3$. Then:

\begin{enumerate}

\item ${\rm Aut}\, (M) = \{id\}$, ${\rm Bir}\, (M) = \langle \iota_1, \ldots, \iota_{n+1} \rangle$ and $({\rm Bir}\, (M), \{\iota_k\}_{k=1}^{n+1})$ forms a Coxeter group isomorphic to the universal Coxeter group $UC(n+1)$. 

\item The representation $\rho$ is faithful and coincides with the dual of the geometric representation of the universal Coxter group $({\rm Bir}\, (M), \{\iota_k\}_{k=1}^{n+1}) \simeq UC(n+1)$. In particular, there is an integral non-degenerate bilinear form $(*, **)$ on $N^1(M) \simeq \Z^{n+1}$ of signature $(1, n)$, with respect to which the action of ${\rm Bir}\, (M)$ is orthogonal and $\iota_k$ is an othogonal reflection with respect to the hypersurface $\oplus_{j \not= k} \R h_j$ (see \cite[Section 2.2.2]{CO15} for the explicit description of $(*, **)$). 

\item For any effective movable divisor $D$, there are a nef divisor $H$ and $g \in {\rm Bir}\, (M)$ such that $g^*D = H$ in $N^1(M)$. In particular, $H$ is semi-ample and the image of $\Phi_{|mH|}$ for large $m >0$ is a smooth projective variety, which is either $(\BP^1)^{k}$ ($0 \le k \le n-1$) or $M$ (see the description of the nef cone $\overline{{\rm Amp}}\,(M)$ above). 
\end{enumerate}
\end{theorem}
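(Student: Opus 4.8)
The plan is to reduce all three assertions to the explicit action of the covering involutions $\iota_k$ on $N^1(M)$ together with the hyperbolic geometry of the universal Coxeter group. \emph{First} I would compute $\iota_k^*$ on $N^1(M)=\oplus_j\Z h_j$. Since $\iota_k$ is the covering involution of the degree-two morphism $p_k\colon M\to P_k=(\BP^1)^n$ and the classes $h_j$ ($j\ne k$) are pulled back from $P_k$, one has $\iota_k^*h_j=h_j$ for $j\ne k$; and since the branch divisor of $p_k$ is the fibrewise discriminant, of class $2\sum_{j\ne k}h_j$, the trace relation of the double cover gives $h_k+\iota_k^*h_k=2\sum_{j\ne k}h_j$. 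Hence
$$\iota_k^*h_k=-h_k+2\sum_{j\ne k}h_j\,\, ,\,\, \iota_k^*h_j=h_j\ (j\ne k)\,\, .$$
Thus $\iota_k^*$ fixes the hyperplane $\oplus_{j\ne k}\R h_j$ pointwise and negates $r_k:=h_k-\sum_{j\ne k}h_j$. Requiring $(r_k,h_j)=0$ for all $j\ne k$, together with the symmetry of the construction in the $n+1$ factors, forces up to scale the Gram matrix $(h_i,h_i)=2-n$, $(h_i,h_j)=1$ ($i\ne j$); its eigenvalues are $2$ (simple) and $1-n$ (multiplicity $n$), so $(*,**)$ is non-degenerate of signature $(1,n)$ and each $\iota_k^*$ is precisely the orthogonal reflection in $r_k^{\perp}=\oplus_{j\ne k}\R h_j$. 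This already produces the bilinear form in (2) (matching \cite[Section 2.2.2]{CO15}).

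\emph{Next}, a direct transpose computation identifies $\rho(\iota_k)$ with the reflection $s_k$ of the \emph{dual} geometric representation of the abstract system $\langle s_1,\dots,s_{n+1}\mid s_k^2=1\rangle$ under $h_k\leftrightarrow\alpha_k^{*}$: dualising the relations above yields $s_k(\alpha_k)=-\alpha_k$ and $s_k(\alpha_j)=\alpha_j+2\alpha_k$ ($j\ne k$), so the simple-root Gram matrix has $B(\alpha_k,\alpha_k)=1$ and $B(\alpha_i,\alpha_j)=-1$ ($i\ne j$). This is exactly the Coxeter datum of the \emph{universal} Coxeter group $UC(n+1)$ (all $m_{ij}=\infty$). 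By Tits' theorem the geometric representation of a Coxeter group is faithful, whence $W:=\langle\iota_1,\dots,\iota_{n+1}\rangle\cong UC(n+1)$ and $\rho|_W$ is faithful; moreover the Tits cone tiles the forward positive cone $\{x:(x,x)>0\}$ with the simplicial nef cone $\overline{{\rm Amp}}(M)=\oplus_k\R_{\ge0}h_k$ as closed fundamental chamber and the mirrors $r_k^{\perp}$ as walls.

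\emph{Then} I would prove ${\rm Bir}(M)=W$ and ${\rm Aut}(M)=\{{\rm id}\}$. For the latter, any $\phi\in{\rm Aut}(M)$ preserves $\overline{{\rm Amp}}(M)$ and permutes its extremal rays, so $\phi^*h_k=h_{\sigma(k)}$ for some permutation $\sigma$; hence $\phi$ is induced by an automorphism of $(\BP^1)^{n+1}$ permuting the factors by $\sigma$ and acting projective-linearly on each, and no nontrivial such automorphism preserves a general $(2,\dots,2)$-hypersurface, so $\phi={\rm id}$. For ${\rm Bir}(M)=W$, every $f\in{\rm Bir}(M)$ is an isomorphism in codimension one (as $K_M\equiv0$; \cite{Ka08}), so $f^*$ preserves $N^1(M)$ and the movable cone $\overline{{\rm Mov}}(M)$; since the shape of this Lorentzian cone determines $(*,**)$ up to scale, and integrality pins the scale, $f^*\in{\rm O}(N^1(M))$. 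Thus $f^*$ sends the chamber $\overline{{\rm Amp}}(M)$ to another chamber of the tiling, so there is a unique $w\in W$ with $f^*\overline{{\rm Amp}}(M)=w^*\overline{{\rm Amp}}(M)$; then $(w^{-1}f)^*$ fixes $\overline{{\rm Amp}}(M)$, hence fixes an ample class, forcing $w^{-1}f\in{\rm Aut}(M)=\{{\rm id}\}$. Therefore $f=w\in W$, which gives (1) and, with the Coxeter step, the faithfulness in (2).

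\emph{Finally}, (3) is a reformulation of the same fundamental-domain fact: as $\overline{{\rm Amp}}(M)$ is a fundamental domain for $W={\rm Bir}(M)$ acting on $\overline{{\rm Mov}}(M)$, any effective movable $D$ admits $g\in{\rm Bir}(M)$ with $g^*D=:H\in\overline{{\rm Amp}}(M)$. Each $h_k$ is base-point-free, being the pull-back of a point under the $k$-th projection $M\to\BP^1_k$; hence every nef $H=\sum_{k\in T}c_kh_k$ ($c_k>0$) is semi-ample and $\Phi_{|mH|}$ is the product of the projections indexed by $T$, whose image is a product of projective lines, or $M$ itself. \textbf{The main obstacle} is precisely the chamber-tiling input invoked above, namely that the $W$-translates of $\overline{{\rm Amp}}(M)$ exhaust $\overline{{\rm Mov}}(M)$ (a Morrison--Kawamata-type statement). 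I would establish it for Wehler $M$ directly, by showing that, up to the $W$-action, the only fibre- and divisorial-type contractions of the (isomorphic in codimension one) minimal models of $M$ are the explicit projections $p_k$, so that crossing each mirror $r_k^{\perp}$ is realised geometrically by the involution $\iota_k$; this rigidity is where the genericity of $M$ within the $(\BP^1)^{n+1}$-linear system is essential.
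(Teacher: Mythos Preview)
The paper does not supply its own proof of this statement: Theorem~\ref{thm82} is quoted verbatim as ``the main theorem of \cite{CO15}'' and used as a black box in the proof of Theorem~\ref{thm81}. So there is nothing in the paper to compare your argument against line by line.

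That said, your sketch is essentially the argument of \cite{CO15}. The computation $\iota_k^*h_k=-h_k+2\sum_{j\ne k}h_j$ via $h_k+\iota_k^*h_k=p_k^*p_{k*}h_k$ and the resulting Gram matrix $(h_i,h_i)=2-n$, $(h_i,h_j)=1$ are exactly those of \cite[Section~2.2.2]{CO15}; the identification of $\rho$ with the dual geometric representation of $UC(n+1)$ and the appeal to Tits' faithfulness theorem are likewise standard and correct. Your argument for ${\rm Aut}(M)=\{id\}$ (permutation of the extremal rays of the simplicial nef cone, then genericity of $M$ in its linear system) is the right one.

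You have correctly isolated the genuine difficulty: the statement that the ${\rm Bir}(M)$-translates of $\overline{{\rm Amp}}(M)$ tile $\overline{{\rm Mov}}(M)$ (equivalently, that the Kawamata--Morrison movable cone conjecture holds for Wehler $M$). This is the substantive content of \cite{CO15}, and it is \emph{not} a consequence of the Coxeter combinatorics alone---the Tits cone of $UC(n+1)$ a priori only tells you that the $W$-translates of the fundamental chamber tile \emph{some} convex cone inside the positive cone, not that this cone coincides with $\overline{{\rm Mov}}(M)$. Your proposed route (classify all small contractions and flops of models of $M$ and show each wall-crossing is realised by some $\iota_k$) is indeed how \cite{CO15} proceeds, using Kawamata's result \cite{Ka08} that Calabi--Yau minimal models are connected by flops, together with an explicit analysis of the base loci on a general $(2,\dots,2)$ hypersurface. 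Until that step is actually carried out, your argument for ${\rm Bir}(M)=W$ and for part~(3) remains a plan rather than a proof; but the plan is the right one.
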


The following proposition is crucial for Theorem \ref{thm81}:
\begin{proposition} \label{prop81}
Assume that $M$ is a Calabi-Yau manifold of Wehler type of odd dimension $n = 2k+1 \ge 3$. Then, there is $f \in {\rm Bir}\, (M)$ such that the characteristic polynomial of $f^*|N^1(M)$ is irreducible over $\Z$ and it is a Salem polynomial of degree $n+1 = {\rm rank}\, N^1\, (M)$. In particular, there is no non-trivial $f$-stable $\Q$-linear subspace of 
$N^1(M)_{\Q} := N^1(M) \otimes \Q$. 
\end{proposition}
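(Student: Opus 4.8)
The plan is to apply Theorem~\ref{prop41}(2) to the lattice $L := N^1(M)$ equipped with the integral non-degenerate bilinear form $(*,**)$ of signature $(1,n)$ provided by Theorem~\ref{thm82}(2), and to the group $G := \rho({\rm Bir}\,(M)) \subset {\rm O}(L)$. Since $n = 2k+1$ is odd, the rank of $L$ is $n+1 = 2k+2$, which is even, so the parity hypothesis of Theorem~\ref{prop41}(2) is satisfied. The action of ${\rm Bir}\,(M)$ on $L$ is orthogonal by Theorem~\ref{thm82}(2), and $\rho$ is faithful, so $G$ is a genuine subgroup of ${\rm O}(L)$ isomorphic to $UC(n+1)$. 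The only nontrivial input needed is the \emph{irreducibility} of the $G$-action on $L_{\R}$: once that is established, Theorem~\ref{prop41}(2) immediately yields $f \in {\rm Bir}\,(M)$ whose characteristic polynomial $\Phi_{f^*}(x)$ on $L_{\R}$ — equivalently on $N^1(M)$, since $f^*$ is defined over $\Z$ — is a Salem polynomial of degree $n+1 = {\rm rank}\,N^1(M)$. Irreducibility of the Salem polynomial over $\Z$ is part of the definition of Salem polynomial (Definition~\ref{def41}), so the final sentence about there being no nontrivial $f$-stable $\Q$-subspace of $N^1(M)_{\Q}$ follows formally: a proper $f$-invariant subspace would force the characteristic polynomial to factor nontrivially over $\Q$.

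So the heart of the matter is to show that the dual geometric representation of $UC(n+1)$ on $L_{\R}$ is irreducible. I would argue directly with the reflections $s_k := \iota_k^*$. Recall $s_k$ is the orthogonal reflection in the hyperplane $\oplus_{j \neq k}\R h_j$, so $s_k$ fixes each $h_j$ with $j \neq k$ and sends $h_k$ to $-h_k + (\text{a combination of the other } h_j)$; concretely, from the explicit form in \cite[Section 2.2.2]{CO15}, $s_k(h_k) = -h_k + \sum_{j\neq k} 2(h_j,h_k)/(h_k,h_k)\cdot h_j$ up to the normalization there, and in the Wehler case the off-diagonal pairings are all positive. Suppose $0 \neq W \subsetneq L_{\R}$ is a $G$-stable subspace. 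For any $k$, $W$ is $s_k$-stable; since $s_k$ has eigenvalues $1$ (with multiplicity $n$) and $-1$ (with multiplicity $1$), $W$ either contains the $(-1)$-eigenline $\R\cdot v_k$ of $s_k$ or is contained in the fixed hyperplane $H_k = \oplus_{j\neq k}\R h_j$. If $W \subseteq H_k$ for all $k$, then $W \subseteq \bigcap_k H_k = 0$, a contradiction; so $W$ contains $v_k$ for some $k$. Then, applying the other reflections $s_m$ ($m \neq k$) to $v_k$ and using that the pairings $(h_k,h_m)$ are nonzero, one checks that the $G$-orbit of $v_k$ spans all of $L_{\R}$ — equivalently, that no proper subspace contains $v_k$ and is $G$-stable. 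This step is where one uses that \emph{all} the entries of the Gram matrix are nonzero (a feature special to the multi-degree $(2,2,\dots,2)$ geometry), which prevents the coordinate subspaces or their sums from being $G$-invariant.

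The main obstacle I anticipate is making the orbit-spanning argument clean and uniform in $n$. A naive approach computes $s_m(v_k)$ and tries to extract new basis vectors one at a time, but bookkeeping the iterated images can get messy. A slicker route: let $W$ be the smallest nonzero $G$-stable subspace (it exists since the family of $G$-stable subspaces is finite-dimensional and closed under intersection, and $W$ is then $G$-irreducible). Its orthogonal complement $W^\perp$ is also $G$-stable; if the form restricted to $W$ were nondegenerate we would get $L_{\R} = W \oplus W^\perp$, and then either $W$ or $W^\perp$ would be a $G$-stable subspace not containing any $v_k$ — impossible by the hyperplane-intersection argument above, unless $W = L_{\R}$. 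The remaining case is that $(*,**)|_W$ is degenerate, i.e. $W \cap W^\perp \neq 0$; but $W \cap W^\perp$ is $G$-stable and contained in $W$, so by minimality $W \cap W^\perp = W$, meaning $W$ is totally isotropic, hence $\dim W \le \lfloor (n+1)/2 \rfloor < n+1$ and the form has a nontrivial radical on the $G$-stable subspace $W + W^\perp \neq L_\R$ — I would then rule this out using that $(*,**)$ is \emph{non-degenerate} on all of $L_{\R}$ together with hyperbolicity (signature $(1,n)$): a $G$-invariant totally isotropic subspace in a hyperbolic space is at most a line, and a $G$-invariant isotropic line would be fixed up to scaling by the whole universal Coxeter group, contradicting that two distinct reflections $s_k, s_m$ already generate an infinite dihedral group acting without a common isotropic eigenline when $(h_k,h_m)\neq 0$. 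This case analysis is routine but must be written carefully; everything else is formal once Theorem~\ref{prop41}(2) and Theorem~\ref{thm82} are invoked.
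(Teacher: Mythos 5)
Your proposal is correct and follows the paper's own route exactly: equip $N^1(M)$ with the hyperbolic form from Theorem~\ref{thm82}~(2), note that $n+1$ is even, verify that the universal Coxeter group $\langle\iota_1^*,\dots,\iota_{n+1}^*\rangle$ acts irreducibly on $N^1(M)_{\R}$, and invoke Theorem~\ref{prop41}~(2). The paper merely asserts the irreducibility in one line; your first, direct argument (a stable subspace not contained in every fixed hyperplane $\oplus_{j\neq k}\R h_j$ must contain some $(-1)$-eigenvector $v_k$, and then $s_m(v_k)-v_k\in\R v_m\setminus\{0\}$ forces all $v_m$ into it) already completes that step cleanly, so the second detour through minimal invariant subspaces and isotropic lines is unnecessary.
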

\begin{proof}
By Theorem \ref{thm82} (2), we may and will regard ${\rm Bir}\, (M) \subset {\rm O}(N^1(M)_{\R})$ with respect to the bilinear form $(*, **)$.
By ${\rm Bir}\, (M) = \langle \iota_k \rangle_{k=1}^{n+1}$ and the description of $\iota_k^*$ on $N^1(M)$ in Theorem \ref{thm82} (2), the action of ${\rm Bir}\, (M)$ on $N^1(M)_{\R}$ is irreducible. Thus the result follows from Theorem \ref{prop41} (2). 
\end{proof}

Theorem \ref{thm81} will follow from the next:

\begin{proposition} \label{prop82}
Let $M$ and $f \in {\rm Bir}\, (M)$ be as in Proposition \ref{prop81}. Then 
$f$ is primitive and $d_1(f) > 1$. 
\end{proposition}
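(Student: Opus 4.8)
The plan is to deduce both conclusions from the spectral information obtained in Proposition \ref{prop81}. First I would address $d_1(f) > 1$. By Proposition \ref{prop81}, the characteristic polynomial of $f^* | N^1(M)$ is a Salem polynomial $P(x)$ of degree $n+1$; since $f$ is isomorphic in codimension one (Theorem \ref{thm82}, using \cite{Ka08}), Remark \ref{rem20} (2) gives $(f^n)^* = (f^*)^n$ on $N^1(M)$, hence $d_1(f) = r_1(f)$ equals the spectral radius of $f^* | N^1(M)$. A Salem polynomial has a real root $a > 1$ by definition, so $d_1(f) = a > 1$. (Equivalently, by Theorem \ref{prop41} (1) the action preserves the hyperbolic form $(*,**)$ and lies in ${\rm O}^+$, so its spectral radius is the Salem number $a$.)

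Next I would prove primitivity. Suppose for contradiction that $f$ is imprimitive, so there is an $f$-equivariant dominant rational map $\pi : M \dasharrow B$ with connected fibers, $0 < \dim B < \dim M = n$, and an induced $f_B \in {\rm Bir}\, (B)$. The key is to produce, from $\pi$, a nontrivial proper $f^*$-stable subspace of $N^1(M)_{\Q}$, contradicting the last sentence of Proposition \ref{prop81}. After replacing $M$ by a resolution of indeterminacy of $\pi$ as in Remark \ref{rem21} (which changes neither $d_p$ nor the conjugacy class of the $f^*$-action up to the modifications tracked there, and in particular does not affect the representation on $N^1$ since $f$ is isomorphic in codimension one), we may take $\pi$ to be a genuine morphism; then $\pi^* : N^1(B)_{\Q} \to N^1(M)_{\Q}$ is injective and its image is $f^*$-stable because $\pi \circ f = f_B \circ \pi$ gives $f^* \circ \pi^* = \pi^* \circ f_B^*$. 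Since $0 < \dim B$ we have $N^1(B)_{\Q} \neq 0$, and since $\dim B < \dim M$ together with the bigness of a pullback ample class forcing — more carefully — one needs the image to be a proper subspace; here I would argue that $\pi^* N^1(B)_{\Q} = N^1(M)_{\Q}$ would force the general fiber of $\pi$ to be a point (as every divisor class would be pulled back from $B$, so would have zero intersection with fiber curves), contradicting $\dim B < \dim M$. Thus $\pi^* N^1(B)_{\Q}$ is a nontrivial proper $f^*$-stable subspace, contradicting the irreducibility statement in Proposition \ref{prop81}.

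There is a subtlety to handle: the resolution in Remark \ref{rem21} replaces $(M,f)$ by $(\tilde M, \tilde f)$ with $\tilde f$ only birational, and a priori the representation on $N^1(\tilde M)$ is larger than on $N^1(M)$. I would instead run the argument directly on the original $M$ by taking a common resolution $W$ dominating both $M$ and a model where $\pi$ is a morphism, push forward and pull back divisor classes, and observe that the exceptional divisors contribute an $f^*$-stable complement so that the original $N^1(M)_{\Q}$ still receives an $f^*$-stable subspace coming from $\pi^* N^1(B)_{\Q}$; alternatively, and more cleanly, I would note that since $f$ is isomorphic in codimension one the relevant invariant subspace descends to $N^1(M)_{\Q}$ itself, because a movable class pulled back from $B$ remains well-defined on $M$. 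Either way the conclusion is the same contradiction. The main obstacle is precisely this bookkeeping — making sure the $f^*$-stable subspace produced by imprimitivity genuinely lives inside $N^1(M)_{\Q}$ (rank $n+1$), where irreducibility has been established, rather than inside the Néron–Severi group of some auxiliary resolution; once that is pinned down the argument is immediate.
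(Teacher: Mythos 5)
The first half of your argument ($d_1(f)>1$ via isomorphism in codimension one, Remark \ref{rem20}~(2), and the Salem root $a>1$) is correct and is exactly the paper's argument. The primitivity half, however, has a genuine gap. You assume that an \emph{arbitrary} $f$-equivariant dominant rational map $\pi: M \dasharrow B$ with $0<\dim B<\dim M$ produces an $f^*$-stable subspace $\pi^*N^1(B)_{\Q}\subset N^1(M)_{\Q}$. The needed equivariance $f^*\circ\pi^*=\pi^*\circ f_B^*$ is not justified when $\pi$ is only rational: pullback of divisor classes along rational maps is not functorial (compare $(c\circ c)^*\neq c^*\circ c^*$ for the Cremona involution), and the fact that $f$ does not contract divisors does not by itself repair a composition involving the rational map $\pi$ and the merely birational $f_B$. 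Your proposed fixes do not close this: after resolving indeterminacy the lifted $\tilde f$ on $\tilde M$ is in general no longer an isomorphism in codimension one (it can contract the new exceptional divisors), so $(\tilde f^n)^*\neq(\tilde f^*)^n$ and stable-subspace reasoning breaks on $N^1(\tilde M)$; the exceptional classes need not span an $\tilde f^*$-stable complement; and ``the movable class pulled back from $B$ remains well-defined on $M$'' gives a well-defined class but not the intertwining identity you need inside $N^1(M)_{\Q}$, which is the only space where Proposition \ref{prop81} gives irreducibility. If this naive argument worked, the main theorem of \cite{Bi16} for hyperk\"ahler manifolds would also be nearly immediate, which it is not.

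The paper closes exactly this gap with three additional steps that your proposal omits. First (Lemma \ref{lem80}), for fibrations whose general fibers have Kodaira dimension $0$, it uses the movable cone theorem for Wehler manifolds (Theorem \ref{thm82}~(3)) to replace $\pi$, after conjugating $f$ by some $g\in{\rm Bir}(M)$, by an honest morphism $\Phi_{|mL|}$ onto a \emph{smooth} base; the hypothesis $\kappa(\tilde F)=0$ enters through Ueno's theorem to identify $\dim B$ with $\kappa(M,L)$, and only then is $\pi^*N^1(B)_{\Q}$ shown to be a nonzero $f$-stable subspace, contradicted by restriction to a general fiber. Second (Lemma \ref{lem81}), fibrations with general-type fibers are excluded via Zariski density of general orbits, using \cite[Th\'eor\`eme 4.1]{AC13} and \cite{Bi16}. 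Third (Lemma \ref{lem82}), an arbitrary equivariant fibration is reduced to the Kodaira-dimension-$0$ case by passing to the relative Iitaka fibration, using adjunction and Lemma \ref{lem81} to keep the base dimension intermediate. Without these reductions your contradiction with Proposition \ref{prop81} is not reached, so the proposal as written does not prove primitivity.
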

\begin{proof}
As $M$ is a smooth Calabi-Yau manifold, $f$ is isomorphic in codimension one. 
Thus $d_1(f) = r_1(f) > 1$ by Remark \ref{rem20} (2) and by the choice of $f$. 

We show that $f$ is primitive. First, we observe the following:

\begin{lemma} \label{lem80}
There are no smooth projective variety $B$ with a dominant rational map $\pi : M \dasharrow B$ of connected fibers and $f_B \in {\rm Bir}\, (B)$ such that $\pi \circ f = f_B \circ \pi$, $0 < \dim\, B < \dim\, M$ and resolutions of general fibers of $\pi$ are of Kodaira dimension $0$. 
\end{lemma}

\begin{proof} Assuming to the contrary that there are such $\pi : M \dasharrow B$ and $f_B \in {\rm Bir}\, (B)$, we derive a contradiction. 

Let $\nu : \tilde{M} \to M$ be a resolution of the indeterminacy $I(\pi)$ of $\pi$ and $\tilde{\pi} : \tilde{M} \to B$ the induced morphism. Let $H$ be a very ample divisor on $B$. Then, by definition, $\pi^*H = \nu_*(\tilde{\pi}^*H)$ is a movable divisor on $M$. By Theorem \ref{thm82} (3), there is $g \in {\rm Bir}\, (M)$ such that $L := g^*(\pi^*H)$ is nef. Note that $|L|$ is free by the first part of Theorem \ref{thm82} (3). So, by replacing $f$ by $g \circ f \circ g^{-1}$ and $\pi$ by $\pi \circ g^{-1}$, we may and will assume from the first that $\pi : M \dasharrow B$ is given by a linear subsystem $\Lambda$ of a free linear system $|L|$, i.e., $\pi = \Phi_{\Lambda}$. In this replacement, we also note that $g \circ f \circ g^{-1}$ has the same characteristic polynomial as $f$ on $N^1(M)$. 

As $K_M = 0$ and $\kappa(\tilde{F}) = 0$ by our assumption, we have $\dim\, B = \kappa(M, L)$ by \cite[Theorem 5.11]{Ue75} (see also \cite[Th\'eor\`eme 2.3]{AC13}). As $\Lambda \subset |L|$, the morphism $\Phi_{|L|} : M \to B'$ given by the complete free linear system $|L|$ factors through $\pi$ and we have a dominant rational map $q : B' \dasharrow B$ such that $\pi = q \circ \Phi_{|L|}$. We have then $\dim\, B' = \dim\, B$ by $\kappa(M, L) = \dim\, B$. Hence $q$ is birational, as $\pi$ has connected fibers.  

So, by replacing $\pi$ by $\Phi_{|mL|}$ for large $m >0$, we may and will assume that 
$$B = {\rm Proj}\,\oplus_{k\ge 0} H^0(M, \sO_M(kL))$$ 
and $\pi : M \to B$ is a morphism. As $f$ is isomorphic in codimension one, the pullback $f^*$ then naturally induces an isomorphism 
$$f_* : B = {\rm Proj}\,\oplus_{k\ge 0} H^0(M, \sO_M(kL)) \simeq {\rm Proj}\, \oplus_{k\ge 0} H^0(M, \sO_M(kf^*L)) = B\,\, ,$$
i.e., $f_B := f_* \in {\rm Aut}\, (B)$. This $f_B$ satisfies $\pi \circ f = f_B \circ \pi$. As $f$ is isomorphic in codimension one and $f_B$ and $\pi$ are {\it morphisms}, it follows that $0 \not= \pi^*N^1(B)_{\Q} \subset N^1(M)_{\Q}$ is $f$-stable. Hence $\pi^*N^1(B)_{\Q} = N^1(M)_{\Q}$, by the irreduciblity of the action of $f$. However, this is impossible, as $\pi^*N^1(B)_{\Q}|F = 0$ but $N^1(M)_{\Q}|F \not= 0$ for general fibers $F$ of the morphism $\pi$. 
\end{proof}

From now, we closely follow an argument in \cite{Bi16}. 

\begin{lemma} \label{lem81}
Let $P \in M$ be a general point. Then, the Zariski closure of the orbit $\{f^n(P)\, |\, n \in \Z\}$ is $M$. In particular, there is no dominant rational map $\pi : M \dasharrow B_1$ of connected fibers and $f_{B_1} \in {\rm Bir}\, (B_1)$ such that $\pi_1 \circ f = f_{B_1} \circ \pi_1$, $0 < \dim\, B_1 < \dim\, M$ and resolutions of general fibers of $\pi_1$ are of general type.
\end{lemma}

\begin{proof} If otherwise, there are smooth projective variety $C$ and a domonant rational map $\tau : M \dasharrow C$ such that $\tau \circ f = \tau$ by \cite[Th\'eor\`eme 4.1]{AC13}. As $f$ is isomorphic in codimension one, 
it follows that $f^*v = v$ for all $v \in \tau^*N^1(C)$ by \cite[Lemma 4.5]{Bi16}, a contradiction to the irreducibility of $f^*$ on $N^1(M)$. 

We show the last statement. If we would have a rational map $\pi_1 : M \dasharrow B_1$ as in Lemma \ref{lem81}, then 
$\{f_{B_1}^n(Q)\, |\, n \in \Z\}$ would be Zariski dense in $B_1$ for generic $Q \in B_1$, as so is for $f$ and $\pi_1$ is dominant. The last statement now follows from \cite[Proposition]{Bi16}.
\end{proof}

\begin{lemma} \label{lem82}
Assume that there are a smooth projective variety $B_2$, a dominant rational map $\pi_2 : M \dasharrow B_2$ of connected fibers and $f_{B_2} \in {\rm Bir}\, (B_2)$ such that $\pi_2 \circ f = f_{B_2} \circ \pi_2$ and 
$0 < \dim\, B_2 < \dim\, M$. 
Then, there are a smooth projective variety $B_3$, a dominant rational map $\pi_3 : M \dasharrow B_3$ of connected fibers and $f_{B_3} \in {\rm Bir}\, (B_3)$ such that $\pi_3 \circ f = f_{B_3} \circ \pi_3$, $0 < \dim\, B_3 < \dim\, M$ and resolutions of general fibers of $\pi_3$ are of Kodaira dimension $0$. 
\end{lemma}

\begin{proof} Resolutions of general fibers of $\pi_2$ are of non-negative Kodaira dimension, by the adjunction formula applied for a resolution of the indeterminacy of $\pi_2$. So, one can consider the relative Kodaira fibration $\pi_3 : M \dasharrow B_3$ over $B_2$ and $\pi_3$ satisfies all requirement except perhaps $0 < \dim\, B_3 < \dim\, M$. We have also $0 < \dim\, B_3 < \dim\, M$, as resolutions of general fibers of $\pi_2$ are not of general type by Lemma \ref{lem81}. 
\end{proof}

By Lemmas \ref{lem80} and \ref{lem82}, $f$ has to be primitive. \end{proof}

\end{document}